\newif\ifonlineapp
\newcommand{\blue}{\color{black}}
\newcommand{\blue}{\color{blue}}
\def\hx{\hat x}
\newtheorem{Lemma}{Lemma} 
\newtheorem{Theorem}{Theorem} 
\newtheorem{Corollary}{Corollary}
\newtheorem{assumption}{Assumption}
\title{A Two-Timescale Stochastic Algorithm Framework for Bilevel Optimization: Complexity Analysis and Application to Actor-Critic}
\author{%
	Mingyi  Hong\thanks{Authors listed in alphabetical order.}\thanks{University of Minnesota,
		\texttt{email:mhong@umn.edu}} \qquad   \quad  
	Hoi-To  Wai
	\thanks{The Chinese University of Hong Kong,
		\texttt{email:htwai@se.cuhk.edu.hk}} \qquad  \quad  
	Zhaoran Wang\thanks{Northwestern University,  \texttt{email:zhaoranwang@gmail.com}}\qquad   \quad     
	Zhuoran Yang\thanks{Princeton University, \texttt{email:zy6@princeton.edu}}
}
\author{Mingyi Hong, Hoi-To  Wai, Zhaoran Wang and Zhuoran Yang}
\newcommand{\acm}{\textsf{TT-NAC}}
\newcommand{\beqq}{\begin{equation}}
\newcommand{\eeqq}{\end{equation}}
\let\hat\widehat
\let\tilde\widetilde
    \def\multilimits@{\bgroup
  \Let@
  \restore@math@cr
  \default@tag
 \baselineskip\fontdimen10 \scriptfont\tw@
 \advance\baselineskip\fontdimen12 \scriptfont\tw@
 \lineskip\thr@@\fontdimen8 \scriptfont\thr@@
 \lineskiplimit\lineskip
 \vbox\bgroup\ialign\bgroup\hfil$\m@th\scriptstyle{##}$\hfil\crcr}
    \def\Sb{_\multilimits@}
    \def\endSb{\crcr\egroup\egroup\egroup}
\DeclareRobustCommand*\cal{\@fontswitch\relax\mathcal}
\begin{document}
	
	\maketitle
	
	\begin{abstract}
		This paper analyzes a two-timescale stochastic algorithm framework for bilevel optimization. 
		Bilevel optimization is a class of problems which exhibits a two-level structure, and its goal is to minimize an outer objective function with variables which are constrained to be the optimal solution to an (inner) optimization problem.
		We consider the case when the inner problem is unconstrained and strongly convex, while the outer problem is constrained and has a smooth objective function. We propose a two-timescale stochastic approximation ({\sf TTSA}) algorithm for tackling such a bilevel  problem. In the algorithm, a stochastic gradient update with a larger step size is used for the inner problem, while a projected stochastic gradient  update with a smaller step size is used for the outer problem. 
		We analyze the convergence rates for the {\sf TTSA} algorithm under various settings: when the outer problem is strongly convex (resp.~weakly convex), the {\sf TTSA} algorithm finds an $\mathcal{O}(K_{\max}^{-2/3})$-optimal  (resp.~$\mathcal{O}(K_{\max}^{-2/5})$-stationary) solution, where $K_{\max}$ is the total iteration number. 
		As an application, we show that a two-timescale natural actor-critic proximal policy optimization algorithm can be viewed as a special case of our {\sf TTSA} framework. Importantly, the natural actor-critic algorithm is shown to converge at a rate of $\mathcal{O}(K_{\max}^{-1/4})$ in terms of the gap in expected discounted reward compared to a global optimal policy. 
	\end{abstract}

\ifonlineapp
\else	
\everydisplay{\small}
\fi
	
\setlength\abovedisplayskip{6pt}
\setlength\belowdisplayskip{6pt}
	
\section{Introduction}\vspace{-.1cm}
Consider bilevel optimization problems of the form:
\beq \label{eq:bilevel}
\min_{x \in X \subseteq \RR^{d_1}} \ell(x) := f(x, y^\star(x)) \quad \mbox{subject to} \quad y^\star(x) \in \argmin_{y \in  \RR^{d_2}} ~g(x,y),
\eeq
where $d_1, d_2 \geq 1$ are integers; $X$ is a closed and convex subset of $\RR^{d_1}$, $f: X \times \RR^{d_2} \rightarrow \RR$ and $g : X \times \RR^{d_2} \rightarrow \RR$ are continuously differentiable functions with respect to (w.r.t.) $x,y$. 
Problem \eqref{eq:bilevel} involves two optimization problems following a two-level structure. We refer to $\min_{y \in \RR^{d_2}} g(x,y)$ as the \emph{inner problem} whose solution depends on $x$, and $g(x,y)$ is called the \emph{inner objective function}; $\min_{x \in X} \ell(x)$ is referred as the \emph{outer problem}, which represents the outer objective function that we wish to minimize and $\ell(x) \equiv f(x, y^\star(x))$ is called the \emph{outer objective function}. 
Moreover, both $f,g$ can be stochastic functions whose gradient may be difficult to compute. Despite being a non-convex stochastic problem in general, \eqref{eq:bilevel} has a wide range of applications, { e.g., reinforcement learning \cite{konda2000actor}, hyperparameter optimization \cite{franceschi2018bilevel}, game theory \cite{stackelberg}, etc..}

Tackling \eqref{eq:bilevel} is challenging as it involves solving the inner and outer optimization problems  \emph{simultaneously}. Even in the simplest case when $\ell(x)$ and  $g(x,y)$ are strongly convex in $x$, $y$, respectively, solving \eqref{eq:bilevel} is difficult. 
For instance, if we aim to minimize $\ell(x) $ via a gradient method,  
at any iterate $x^{\rm cur} \in \RR^{d_1}$ -- applying the gradient method for \eqref{eq:bilevel} involves a double-loop algorithm that {\sf (a)} solves the inner optimization problem $y^\star (x^{\rm cur}) = \argmin_{y \in \RR^{d_2}} g( x^{\rm cur}, y )$ and then {\sf (b)} evaluates the gradient as $\grd \ell(x^{\rm cur})$ based on the solution $y^\star (x^{\rm cur})$. Depending on the application, step {\sf (a)} is usually accomplished by applying yet another gradient method for solving the inner problem (unless a closed form solution for $y^\star (x^{\rm cur})$ exists). In this way, the resulting algorithm necessitates a double-loop structure. 
	
To this end, \cite{ghadimi2018approximation} and the references therein proposed a stochastic algorithm for \eqref{eq:bilevel} involving a \emph{double-loop} update. During the iterations, the inner problem $\min_{y \in \RR^{d_2}} g( x^{\rm cur}, y )$ is solved using a stochastic gradient (SGD) method, with the solution denoted by $\widehat{y}^\star( x^{\rm cur})$. Then, the outer problem is optimized with an SGD update using estimates of $\grd f(x^{\rm cur}, \widehat{y}^\star( x^{\rm cur}) )$. 
Such a double-loop algorithm is proven to converge to a stationary solution, yet a practical issues lingers: \emph{
	% How to select step size and termination criterion for the inner loop?
	What if the (stochastic) gradients of the inner and outer problems are only revealed sequentially? For example, when these problems are required to be updated at the same time such as in a sequential game.} 
	\begin{table}[t] 
		\caption{Summary of the main results. SC stands for strongly convex, WC for weakly convex, C for convex;  $k$ is the iteration counter, $K_{\max}$ is the total number of iterations.}
	\vspace{-0.2cm}	\label{fig:table_compare}
		\footnotesize
		\begin{center}
			\begin{sc}	\begin{tabular}{lcccc}
					\toprule 
					$\ell(x)$ &  Constraint & Step Size ($\alpha_k$, $\beta_k$) & Rate (outer) & Rate (Inner) \\
					\midrule
					SC  & $X \subseteq \mathbb{R}^{d_1}$& ${\cal O}(k^{-1})$,  ${\cal O}(k^{-2/3})$ & $\mathcal{O}(K_{\max}^{-2/3})^\dagger$ & $\mathcal{O}(K_{\max}^{-2/3})^\star$\\
					\midrule
					{C} &  {$X\subseteq \mathbb{R}^{d_1}$} & ${\cal O}(K_{\max}^{-3/4})$,  ${\cal O}(K_{\max}^{-1/2})$ & $\mathcal{O}(K_{\max}^{-1/4})^{\mathparagraph}$ & $\mathcal{O}(K_{\max}^{-1/2})^\star$\\
					\midrule
					WC &  $X\subseteq \mathbb{R}^{d_1}$ & ${\cal O}(K_{\max}^{-3/5})$,  ${\cal O}(K_{\max}^{-2/5})$ & $\mathcal{O}(K_{\max}^{-2/5})^{\#}$ & $\mathcal{O}(K_{\max}^{-2/5})^\star$\\
					\bottomrule
				\end{tabular}\vspace{.2cm}
			\end{sc}
		\end{center}
		$^\dagger$in terms of $\| x^{K_{\max}} - x^\star\|^2$, where $x^\star$ is the optimal solution; $^\star$in terms of $\|y^{K_{\max}} - y^\star(x^{K_{\max}-1})\|^2$, where $y^\star(x^{K_{\max}-1})$ is the optimal inner solution for fixed $x^{K_{\max}-1}$;  
		% $^{\ddagger}$measured using squared gradient norm; 
		$^{\mathparagraph}$measured using $\ell(x)-\ell(x^{\star})$; $^{\#}$measured using distance to a fixed point with the Moreau proximal map $\hat{x}(\cdot)$; see \eqref{eq:moreaum}. 	\vspace{-0.5cm}
	\end{table}
	
	To address the above issues, this paper investigates a \emph{single-loop} stochastic algorithm for \eqref{eq:bilevel}. 
	Focusing on a class of the bilevel optimization problem \eqref{eq:bilevel} where the inner problem is unconstrained and strongly convex, and the outer objective function is smooth, our contributions are three-fold:
	\begin{itemize}[leftmargin=5mm]
	\vspace{-0.2cm}
		\item We study a two-timescale stochastic approximation ({\sf TTSA}) algorithm \cite{borkar1997stochastic} for the concerned class of bilevel optimization. 
		The {\sf TTSA} algorithm updates both outer and inner solutions simultaneously, by using some cheap estimates of stochastic gradients of both inner and outer objectives. 
		The algorithm guarantees convergence by improving the inner (resp., outer) solution with a larger (resp., smaller) step size, also known as using a faster (resp., slower) timescale. 
		\item We analyze the expected convergence rates of the {\sf TTSA} algorithm. Our results are summarized in \Cref{fig:table_compare}.
		Our analysis is accomplished by building a set of \emph{coupled inequalities} for the one-step update in  {\sf TTSA}. For strongly convex outer function, we show inequalities that couple between the outer and inner optimality gaps. 
		For convex or weakly convex outer functions, we establish inequalities coupling between  the difference of outer iterates, the optimality of function values, and the inner optimality gap. 
		We also provide new and generic results for solving coupled inequalities.
		The distinction of timescales between  step sizes of the inner and outer updates plays a crucial role in our convergence analysis.
		\item Finally, we illustrate the application of our analysis results on a two-timescale natural actor-critic policy optimization algorithm with linear function approximation \cite{kakade2002natural, peters2008natural}. The natural  actor-critic algorithm converges at the rate ${\cal O}(K^{-1/4})$ to an optimal policy, which is comparable to the state-of-the-art results. 
	\end{itemize}
	The rest of this paper is organized as follows. 
	\S\ref{sec:ttsa} formally describes the problem setting of bilevel optimization and specify the problem class of interest. In addition, the {\sf TTSA} algorithm is introduced and some application examples are discussed.
	\S\ref{sec:main} presents the main convergence results for the generic bilevel optimization problem \eqref{eq:bilevel}. The convergence analysis is also presented where we highlight the main proof techniques used.
	Lastly, \S\ref{zwsec:RL} discusses the application to reinforcement learning.
	Notice that some technical details of the proof have been relegated to the online appendix \cite{Hong-TTSA-2020}.
	
	\vspace{-0.2cm}
	\subsection{Related Works}
	The study of bilevel optimization problem  \eqref{eq:bilevel} can be traced to that of Stackelberg games \cite{stackelberg}, where the outer (resp.~inner) problem optimizes the action taken by a leader (resp.~the follower). In the optimization literature, bilevel optimization was introduced in \cite{Bracken73} for resource allocation problems, and later studied in  \cite{Bracken74}. Furthermore, bilevel optimization is a special case of the broader class problem of Mathematical Programming with Equilibrium Constraints  \cite{luo_pang_ralph_1996}. 
	
	Many related algorithms have been proposed for bilevel optimization. This includes approximate descent methods \cite{Falk93,Vicente94}, and penalty-based method \cite{White93,Ishizuka92}. The approximate descent methods deal with a subclass of problem where  the outer problem possesses certain (local) differentiability property, while the penalty-based methods approximate the inner problems and/or the outer problems with an appropriate penalty functions. It is noted in  \cite{colson2007overview} that descent based { methods have} relatively strong assumptions about the inner problem (such as non-degeneracy), while the penalty based methods are typically slow. Moreover, these works typically focus on asymptotic convergence analysis, without characterizing the convergence rates;  see  \cite{colson2007overview} for a comprehensive survey.

	In \cite{Ji_ProvablyFastBilevel_Arxiv_2020,ghadimi2018approximation,couellan2016convergence}, the authors considered bilevel problems in the (stochastic) unconstrained setting, when the outer problem is non-convex and the inner problem is  strongly (or strictly) convex. These works are more related to the algorithms and results to be developed in the current paper. In this case, the (stochastic) gradient of the outer problem may be computed using the chain rule. However, to obtain an accurate estimate, one has to either use {\it double loop} structure where the inner loop solves the inner sub-problem to a high accuracy \cite{ghadimi2018approximation,couellan2016convergence}, or use a large batch-size (e.g., $\mathcal{O}(1/\epsilon)$) \cite{Ji_ProvablyFastBilevel_Arxiv_2020}.
	Both of these methods could be difficult to implement in practice { as the batch-size selection or number of required inner loop iterations are difficult to adjust}. In reference \cite{Sabach17}, the authors analyzed a special bilevel problem where there is a single optimization variable in both outer and inner levels. The authors proposed a Sequential Averaging Method (SAM) algorithm which can provably solve a problem with strongly convex outer problem, and convex inner problems. Building upon the SAM, \cite{liu2020generic,Li_ImprovedBilevel_Arxiv_2020} developed first-order algorithms for bilevel problem, without requiring that for each fixed outer-level variable, the inner-level solution must be a singleton. 
	
	In a different line of recent works, references \cite{shaban2019truncated, likhosherstov2020ufoblo} proposed and analyzed different versions of the so-called truncated back-propagation approach for approximating the (stochastic) gradient of the outer-problem, and
	established  convergence for the respective algorithms. The idea is to use a dynamical system to model an optimization algorithm
	that solves the inner problem, and then replace the optimal solution of the inner problem by unrolling a few iterations of the updates. 
	However, computing the {(hyper-)gradient of the objective function $\ell(x)$} requires using back-propagation through the optimization algorithm, and can be computationally very expensive. 
	It is important to note that none of the methods discussed above have considered {\it single-loop} stochastic algorithms, in which a small batch of samples are used to approximate the  inner and outer gradients at each iteration. Later we will see that the ability of being able to update using a small number of samples for both outer and inner problems is critical in a number of applications, { and it is also beneficial numerically.}
	
	In contrast to the above mentioned works, this paper considers  a {\sf TTSA} algorithm for stochastic bilevel optimization, which is a single-loop algorithm employing cheap stochastic estimates of the gradient.
	Notice that {\sf TTSA} \cite{borkar1997stochastic} is a class of algorithms designed to solve coupled system of (nonlinear) equations. While its asymptotic convergence property has been well understood, e.g., \cite{borkar1997stochastic,karmakar2018two,borkar2018concentration}, the convergence rate analysis have been focused on \emph{linear} cases, e.g., \cite{konda2004convergence,dalal2019tale,kaledin2020finite}. 
	In general, the bilevel optimization problem \eqref{eq:bilevel} requires a nonlinear {\sf TTSA} algorithm. For this case, an asymptotic convergence rate is analyzed in \cite{mokkadem2006convergence} under a restricted form of nonlinearity. For convergence rate analysis,  \cite{Sabach17} considered a single-loop algorithm for deterministic bilevel optimization with only one variable, and \cite{doan2020nonlinear} studied the convergence rate when the expected updates are strongly monotone. 
	
	Finally, it is worthwhile mentioning that
	various forms of {\sf TTSA} have been applied to tackle compositional stochastic optimization \cite{wang2017stochastic}, policy evaluation methods \cite{bhatnagar2009convergent, sutton2009fast}, and actor-critic methods \cite{konda2000actor,bhatnagar2008incremental,maei2010toward}. 
	Notice that some of these optimization problems can be cast as a bilevel optimization, as we will demonstrate next.
	\vspace{-0.2cm}
\paragraph{Notations} Unless otherwise noted, $\| \cdot \|$ is the  Euclidean norm on finite dimensional Euclidean space. For a twice differentiable function $f: X \times Y \rightarrow \RR$, $\grd_x f(x,y)$ (resp.~$\grd_y f(x,y)$) denotes its partial gradient taken w.r.t.~$x$ (resp.~$y$), and $\grd_{yx}^2 f(x,y)$ (resp.~$\grd_{xy}^2 f(x,y)$) denotes the Jacobian of $\grd_y f(x,y)$ at $y$ (resp.~$\grd_x f(x)$ at $x$). A function {$\ell(\cdot)$ is said to be weakly convex with modulus $\mu_\ell \in \RR$} if
	\begin{equation}\label{eq:weakly:convex}
	\ell(w) \ge \ell(v) + \langle \nabla \ell(v), w-v\rangle + \mu_\ell \|w-v\|^2, \; \forall~w, v\in X.
	\end{equation}
Notice that if $\mu_\ell \geq 0$ (resp.~$\mu_\ell>0$), then $\ell(\cdot)$ is convex (resp.~strongly convex).
	% \vspace{-.2cm}
	
	\section{Two-Timescale Stochastic Approximation Algorithm for \eqref{eq:bilevel}}
	\label{sec:ttsa}
	To formally define the problem class of interest, we state the following conditions on the bilevel optimization problem \eqref{eq:bilevel}. 
	\begin{assumption}\label{ass:f} The outer functions $f(x,y)$ and $\ell(x) \eqdef f(x, y^\star(x))$ satisfy:
		\begin{enumerate}[leftmargin=8mm]
			\item For any $x \in \RR^{d_1}$, $\nabla_x f(x, \cdot)$ and $\nabla_y f(x, \cdot)$  are Lipschitz continuous with respect to (w.r.t.) $y\in \mathbb{R}^{d_2}$, and with constants $L_{fx}$ and $L_{fy}$, respectively.
			\item For any $y\in \RR^{d_2}$, $\nabla_y f(\cdot, y)$ is Lipschitz continuous w.r.t.~$x \in X$, and with constant $\bar{L}_{fy}$.
			\item For any $x\in X, y\in \RR^{d_2}$, we have $\|\nabla_y f(x,y)\|\le C_{f_y}$, for some $C_{f_y}>0$.
			% \item The function {$\ell(\cdot)$ is weakly convex with modulus $\mu_\ell$}, i.e.,
			% \begin{align}\label{eq:weakly:convex}
			% \ell(w) \ge \ell(v) + \langle \nabla \ell(v), w-v\rangle + \mu_\ell \|w-v\|^2, \; \forall~w, v\in X.
			% \end{align}
		\end{enumerate}
	\end{assumption}
	
	\begin{assumption}\label{ass:g} The inner function $g(x,y)$ satisfies:
		\begin{enumerate}[leftmargin=8mm]
			\item For any $x\in X$ and $y\in \RR^{d_2}$, $g(x,y)$ is twice continuously differentiable in $(x,y)$;
			\item For any $x\in X$, $\nabla_y g(x, \cdot)$ is Lipschitz continuous w.r.t. $y \in \RR^{d_2}$, and with constant $L_g$.
			\item For any $x\in X$, $g(x,\cdot)$ is strongly convex in $y$, and with modulus $\mu_g>0$. 
			\item For any ${x}\in X$, $\nabla^2_{xy}g(x,\cdot)$ and $\nabla^2_{yy}g(x,\cdot)$ are Lipschitz continuous w.r.t. $y \in \RR^{d_2}$, and with constants $L_{gxy}>0$ and $L_{gyy}>0$, respectively. 
			\item For any $y\in\mathbb{R}^m$, $\nabla^2_{xy}g(\cdot,y)$ and $\nabla^2_{yy}g(\cdot,y)$ are Lipschitz continuous w.r.t. $x \in X$, and with constants $\bar{L}_{gxy}>0$ and $\bar{L}_{gyy}>0$, respectively.
			\item For any $x\in X$ and $y\in \mathbb{R}^{d_2}$, we have $\|\nabla^2_{xy}g(x,y)\|\le C_{gxy}$ for some $C_{gxy}>0$. 
		\end{enumerate}
	\end{assumption}
	\vspace{-0.2cm}
% 	In \Cref{ass:f}-4, if $\mu_\ell \geq 0$, then problem \eqref{eq:bilevel} will be convex. 
	Basically, \Cref{ass:f}, \ref{ass:g} require that the inner and outer functions $f,g$ are well-behaved. In particular, $\nabla_x f$, $\nabla_y f$, $\nabla^2_{xy}g$, and $\nabla^2 _{yy} g$ are Lipschitz continuous w.r.t.~$x$ when $y$ is fixed, and Lipschitz continuous w.r.t.~$y$ when $x$ is fixed. {These assumptions are satisfied by common problems in machine learning and optimization, e.g., the application examples discussed in Sec.~\ref{sec:app}.}

%	Lastly, we remark that \Cref{ass:g}.(3) can be relaxed to requiring one-point strong convexity in our analysis. 

	Our first endeavor is to develop a single-loop stochastic algorithm for tackling \eqref{eq:bilevel}. Focusing on solutions which satisfy the first-order stationary condition of \eqref{eq:bilevel}, we aim at finding a pair of solution $(x^\star, y^\star)$ such that 
	\beq \label{eq:opt_cond}
	\grd_y g(x^\star,y^\star) = 0, \quad \langle \grd \ell(x^\star), x - x^\star \rangle \geq 0,~\forall~x \in X.
	\eeq
	Given $x^\star$, a solution $y^\star$ satisfying the first condition in \eqref{eq:opt_cond} may be found by a cheap stochastic gradient recursion such as $y \leftarrow y - \beta h_g$ with $\EE[ h_g ] = \grd_y g(x^\star,y)$. 
	On the other hand, given $y^\star(x)$ and suppose that we can obtain a cheap stochastic gradient estimate $h_f$ with $\EE[h_f] = \grd \ell(x) = \Bgrd f(x,y^\star(x))$, where $\Bgrd f(x,y)$ is a surrogate for $\grd \ell(x)$ (to be described later), then the second condition can be satisfied by a simple projected stochastic gradient recursion as $x \leftarrow P_X( x - \alpha h_f )$, where $P_X(\cdot)$ denotes the Euclidean projection onto $X$.
	
A challenge in designing a \emph{single-loop} algorithm for satisfying \eqref{eq:opt_cond} is to ensure that the outer function's gradient $\Bgrd f(x,y)$ is evaluated at an inner solution $y$ that is close to $y^\star(x)$.
This led us to develop a \emph{two-timescale stochastic approximation} ({\sf TTSA}) \cite{borkar1997stochastic} framework, as summarized in Algorithm 1. An important feature is that the algorithm utilizes two step sizes $\alpha_k$, $\beta_k$ for the outer ($x^k$), inner ($y^k$) solution, respectively, designed with different timescales as $\alpha_k / \beta_k \rightarrow 0$.
As a larger step size is taken to optimize $y^k$, the latter shall stay close to $y^\star(x^k)$. 
Using this strategy, it is expected that $y^k$ will converge to $y^\star (x^k)$ asymptotically. 

	\ifonlineapp 
	\else
	\capstartfalse
	\fi
	\begin{figure}[t]
	\begin{center}
	\vspace{-0cm}
	\setlength\fboxrule{0.0pt}
	\noindent\fcolorbox{black}[rgb]{0.95,0.95,0.95}{\begin{minipage}{0.98\columnwidth}
	\begin{center}
	{\bf Algorithm 1. Two-Timescale Stochastic Approximation ({\sf TTSA})}
	\end{center}
	{\bf S0)} 	Initialize the variable $(x^0, y^0) \in X \times \RR^{d_2}$ and the step size sequence $\{ \alpha_k, \beta_k \}_{k \geq 0}$; \\
	{\bf S1)}	For iteration $k=0,...,K$,
	\begin{subequations} \label{eq:ttsgd}
	\begin{align}
	y^{k+1} & = y^k - \beta_k \cdot h^k_g, \label{eq:y:update}\\
	x^{k+1} & = P_X( x^k - \alpha_k \cdot h^k_f ),
	% \argmin_{x \in X} \big\{ \pscal{h_f^k}{x - x^k} + 1 / (2 \alpha_k) \cdot  \|x - x^k\|^2\big\} 
	\label{eq:x:update}
	\end{align}
	\end{subequations}
	where $h_g^k$, $h^k_f$ are  stochastic estimates of $\nabla_y g(x^k, y^{k})$, $\Bgrd f(x^k,y^{k+1})$ [cf.~\eqref{eq:bar:gradient}], respectively, satisfying \Cref{ass:stoc} given below. Moreover, $P_X(\cdot)$ is the Euclidean projection operator onto the convex set $X$.
	\end{minipage}}
	\vspace{-0.4cm}
	\end{center} 
	\end{figure}
	\ifonlineapp
	\else
	\capstarttrue
	\fi
	
Inspired by \cite{ghadimi2018approximation}, we provide a method for computing a surrogate of $\grd \ell(x)$ given $y$ with general objective functions satisfying \Cref{ass:f}, \ref{ass:g}. Given $y^\star(x)$, we observe that using chain rule, the gradient of $\ell(x)$ can be derived as 
\beq
\nabla \ell(x) = \nabla_x f \bigl (x, y^\star (x) \bigr ) - \nabla^2_{xy} g\bigl (x,y^\star (x) \bigr )\big [\nabla^2_{yy}g\big (x,y^\star (x)  \big )\big]^{-1} \nabla_y f\big (x,y^\star (x) \bigr ). 
\eeq
We note that the computation of the above gradient critically depends on the fact that the inner problem is strongly convex and unconstrained, so that the inverse function theorem can be applied when computing $\nabla y^*(x)$.

We may now define $\Bgrd f(x,y)$ as a surrogate of  $\nabla \ell(x)$  by replacing $y^\star (x)$ with $y \in \RR^{d_2}$:
\begin{align}\label{eq:bar:gradient}
\Bgrd f(x,y) & := \nabla_x f(x,y) -  \nabla^2_{xy} g(x,y)[\nabla^2_{yy}g(x,y)]^{-1} \nabla_y f(x,y).
\end{align} 
Notice that $\grd \ell(x) = \Bgrd f(x, y^\star(x))$. Eq.~\eqref{eq:bar:gradient} is a surrogate for $\grd \ell(x)$ that may be used in {\sf TTSA}. We emphasize that \eqref{eq:bar:gradient} is not the only construction and the {\sf TTSA} can accommodate other forms of gradient surrogate. For example, see \eqref{eq:other_surrogate} in the application of our results to actor-critic.
% 	A natural idea to satisfy the second condition in \eqref{eq:opt_cond} is to apply a simple projected stochastic gradient recursion, e.g., $x \leftarrow P_X( x - \alpha h_f )$ with $\EE[h_f] = \Bgrd f(x,y)$, where $P_X(\cdot)$ denotes the Euclidean projection onto $X$. 

% 	Note that $h_g^k$, $h^k_f$ are stochastic estimates of $\nabla_y g(x^k, y^{k})$, $\Bgrd f(x^k,y^{k+1})$, respectively. 
Let ${\cal F}_k \eqdef \sigma\{ y^0, x^0, ..., y^k, x^k \}$, ${\cal F}_k' \eqdef \sigma\{ y^0, x^0..., y^k, x^k, y^{k+1} \}$ be the filtration of the random variables up to iteration $k$, where $\sigma\{ \cdot \}$ denotes the $\sigma$-algebra generated by the random variables. We consider the following assumption regarding $h_f^k, h_g^k$: 
\begin{assumption} \label{ass:stoc}
For any $k \geq 0$, there exist constants $\sigma_g, \sigma_f$, and a nonincreasing sequence $\{b_k\}_{k \geq 0}$ such that: 
\begin{subequations}\label{eq:property:h}
\begin{align}
~& \mathbb{E} [h^k_g | {\cal F}_k ] = \grd_y g(x^k, y^{k}),~~\mathbb{E}[h^k_f | {\cal F}_k' ] = \Bgrd f(x^k,y^{k+1}) + B_k,~~\| B_k \| \leq b_k ,\label{eq:property:hg1} \\
~& \mathbb{E}[\|h^k_g - \grd_y g(x^k, y^{k})\|^2 | {\cal F}_k ]\le \sigma^2_g \cdot \{ 1 + \| \grd_y g(x^k,y^k)\|^2 \}, \label{eq:property:hg}\\
~& \mathbb{E}[\|h^k_f - B_k - \Bgrd f(x^k,y^{k+1})\|^2 | {\cal F}_k' ]\le \sigma^2_f.\label{eq:property:hf:bound}
\vspace{-.4cm}
\end{align}
\end{subequations}
\end{assumption}
Notice that the conditions on $h_g^k$ are standard when the latter is taken as a stochastic gradient of $g(x^k,y^k)$, while $h_f^k$ is a potentially biased estimate of $\Bgrd f(x^k,y^{k+1})$. As we will see in our convergence analysis, the bias shall decay polynomially to zero. 
	
In light of \eqref{eq:bar:gradient} and as inspired by \cite{ghadimi2018approximation}, we suggest to construct a stochastic estimate of $\Bgrd f(x^k,y^{k+1})$ 
% {\red[why use $\approx$ here?]} 
as follows.
% We declare $x \equiv x^k, y \equiv y^{k+1}$. 
% Assume that we can draw i.i.d.~samples from the distributions $\mu^{(1)}, \mu^{(2)}$. 
% The challenge in constructing an unbiased sample of $\Bgrd f(x^k,y^{k+1})$ lies in the Hessian inverse involved in its calculation, as it is known that $\EE_{\mu^{(2)}} \{  [ \grd_{yy}^2 g(x^k,y^{k+1}; \xi^{(2)})] ^{-1} \} \neq  [ \grd_{yy}^2 g(x^k,y^{k+1}) ] ^{-1} $. 
% Inspired by \cite{ghadimi2018approximation}, 
Let $\tmax(k) \geq 1$ be an integer, ${\rm c}_h \in (0,1]$ be a scalar parameter, and denote $x \equiv x^k, y \equiv y^{k+1}$ for brevity. Consider:
\begin{enumerate}[leftmargin=*, topsep = 0mm]
\item Select ${\sf p} \in \{0, \dots, \tmax(k)-1\}$ uniformly at random and draw $2+{\sf p}$ {independent} samples as $\xi^{(1)} \sim \mu^{(1)}$, $\xi_0^{(2)}, \dots, \xi_{\sf p}^{(2)} \sim \mu^{(2)}$. 
\item Construct the gradient estimator $h_f^k$ as
% \beq \notag
\begin{align*}
& h_f^k = \grd_x f(x,y ; \xi^{(1)}) -  \\
& \grd_{xy}^2 g(x,y; \xi_0^{(2)}) \left [  \frac{ \tmax(k) \, {\rm c}_h }{L_g} \prod_{i=1}^{\sf p} \Big (  I - \frac{ {\rm c}_h }{ L_g } \grd_{yy}^2 g(x,y ; \xi_i^{(2)} )  \Big )  \right ]   \grd_y f(x,y; \xi^{(1)} ),
\end{align*}
% \eeq
where  as a convention, we set $\prod_{i=1}^0 \big( I - \frac{{\rm c}_h}{L_g} \grd_{yy}^2 g(x,y ; \xi_i^{(2)} \big) = I$. 
\end{enumerate}
In the above, the distributions $\mu^{(1)}, \mu^{(2)}$ are defined such that they yield unbiased estimate of the gradients/Jacobians/Hessians as:
\begin{align}
& \grd_x f(x,y) = \EE_{\mu^{(1)}} [ \grd_x f(x,y ; \xi^{(1)}) ], \quad \grd_y f(x,y) = \EE_{\mu^{(1)}} [ \grd_y f(x,y ; \xi^{(1)}) ] ,\label{eq:add_hfe_main} \\
& \grd_{xy}^2 g(x,y) = \EE_{\mu^{(2)}} [ \grd_{xy}^2 g(x,y; \xi^{(2)}) ], \quad \grd_{yy}^2 g(x,y) = \EE_{\mu^{(2)}} [ \grd_{yy}^2 g(x,y; \xi^{(2)}) ], \nonumber
\end{align}
and satisfying $\EE[ \| \grd_y f(x,y; \xi^{(1)}) \|^2] \leq C_y$, $\EE[ \| \grd_{xy}^2 g(x,y; \xi^{(2)}) \|^2] \leq C_g$, 
\begin{align} \label{eq:add_hf_main}
	\begin{split}
	& \EE[ \| \grd_x f(x,y) - \grd_x f(x,y; \xi^{(1)}) \|^2 ] \leq \sigma_{fx}^2, \quad \EE[ \| \grd_y f(x,y) - \grd_y f(x,y; \xi^{(1)}) \|^2 ] \leq \sigma_{fy}^2 , \\
	& \EE[ \| \grd_{xy}^2 g(x,y) - \grd_{xy}^2 g(x,y; \xi^{(2)}) \|_2^2 ] \leq \sigma_{gxy}^2 , 
	\end{split}
\end{align}
note that $\|\cdot \|_2$ is the Schatten-2 norm. For convenience of analysis, we assume $\frac{\mu_g}{\mu_g^2 + \sigma_{gxy}^2} \leq 1$, $L_g \geq 1$. The next lemma shows that $h_f^k$ satisfies \Cref{ass:stoc}.  
\begin{Lemma} \label{lem:hessinv_main}
	Under \Cref{ass:f}, \ref{ass:g}, \eqref{eq:add_hfe_main}, \eqref{eq:add_hf_main}, and ${\rm c}_h = {\mu_g} / ({\mu_g^2 + \sigma_{gxy}^2} )$, then for any $x \in X, y \in \RR^{d_2}, \tmax(k) \geq 1$, it holds that
	\beq \label{eq:bias_hfk_main}
	\left\| \Bgrd f(x^k,y^{k+1}) - \EE[ h_f^k ] \right\| \leq C_{gxy} C_{fy} \cdot \frac{1}{\mu_g} \cdot  \Big( 1 - \frac{\mu_g^2}{L_g (\mu_g^2+\sigma_{gxy}^2)} \Big)^{\tmax(k)}.
	\eeq
	Furthermore, the variance is bounded as 
	\beq \label{eq:var_hfk_main}
	\EE[ \| h_f^k - \EE[h_f^k] \|^2 ] \leq \sigma_{fx}^2 + \Big [   (\sigma_{fy}^2 + C_y^2)  \{ \sigma_{gxy}^2 + 2 C_{gxy}^2 \} + \sigma_{fy}^2 C_{gxy}^2 \Big ] \max\big\{ \frac{3}{\mu_g^2}, \frac{3d_1 / L_g}{\mu_g^2 + \sigma_{gxy}^2} \big\} .
	\eeq
\end{Lemma}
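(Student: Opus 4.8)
The statement splits into the bias bound \eqref{eq:bias_hfk_main} and the variance bound \eqref{eq:var_hfk_main}, and I would prove them in turn; in both, $x,y$ are fixed and $\EE$ is over the fresh draws of ${\sf p}$, $\xi^{(1)}$ and $\xi_0^{(2)},\dots,\xi_{\sf p}^{(2)}$. Throughout write $N := \grd_{yy}^2 g(x,y)$, $Q := I - \tfrac{{\rm c}_h}{L_g} N$, and $B_i := I - \tfrac{{\rm c}_h}{L_g}\grd_{yy}^2 g(x,y;\xi_i^{(2)})$. First note that \Cref{ass:g} gives $\mu_g I \preceq N \preceq L_g I$, so with ${\rm c}_h \le 1$ one has $0 \preceq Q \preceq (1-\tfrac{{\rm c}_h\mu_g}{L_g})I$, hence $\|Q\| \le \rho := 1 - \tfrac{{\rm c}_h\mu_g}{L_g} < 1$; this legitimizes the Neumann expansion $N^{-1} = \tfrac{{\rm c}_h}{L_g}\sum_{p\ge0} Q^p$. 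For the bias I would compute $\EE[h_f^k]$, exploiting that $\xi^{(1)}$ is drawn independently of ${\sf p}$ and of $\xi_0^{(2)},\dots,\xi_{\sf p}^{(2)}$ and that $\xi_0^{(2)}$ is independent of $\xi_1^{(2)},\dots,\xi_{\sf p}^{(2)}$: by \eqref{eq:add_hfe_main} the factors $\grd_x f(\cdot;\xi^{(1)})$, $\grd_y f(\cdot;\xi^{(1)})$, $\grd_{xy}^2 g(\cdot;\xi_0^{(2)})$ may be replaced by their means, while the $B_i$ are i.i.d.\ with mean $Q$ so that $\EE[\prod_{i=1}^{p}B_i] = Q^p$ and, averaging over ${\sf p}\sim\mathrm{Unif}\{0,\dots,\tmax(k)-1\}$, $\EE\big[\tfrac{\tmax(k){\rm c}_h}{L_g}\prod_{i=1}^{\sf p}B_i\big] = \tfrac{{\rm c}_h}{L_g}\sum_{p=0}^{\tmax(k)-1}Q^p$. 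Hence $\EE[h_f^k] = \grd_x f(x,y) - \grd_{xy}^2 g(x,y)\,\tfrac{{\rm c}_h}{L_g}\big(\sum_{p=0}^{\tmax(k)-1}Q^p\big)\grd_y f(x,y)$, which by \eqref{eq:bar:gradient} differs from $\Bgrd f(x,y)$ only by the Neumann tail $\grd_{xy}^2 g(x,y)\,\tfrac{{\rm c}_h}{L_g}\big(\sum_{p\ge\tmax(k)}Q^p\big)\grd_y f(x,y)$. Bounding its norm by $C_{gxy}\cdot\tfrac{{\rm c}_h}{L_g}\cdot\tfrac{\rho^{\tmax(k)}}{1-\rho}\cdot C_{fy}$ via $\|\grd_{xy}^2 g(x,y)\|\le C_{gxy}$ and $\|\grd_y f(x,y)\|\le C_{fy}$ (\Cref{ass:f}, \Cref{ass:g}) together with $\tfrac{{\rm c}_h/L_g}{1-\rho} = \tfrac{1}{\mu_g}$, then substituting ${\rm c}_h = \mu_g/(\mu_g^2+\sigma_{gxy}^2)$ so that $\rho = 1 - \tfrac{\mu_g^2}{L_g(\mu_g^2+\sigma_{gxy}^2)}$, yields \eqref{eq:bias_hfk_main}.

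For the variance I would write $h_f^k = u - Av$ with $u := \grd_x f(x,y;\xi^{(1)})$, $v := \grd_y f(x,y;\xi^{(1)})$ and $A := \grd_{xy}^2 g(x,y;\xi_0^{(2)})\,\tfrac{\tmax(k){\rm c}_h}{L_g}\prod_{i=1}^{\sf p}B_i$, so that $A$ is independent of $(u,v)$. Using the tower property I would separate the $\xi^{(1)}$-variance (conditional on $A$) from the variance over $A$: $\EE\|h_f^k - \EE h_f^k\|^2 = \EE\|h_f^k - \EE_{\xi^{(1)}}[h_f^k\mid A]\|^2 + \EE\|\EE_{\xi^{(1)}}[h_f^k\mid A] - \EE h_f^k\|^2$. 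The first term is controlled by the deviations of $u,v$ from $\grd_x f(x,y),\grd_y f(x,y)$ (contributing $\sigma_{fx}^2$ and, through the size of $A$, a multiple of $\sigma_{fy}^2$, via \eqref{eq:add_hf_main}), handling the residual cross-term by conditioning and Cauchy--Schwarz; the second term equals $\EE\|(A-\EE A)\grd_y f(x,y)\|^2 \le \|\grd_y f(x,y)\|^2\cdot\|\EE[(A-\EE A)^\top(A-\EE A)]\|$. The crux is the second-moment matrix $V_p := \EE\big[(\prod_{i=1}^{p}B_i)^\top\prod_{i=1}^{p}B_i\big]$: since the $B_i$ are symmetric and i.i.d., $V_p = QV_{p-1}Q + \tfrac{{\rm c}_h^2}{L_g^2}\EE\big[(\grd_{yy}^2 g(x,y;\xi^{(2)})-N)V_{p-1}(\grd_{yy}^2 g(x,y;\xi^{(2)})-N)\big]$ with $V_0 = I$, whence, using $\|Q\|\le\rho$ and the sample-variance control in \eqref{eq:add_hf_main}, $V_p \preceq c^p I$ for a contraction factor $c$ with $1-c \ge \tfrac{\mu_g^2}{L_g(\mu_g^2+\sigma_{gxy}^2)}$ -- this is precisely where the choice ${\rm c}_h = \mu_g/(\mu_g^2+\sigma_{gxy}^2)$ and $L_g\ge1$ enter, ensuring the geometric contraction of $Q$ dominates the injected sampling noise. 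Feeding this back with $\EE\|\grd_{xy}^2 g(x,y;\xi_0^{(2)})\|^2 \le C_{gxy}^2$ in operator norm (resp.\ $\le C_g$ in Schatten-$2$ norm), the bounds on $\EE\|v\|^2$ in terms of $\sigma_{fy}^2$ and $C_y$, and summing the geometric series in $p$, produces the prefactor $\max\{3/\mu_g^2,\ 3d_1/(L_g(\mu_g^2+\sigma_{gxy}^2))\}$ multiplying $(\sigma_{fy}^2+C_y^2)(\sigma_{gxy}^2+2C_{gxy}^2) + \sigma_{fy}^2 C_{gxy}^2$, the two alternatives in the maximum arising from the two routes for passing between operator and Schatten-$2$ norms of the $d_1\times d_2$ Jacobian $\grd_{xy}^2 g$ (the $d_1$ entering through $\|M\|_2^2 \le d_1\|M\|^2$); collecting terms gives \eqref{eq:var_hfk_main}.

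The main obstacle is the variance half -- concretely, controlling the randomized truncated-Neumann estimator of $[\grd_{yy}^2 g]^{-1}$. One must propagate $V_p$ through a product of i.i.d.\ random matrices and check that the contraction $\|Q\|^2 \le \rho^2$ strictly beats the added noise $\tfrac{{\rm c}_h^2}{L_g^2}\|\mathrm{Var}(\grd_{yy}^2 g(x,y;\xi^{(2)}))\|$, so that $V_p$ still decays geometrically at the claimed rate, and then combine these bounds with the $\xi^{(1)}$-side moments ($\sigma_{fx}^2,\sigma_{fy}^2,C_y$) and the Hessian-side moments ($C_{gxy},C_g$), keeping careful track of constants, to land on \eqref{eq:var_hfk_main}. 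The bias bound, by contrast, is a short Neumann-tail estimate once $\|Q\|\le\rho$ has been established.
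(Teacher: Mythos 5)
Your proposal is correct and, at the top level, follows the same route as the paper: the bias is obtained by reducing $\EE[h_f^k]$ to the partial Neumann series $\frac{{\rm c}_h}{L_g}\sum_{p=0}^{\tmax(k)-1}Q^p$ with $Q = I - \frac{{\rm c}_h}{L_g}\grd_{yy}^2 g(x,y)$ and bounding the geometric tail (the paper outsources exactly this last norm estimate to \cite[Lemma 3.2]{ghadimi2018approximation}, whereas you write out the tail sum, which is self-contained and fine); the variance is obtained by splitting off the $\grd_x f$ noise and controlling the second moment of the randomized truncated inverse, with the same contraction condition $(1-\mu)^2+\sigma^2<1$ driven by ${\rm c}_h = \mu_g/(\mu_g^2+\sigma_{gxy}^2)$ and $L_g \ge 1$. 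The one place you genuinely diverge is the key matrix-product estimate. The paper's \Cref{lem:product} bounds $\EE[\|\prod_{i\le p}B_i\|_2^2] \le d\,((1-\mu)^2+\sigma^2)^p$ via a Schatten-$2$ ``Pythagoras'' inequality for the mean-plus-fluctuation decomposition $Z_i = (Y_i-\EE Y_i)Z_{i-1} + \EE[Y_i]Z_{i-1}$, the dimension factor coming from $\|I\|_2^2 = d$. You instead propagate the PSD second-moment matrix $V_p = \EE[Z_p^\top Z_p]$ through the Loewner-order recursion $V_p = QV_{p-1}Q + \EE[(B-Q)V_{p-1}(B-Q)] \preceq \big(\rho^2 + \tfrac{{\rm c}_h^2}{L_g^2}\sigma_{gxy}^2\big)^p I$; this is the same decomposition tracked in the Loewner order rather than in Schatten norm, and it buys a dimension-free bound on the quantity you actually need, namely $w^\top V_p w$ for the fixed vector $w = \grd_y f(x,y)$ --- you reintroduce the $\|M\|_2^2 \le d_1\|M\|^2$ equivalence only to reproduce the paper's stated constant. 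Two small remarks: your induction step $V_p \preceq c^{p-1}(Q^2 + \EE[(B-Q)^2])$ needs the (easy) fact that $SMS \preceq \|M\| S^2$ for symmetric $S$ and PSD $M$, which you should state; and both your argument and the paper's implicitly use $\sigma_{gxy}^2$ as a variance proxy for $\grd_{yy}^2 g(x,y;\xi^{(2)})$ even though \eqref{eq:add_hf_main} only records a variance bound for $\grd_{xy}^2 g$ --- a shared, inherited imprecision rather than a gap specific to your write-up.
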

The proof of the above lemma is relegated to our online appendix, see \S E in \cite{Hong-TTSA-2020}.  Note that the variance bound \eqref{eq:var_hfk_main} relies on analyzing the expected norm of product of random matrices using the techniques inspired by \cite{durmus2021tight, huang2021matrix}. Finally, observe that the upper bounds in \eqref{eq:bias_hfk_main}, \eqref{eq:var_hfk_main} correspond to $b_k$, $\sigma_f^2$ in \Cref{ass:stoc}, respectively, and the requirements on $h_f^k$ are satisfied. 
% The additional conditions in \eqref{eq:add_hf} are standard as they only require the stochastic gradient/Jacobian/Hessian to have bounded variance.
	
To further understand the property of the {\sf TTSA} algorithm with \eqref{eq:bar:gradient}, we borrow the following results from \cite{ghadimi2018approximation} on the Lipschitz continuity of the maps $\grd \ell(x)$, $y^\star(x)$: 
\begin{Lemma} \cite[Lemma 2.2]{ghadimi2018approximation} \label{lem:lips}
Under \Cref{ass:f}, \ref{ass:g}, it holds
\begin{subequations}
\begin{align}
\|\Bgrd f(x,y) - \nabla \ell(x)\| \le L\|y^\star(x) -y\|,~~\|y^\star(x_1)-y^\star(x_2)\| & \le L_y \|x_1-x_2\|, \label{eq:lip:f:bar}\\
\|\nabla \ell(x_1)-\nabla \ell(x_2)\|= \|{\nabla} f(x_1, y^\star(x_1)) - \nabla f(x_2, y^\star(x_2))\|& \le L_f\| x_1 -x_2\|. \label{eq:lip:f} %\\
%. \label{eq:lip:y}
\end{align}
\end{subequations}
for any $x, x_1,x_2 \in X$ and $y \in \RR^{d_2}$, where we have defined
\begin{align} \label{eq:lips_const}
\begin{split}
L := L_{f_x} + \frac{L_{f_y} C_{g_{xy}}}{\mu_g}  & + C_{f_y} \bigg ( \frac{ L_{g_{xy}} }{\mu_g}   + \frac{ L_{g_{yy}} C_{g_{xy}} }{ \mu_g^2 } \bigg ) ,\\ 
L_f : = L_{f_x} + \frac{(\bar{L}_{f_y} + L) C_{g_{xy}}}{\mu_g} & + C_{f_y} \bigg (  \frac{ \bar{L}_{g_{xy}} }{\mu_g} + \frac{ \bar{L}_{g_{yy}} C_{g_{xy}} }{ \mu_g^2 } \bigg )  , \quad L_y = \frac{C_{g_{xy}}}{\mu_g}.
\end{split}
\end{align}
\end{Lemma}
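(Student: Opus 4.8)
Since this statement is exactly \cite[Lemma 2.2]{ghadimi2018approximation}, the plan is to reproduce its proof, which is purely analytic and hinges on the implicit characterization of $y^\star(\cdot)$. First I would record the first-order condition: because $g(x,\cdot)$ is $\mu_g$-strongly convex (\Cref{ass:g}), $y^\star(x)$ is the unique point with $\grd_y g(x, y^\star(x)) = 0$ for every $x\in X$. For the bound $\|y^\star(x_1) - y^\star(x_2)\| \le L_y\|x_1-x_2\|$ I would not even need the implicit function theorem: subtracting the two stationarity equations and using $\mu_g$-strong monotonicity of $\grd_y g(x_1,\cdot)$ together with the fact that $\grd_y g(\cdot, y)$ is $C_{gxy}$-Lipschitz in $x$ (a consequence of $\|\grd_{xy}^2 g\|\le C_{gxy}$, \Cref{ass:g}), one gets $\mu_g\|y^\star(x_1)-y^\star(x_2)\|^2 \le \langle \grd_y g(x_2,y^\star(x_2)) - \grd_y g(x_1, y^\star(x_2)),\, y^\star(x_1)-y^\star(x_2)\rangle \le C_{gxy}\|x_1-x_2\|\,\|y^\star(x_1)-y^\star(x_2)\|$, i.e.\ $L_y = C_{gxy}/\mu_g$. (The implicit function theorem, valid here since $\grd_{yy}^2 g$ is invertible everywhere with $\|[\grd_{yy}^2 g]^{-1}\|\le 1/\mu_g$, gives the same through $\grd y^\star(x) = -[\grd_{yy}^2 g(x,y^\star(x))]^{-1}\grd_{yx}^2 g(x,y^\star(x))$ and integration along the segment in the convex set $X$; this also underlies the chain-rule identity $\grd\ell(x) = \Bgrd f(x,y^\star(x))$ recorded after \eqref{eq:bar:gradient}.)

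Next, for $\|\Bgrd f(x,y) - \grd\ell(x)\| = \|\Bgrd f(x,y) - \Bgrd f(x,y^\star(x))\| \le L\|y-y^\star(x)\|$, I would expand the difference term by term from \eqref{eq:bar:gradient}. The $\grd_x f$ part contributes $L_{fx}\|y-y^\star(x)\|$ by \Cref{ass:f}. For the product $\grd_{xy}^2 g(x,\cdot)\,[\grd_{yy}^2 g(x,\cdot)]^{-1}\,\grd_y f(x,\cdot)$ I would telescope over the three factors --- perturb one factor, freeze the other two --- bounding the frozen factors by the uniform bounds $\|\grd_{xy}^2 g\|\le C_{gxy}$, $\|[\grd_{yy}^2 g]^{-1}\|\le 1/\mu_g$, $\|\grd_y f\|\le C_{fy}$, and the three perturbations by $\|\grd_{xy}^2 g(x,y) - \grd_{xy}^2 g(x,y^\star(x))\|\le L_{gxy}\|y-y^\star(x)\|$, $\|\grd_y f(x,y)-\grd_y f(x,y^\star(x))\|\le L_{fy}\|y-y^\star(x)\|$, and --- via the resolvent identity $A^{-1}-B^{-1}=A^{-1}(B-A)B^{-1}$ with the Lipschitz constant $L_{gyy}$ of $\grd_{yy}^2 g(x,\cdot)$ --- $\|[\grd_{yy}^2 g(x,y)]^{-1} - [\grd_{yy}^2 g(x,y^\star(x))]^{-1}\|\le (L_{gyy}/\mu_g^2)\|y-y^\star(x)\|$. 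Collecting the four contributions reproduces exactly $L = L_{fx} + L_{fy}C_{gxy}/\mu_g + C_{fy}(L_{gxy}/\mu_g + L_{gyy}C_{gxy}/\mu_g^2)$, as in \eqref{eq:lips_const}.

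Finally, for $\|\grd\ell(x_1) - \grd\ell(x_2)\|\le L_f\|x_1-x_2\|$, I would insert the intermediate point $\Bgrd f(x_1, y^\star(x_2))$ and split $\grd\ell(x_1)-\grd\ell(x_2) = \bigl(\Bgrd f(x_1,y^\star(x_1)) - \Bgrd f(x_1,y^\star(x_2))\bigr) + \bigl(\Bgrd f(x_1,y^\star(x_2)) - \Bgrd f(x_2,y^\star(x_2))\bigr)$. The first difference is $\le L\|y^\star(x_1)-y^\star(x_2)\| \le LL_y\|x_1-x_2\| = (LC_{gxy}/\mu_g)\|x_1-x_2\|$ by the two bounds just established. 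The second difference, with the inner argument held fixed, is treated by the \emph{same} telescoping, now perturbing the first argument of each factor and using the $x$-Lipschitz constants $\bar{L}_{fy}$ (for $\grd_y f$), $\bar{L}_{gxy}$ (for $\grd_{xy}^2 g$), $\bar{L}_{gyy}$ (for $\grd_{yy}^2 g$) from \Cref{ass:f}, \ref{ass:g} together with the Lipschitz continuity of $\grd_x f(\cdot, y)$; this yields $L_{fx} + \bar{L}_{fy}C_{gxy}/\mu_g + C_{fy}(\bar{L}_{gxy}/\mu_g + \bar{L}_{gyy}C_{gxy}/\mu_g^2)$, and adding the first part's contribution $LC_{gxy}/\mu_g$ gives precisely $L_f$ in \eqref{eq:lips_const}.

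\textbf{Main obstacle.} The only genuinely delicate point is the perturbation estimate for the inverse-Hessian factor: it needs the spectral lower bound $\grd_{yy}^2 g \succeq \mu_g I$ both to control $\|[\grd_{yy}^2 g]^{-1}\|$ and to produce the $\mu_g^{-2}$ in the resolvent bound, combined with the Lipschitz bounds on $\grd_{yy}^2 g$; and one must check that the telescoping is legitimate --- each intermediate product is well defined because $\grd_{yy}^2 g$ is invertible at \emph{every} point, not merely at the minimizer. Beyond that, the whole proof is triangle inequalities plus bookkeeping so that the constants assemble precisely into \eqref{eq:lips_const}; no step is conceptually hard.
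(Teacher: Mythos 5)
Your reconstruction is correct and follows exactly the argument of the cited source: the paper itself offers no proof of this lemma (it is quoted verbatim from \cite[Lemma 2.2]{ghadimi2018approximation}), and your strong-monotonicity argument for $L_y$, the three-factor telescoping with the resolvent identity for $L$, and the two-piece splitting through $\Bgrd f(x_1,y^\star(x_2))$ for $L_f$ assemble the constants precisely as in \eqref{eq:lips_const}. The one point worth flagging is that the final step uses Lipschitz continuity of $\grd_x f(\cdot,y)$ in $x$ with constant $L_{fx}$, which \Cref{ass:f} as stated only provides for the $y$-variable --- this is an imprecision inherited from the cited reference rather than a gap in your argument, and you correctly identify that this continuity is what the leading $L_{fx}$ term in $L_f$ accounts for.
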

\noindent The above properties will be pivotal in establishing the convergence of {\sf TTSA}. 
First, we note that \eqref{eq:lip:f} implies that the composite function $\ell(x)$ is weakly convex with a modulus that is at least $(-L_f)$.
% We remark that alternative forms of approximation to $\grd \ell(x)$ may be used to replace the gradient surrogate in \eqref{eq:bar:gradient}. 
Furthermore, \eqref{eq:property:hf:bound} in \Cref{ass:stoc} combined with \Cref{lem:lips} leads to the following estimate:
\beq \label{eq:hfbd}
\EE[ \| h_f^k \|^2 | {\cal F}_k' ] \leq \tilde\sigma_f^2 + 3 b_k^2 + 3 L^2 \| y^{k+1} - y^\star(x^k) \|^2, \quad \tilde\sigma_f^2 \eqdef \sigma_f^2 + 3 \sup_{x \in X} \| \grd \ell(x) \|^2.
\eeq
Throughout, we assume $\tilde\sigma_f^2$ is bounded, e.g., it can be satisfied if $X$ is bounded, or if $\ell(x)$ has bounded gradient.\vspace{-.2cm}
	
\subsection{Applications} \label{sec:app} Practical problems such as hyperparameter optimization \cite{franceschi2018bilevel,mehra2019penalty,shaban2019truncated}, Stackelberg games \cite{stackelberg} can be cast into special cases of bilevel optimization problem \eqref{eq:bilevel}. To be specific, we discuss three applications of the bilevel optimization problem \eqref{eq:bilevel} below. 
	
\vspace{-0.2cm}
\paragraph{Model-Agnostic Meta-Learning}
An important paradigm of machine learning is to find model that adapts to multiple training sets in order to achieve the best performance for individual tasks. Among others, a popular formulation is model-agnostic meta learning (MAML) \cite{finn2017model} which minimizes an outer objective of empirical risk on all training sets, and the inner objective is the one-step projected gradient. Let $D^{(j)} = \{z_i^{(j)}\}_{i=1}^n$ be the $j$-th ($j\in [J]$) training set with sample size $n$, MAML can be formulated as a bilevel optimization problem \cite{rajeswaran2019meta}:
\beqq
	\begin{array}{rl} 
		\ds \min_{\theta \in \Theta} & \ds \sum_{j=1}^J \sum_{i=1}^n\bar{\ell}\big (\theta^{\star (j)}(\theta),z_i^{(j)}\big ) \\
		\mbox{subject to} & \ds \theta^{\star (j)}(\theta) \in \argmin_{\theta^{(j)}}~\biggl\{  {\ds\sum_{i=1}^n}\langle \theta^{(j)}, \nabla_\theta\bar{\ell}(\theta, z_{i}^{(j)})\rangle + \frac{\lambda}{2} \| \theta^{(j)} - \theta \|^2
		\biggr\}.
	\end{array}
\eeqq
Here $\theta$ is the shared model parameter, $\theta^{(j)}$ is the adaptation of $\theta$ to the $j$th training set, and $\bar{\ell}$ is the loss function. It can be checked that the inner problem is strongly convex. We have \Cref{ass:f}, \ref{ass:g}, \ref{ass:stoc} for stochastic gradient updates, assuming $\bar{\ell}$ is sufficiently regular, and the losses are the logistic loss. Moreover, \cite{finn2019online} proved that, assuming $\lambda$ is sufficiently large and $\bar{\ell}$ is strongly convex, the outer problem is also strongly convex. In fact, \cite{raghu2019rapid} demonstrated that an algorithm with no inner loop achieves a comparable performance to \cite{finn2019online}.
	
\vspace{-0.2cm}
\paragraph{Policy Optimization} 
Another application of the bilevel optimization problem is the policy optimization problem, particularly when combined with an actor-critic scheme. The optimization involved is to find an optimal policy to maximize the expected (discounted) reward. Here, the `actor' serves as the outer problem and the `critic' serves as the inner problem which evaluates the performance of the `actor' (current policy).
To avoid redundancy, we refer our readers to \S\ref{zwsec:RL} where we present a detailed case study. The latter will also shed lights on the generality of our proof techniques for {\sf TTSA} algorithms.

\paragraph{Data hyper-cleaning}
The data hyper-cleaning problem trains a classifier with a dataset of randomly corrupted labels \cite{shaban2019truncated}. The problem formulation is given below: 
\begin{align}\label{eq:clean}
\textstyle \min_{x \in \mathbb{R}^{d_1}}  &~~ \textstyle \ell(x) := \sum_{i \in \mathcal{D}_{\text{val}}} L(a_i^\top y^\star(x), b_i) \\
\text{s.t.} &~~ \textstyle y^\star(x) = \argmin_{y \in \mathbb{R}^{d_2}}  \big\{ \lambda \| y \|^2 + \sum_{i\in \mathcal{D}_{\text{tr}}} \sigma(x_i) L(a_i^\top y , b_i) \big\} .\nonumber
\end{align}
In this problem, we have $d_1 = |\mathcal{D}_{\rm tr}|$, and $d_2$ is the dimension of the classifier $y$;
$(a_i,b_i)$ is the $i$th data point; $L(\cdot)$ is the loss function; $x_i$ is the parameter that determines the weight for the $i$th data sample; $\sigma: \mathbb{R} \rightarrow \mathbb{R}_+$ is the weight function; $\lambda >0$ is a regularization parameter; $\mathcal{D}_{\rm val}$ and $\mathcal{D}_{\rm tr}$ are validation and training sets, respectively.
Here, the inner problem finds the classifier $y^\ast(x)$ with the training set ${\cal D}_{\sf tr}$, while the outer problem finds the best weights $x$ with respect to the validation set ${\cal D}_{\sf val}$.

Before ending this subsection, let us mention that, we do not aware any general sufficient conditions that can be used to verify whether the outer function $\ell(x)$ is (strongly) convex or not. To our knowledge, the convexity of $\ell(x)$ has to be verified in a case-by-case manner; please see \cite[Appendix B3]{finn2019online} for how this can be done.
 
\section{Main Results} \label{sec:main}
This section presents the convergence results for {\sf TTSA} algorithm for \eqref{eq:bilevel}. We first summarize a list of important constants in \Cref{tab:constant} to be used in the forthcoming analysis. Next, we discuss a few concepts pivotal to our analysis.
	% To quantify convergence, we rely on a few metrics developed as follows.
	
\vspace{-0.2cm}
\paragraph{Tracking Error} {\sf TTSA}   tackles the inner and outer problems simultaneously using single-loop updates. Due to the coupled nature of the inner and outer problems, in order to obtain an upper bound  on the \emph{optimality gap} $\Delta_x^k := \EE[\| x^k - x^\star \|^2]$, where $x^\star$ is an optimal solution to \eqref{eq:bilevel}, we need to estimate the \emph{tracking error} defined as  
\beq  
\Delta_y^k := \EE[ \| y^k - y^\star(x^{k-1}) \|^2 ] \quad \text{where} \quad y^\star(x)   = \argmin_{y \in \RR^{d_2}} ~g(x, y).
\eeq
For any $x \in X$, $y^\star(x)$ is well defined since the inner problem is strongly convex due to \Cref{ass:g}. By definition, $\Delta_y^k$ quantifies how close $y^k$ is from  the optimal solution to inner problem given $x^{k-1}$. 
	
\vspace{-0.2cm}
\paragraph{Moreau Envelop} 
Fix $\rho > 0$, define the Moreau envelop and proximal map as 
\beq \label{eq:moreaum}
	\Phi_{1/\rho} (z) := \min_{x \in X} \big\{ \ell(x) + (\rho/2) \|x-z\|^2 \big\},~~\hat{x}(z) : =  \argmin_{x \in X} \big\{ \ell(x) + (\rho/2) \|x-z\|^2 \big\}.
\eeq
For any $\epsilon > 0$, $x^k \in X$ is said to be an \emph{$\epsilon$-nearly stationary solution} \cite{davis2018stochastic} if  $x^k$ is an approximate fixed point of $\{ \hat{x} - {\rm I} \}(\cdot)$, where
\beq \label{eq:nearstat}
\tilde{\Delta}_x^k := \EE[\| \hat{x}( x^k ) - x^k \|^2] \leq  \rho^{-2} \cdot \epsilon  .
\eeq 
We observe that if $\epsilon = 0$, then $x^k \in X$ is a stationary solution to \eqref{eq:bilevel} satisfying the second condition in \eqref{eq:opt_cond}. As we will demonstrate next, the \emph{near-stationarity} condition \eqref{eq:nearstat} provides an apparatus to quantify the finite-time convergence of {\sf TTSA} in the case when $\ell(x)$ is non-convex. 

\begin{table}
	\centering
	\caption{Summary of the Constants for \Cref{sec:main}} \label{tab:constant}
	{\footnotesize \begin{tabular}{l l l }
		\toprule
		\bfseries Constant & \bfseries Description & \bfseries Reference \\
		\midrule 
		$L_{fx}, L_{fy}$ & Lipschitz constants for $\grd_x f(x,\cdot)$, $\grd_y f(x,\cdot)$ w.r.t.~$y$, resp. & \Cref{ass:f}\\
		$\bar{L}_{fy}$ & Lipschitz constants for $\grd_y f(\cdot,y)$ w.r.t.~$x$ & \Cref{ass:f} \\
		$C_{fy}$ & Upper bound on $\| \grd_y f(x,y) \|$ & \Cref{ass:f} \\
		$L_g$ & Lipschitz constant of $\grd_y g(x,\cdot)$ & \Cref{ass:g} \\
		$\mu_g$ & Strong convexity modulus $g(x,\cdot)$ w.r.t.~$y$ & \Cref{ass:g} \\
		$L_{gxy}, L_{gyy}$ & Lipschitz constants of $\grd_{xy}^2g(x,\cdot), \grd_{yy}^2 g(x,\cdot)$ w.r.t.~$y$, resp. & \Cref{ass:g} \\
		$\bar{L}_{gxy}, \bar{L}_{gyy}$ & Lipschitz constants of $\grd_{xy}^2 g(\cdot,y), \grd_{yy}^2 g(\cdot,y)$ w.r.t.~$x$, resp. & \Cref{ass:g} \\
		$C_{gxy}$ & Upper bound on $\| \grd_{xy}^2 g(x,y) \|$ & \Cref{ass:g} \\
		$b_k$ & Bound on the bias of $h_f^k$ at iteration $k$ & \Cref{ass:stoc} \\ 
		$\sigma_g^2, \sigma_f^2$ & Variance of stochastic estimates $h_g^k$, $h_f^k$, resp. & \Cref{ass:stoc} \\
		$\tilde\sigma_f^2$ & Constant term on the bound for $\EE[ \| h_f^k\|^2 ]$ & \eqref{eq:hfbd} \\
		$L$ & Difference between $\Bgrd f(x,y)$, $\grd \ell(x)$ w.r.t.~$\| y^\star(x) - x \|$ & \Cref{lem:lips} \\
		$L_y$ & Lipschitz constant of $y^\star(x)$ & \Cref{lem:lips} \\
		$L_f$ & Lipschitz constant of $\grd \ell(x)$ & \Cref{lem:lips} \\
		\bottomrule
	\end{tabular} }
\end{table}
	 
\subsection{Strongly Convex Outer Objective Function} \label{sec:scsc}
Our first result considers the instance of \eqref{eq:bilevel} where $\ell(x)$ is strongly convex. We obtain:
\begin{Theorem}\label{th:sc:uc}
Under \Cref{ass:f}, \ref{ass:g}, \ref{ass:stoc}. Assume that $\ell(x)$ is weakly convex with a modulus $\mu_\ell>0$ (i.e., it is strongly convex), and the step sizes satisfy\begin{subequations} \label{eq:stepsize}
\begin{align}
& \alpha_k \leq {\rm c}_0 \,  \beta_k^{3/2},~\beta_k \leq {\rm c}_1 \alpha_k^{2/3 }, ~ \frac{ \beta_{k-1} }{ \beta_k } \leq 1 + \beta_k \mu_g / 8,~\frac{\alpha_{k-1} }{ \alpha_k } \leq 1 + 3 \alpha_k \mu_\ell / 4, \label{eq:stepsize0} \\ 
& \alpha_k \leq \frac{1}{\mu_\ell}, ~ \beta_k \leq \min \left\{ \frac{1}{ \mu_g}, \frac{ \mu_g}{ L_g^2(1+\sigma_g^2)}, \frac{ \mu_g^2 }{48 {\rm c}_0^2 L^2 L_y^2 } \right\},~ 8 \mu_\ell \alpha_k \leq \mu_g \beta_k,~ \forall~k \geq 0, \label{eq:stepsize1}
\end{align}
\end{subequations}
where the constants $L, L_y$ were defined in \Cref{lem:lips} and ${\rm c}_0, {\rm c}_1 > 0$ are free parameters. 
If the bias is bounded as $b_k^2 \leq \tilde{\rm c}_b \alpha_{k+1}$, then for any $k \geq 1$, the {\sf TTSA} iterates satisfy 
\begin{align} \label{eq:thm31bd}
\begin{split}
& \Delta_x^k \lesssim \prod_{i=0}^{k-1} (1 - \alpha_i \mu_\ell ) \big[ \Delta_x^0 + \frac{L^2}{\mu_\ell^2} \Delta_y^0 \big] + \frac{ {\rm c}_1 L^2 }{ \mu_\ell^2 } \Big[ \frac{\sigma_g^2}{\mu_g} + \frac{ {\rm c}_0^2 L_y^2 }{\mu_g^2} \tilde{\sigma}_f^2 \Big] \alpha_{k-1}^{2/3}, \\
& \Delta_y^k \lesssim \prod_{i=0}^{k-1} (1 - \beta_i \mu_g / 4) \Delta_y^0 + \Big[ \frac{\sigma_g^2}{\mu_g} + \frac{{\rm c}_0^2 L_y^2}{\mu_g^2} \tilde{\sigma_f}^2 \Big] \beta_{k-1} ,
\end{split}
\end{align}
where the symbol $\lesssim$ denotes that the numerical constants are omitted (see \Cref{sec:gen_ana}).
\end{Theorem}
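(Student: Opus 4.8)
The plan is to derive a pair of \emph{coupled one-step contraction inequalities} for the optimality gap $\Delta_x^k$ and the tracking error $\Delta_y^k$, and then solve the resulting two-dimensional linear recursion using the step-size conditions \eqref{eq:stepsize}. First I would handle the inner update \eqref{eq:y:update}. Writing $y^{k+1} - y^\star(x^k) = \bigl(y^k - \beta_k h_g^k\bigr) - y^\star(x^k)$, expand the square, take conditional expectation on ${\cal F}_k$, and use \eqref{eq:property:hg1}--\eqref{eq:property:hg} together with the strong convexity and Lipschitz smoothness of $g(x^k,\cdot)$ (\Cref{ass:g}) to obtain a bound of the form $\EE[\|y^{k+1}-y^\star(x^k)\|^2 \mid {\cal F}_k] \le (1-\mu_g\beta_k/2)\|y^k - y^\star(x^k)\|^2 + {\cal O}(\beta_k^2\sigma_g^2)$; the choice $\beta_k \le \mu_g / (L_g^2(1+\sigma_g^2))$ in \eqref{eq:stepsize1} is exactly what absorbs the $\|\grd_y g\|^2$ term in the variance bound \eqref{eq:property:hg}. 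Then I must replace $y^\star(x^k)$ by $y^\star(x^{k-1})$ to match the definition of $\Delta_y^{k}$: using $\|y^\star(x^k)-y^\star(x^{k-1})\| \le L_y\|x^k - x^{k-1}\| \le L_y \alpha_{k-1}\|h_f^{k-1}\|$ from \eqref{eq:lip:f:bar} and then \eqref{eq:hfbd}, this contributes a term proportional to $\alpha_{k-1}^2(\tilde\sigma_f^2 + L^2\Delta_y^{k})$, and a Young's-inequality split together with $\beta_k \le \mu_g^2/(48{\rm c}_0^2 L^2 L_y^2)$ and $\alpha_{k-1}\le {\rm c}_0\beta_{k-1}^{3/2}$ keeps the $\Delta_y$ coefficient inside the contraction. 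The net result is
\begin{equation}\label{eq:planY}
\Delta_y^{k+1} \le \bigl(1 - \tfrac{\mu_g\beta_k}{4}\bigr)\Delta_y^{k} + c\,\beta_k^2\sigma_g^2 + c'\,\alpha_{k-1}^2\bigl(\tilde\sigma_f^2 + \Delta_x^{k}\bigr)
\end{equation}
for suitable numerical constants $c,c'$ (the $\Delta_x^k$ term coming from bounding $\|x^k-x^{k-1}\|$ more carefully when $\|h_f^{k-1}\|^2$ is re-expressed via \eqref{eq:hfbd}).

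Next I would treat the outer update \eqref{eq:x:update}. By nonexpansiveness of $P_X$ and the variational characterization of $x^\star$ (the second inequality in \eqref{eq:opt_cond}), expand $\|x^{k+1}-x^\star\|^2 \le \|x^k - \alpha_k h_f^k - x^\star\|^2$, take expectation over ${\cal F}_k'$, and use \eqref{eq:property:hg1} and \eqref{eq:property:hf:bound}. The cross term $\langle \Bgrd f(x^k,y^{k+1}) + B_k,\, x^k - x^\star\rangle$ is split: the $B_k$ part contributes $\le 2b_k\|x^k-x^\star\|$, controlled by $b_k^2 \le \tilde{\rm c}_b\alpha_{k+1}$ and Young; the $\Bgrd f(x^k,y^{k+1}) - \grd\ell(x^k)$ part is $\le L\|y^{k+1}-y^\star(x^k)\|$ by \eqref{eq:lip:f:bar}, again split by Young and absorbed; and the remaining $\langle \grd\ell(x^k), x^k-x^\star\rangle$ is bounded below by $\mu_\ell\|x^k-x^\star\|^2 + (\ell(x^k)-\ell(x^\star))\ge \mu_\ell\|x^k-x^\star\|^2$ using strong convexity of $\ell$. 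Combining with $\EE[\|h_f^k\|^2 \mid {\cal F}_k'] \le \tilde\sigma_f^2 + 3b_k^2 + 3L^2\|y^{k+1}-y^\star(x^k)\|^2$ from \eqref{eq:hfbd}, and the condition $\alpha_k \le 1/\mu_\ell$, yields
\begin{equation}\label{eq:planX}
\Delta_x^{k+1} \le \bigl(1 - \alpha_k\mu_\ell\bigr)\Delta_x^{k} + c_1 \frac{L^2}{\mu_\ell}\,\alpha_k\,\EE[\|y^{k+1}-y^\star(x^k)\|^2] + c_2\,\alpha_k^2\tilde\sigma_f^2 + c_3\,\tilde{\rm c}_b\,\alpha_k\alpha_{k+1},
\end{equation}
and here I need $\EE[\|y^{k+1}-y^\star(x^k)\|^2]$, not $\Delta_y^{k+1}$, so I would keep the intermediate inequality from the inner analysis (before shifting $x^k\to x^{k-1}$) and plug that in, producing a clean coupling: $\Delta_x^{k+1}$ controlled by $\Delta_x^k$, $\Delta_y^k$, and noise.

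The final and most delicate step is solving the coupled system \eqref{eq:planY}--\eqref{eq:planX}. The key structural observation is that the timescale separation $\alpha_k / \beta_k \to 0$ (quantitatively $8\mu_\ell\alpha_k \le \mu_g\beta_k$) makes the off-diagonal coupling weak enough that the system behaves like a perturbed scalar recursion. Concretely, I would form the Lyapunov-type combination $V^k := \Delta_x^k + \tfrac{c_1 L^2}{\mu_\ell \mu_g}\cdot\tfrac{\alpha_{k-1}}{\beta_{k-1}}\cdot(\text{const})\,\Delta_y^k$ — or, more in line with the statement, invoke the "generic results for solving coupled inequalities" promised in the introduction (\Cref{sec:gen_ana}) — to show that $\Delta_y^k$ contracts at the faster rate $\prod(1-\beta_i\mu_g/4)$ up to a residual ${\cal O}(\sigma_g^2/\mu_g + {\rm c}_0^2 L_y^2\tilde\sigma_f^2/\mu_g^2)\beta_{k-1}$, and then, feeding this back, that $\Delta_x^k$ contracts at $\prod(1-\alpha_i\mu_\ell)$ with residual of order $\tfrac{{\rm c}_1 L^2}{\mu_\ell^2}[\sigma_g^2/\mu_g + {\rm c}_0^2 L_y^2\tilde\sigma_f^2/\mu_g^2]\alpha_{k-1}^{2/3}$ — the exponent $2/3$ appearing precisely because $\alpha_{k-1}$ multiplies the $\Delta_y$ residual, which is of size $\beta_{k-1}\sim \alpha_{k-1}^{2/3}$ via $\alpha_k\le{\rm c}_0\beta_k^{3/2}$. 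The ratio conditions $\beta_{k-1}/\beta_k \le 1+\beta_k\mu_g/8$ and $\alpha_{k-1}/\alpha_k \le 1+3\alpha_k\mu_\ell/4$ are what guarantee that these per-step residuals telescope into the stated steady-state bounds rather than blowing up. I expect the main obstacle to be exactly this last bookkeeping: carefully choosing the Lyapunov weights and verifying that every Young's-inequality constant is compatible with the contraction factors $1-\alpha_k\mu_\ell$ and $1-\mu_g\beta_k/4$ simultaneously, so that the two residual scales ($\beta_{k-1}$ for the inner, $\alpha_{k-1}^{2/3}$ for the outer) emerge cleanly; the algebra is routine but the interdependence of the six step-size constraints in \eqref{eq:stepsize} makes it error-prone, which is presumably why the fully general coupled-inequality lemma is factored out into its own section.
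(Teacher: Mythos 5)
Your overall strategy---two coupled one-step contractions, with the timescale separation $\alpha_k \le {\rm c}_0\beta_k^{3/2}$ responsible for the $\alpha_{k-1}^{2/3}$ residual---is the paper's, and your two one-step inequalities are essentially \Cref{lemma:refined_dy} and \Cref{lemma:refined_dx}. Two points of divergence are worth flagging. First, your inner recursion carries an extra $\Delta_x^k$ term that the paper's does not: \eqref{eq:hfbd} bounds $\EE[\|h_f^{k-1}\|^2]$ by $\tilde\sigma_f^2 + 3b_{k-1}^2 + 3L^2\Delta_y^{k}$ only, because the gradient contribution is folded into the constant $\tilde\sigma_f^2 = \sigma_f^2 + 3\sup_{x\in X}\|\grd\ell(x)\|^2$, which is assumed finite. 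Consequently the paper's system is only one-way coupled: the $\Delta_y$ recursion is self-contained, is solved first in closed form (\Cref{lemma:refined_dy}), and its solution is substituted into the $\Delta_x$ recursion. No Lyapunov combination is needed. Your two-way coupled version can still be closed, since the product of off-diagonal coefficients is $O(\alpha^3)$ against a contraction product of order $\alpha\beta$ and $\alpha^2 \ll \beta$ under \eqref{eq:stepsize1}, but it is strictly harder and would change the constants in \eqref{eq:thm31bd}.

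Second, and more substantively: of the two routes you propose for the final assembly, the one that invokes the ``generic results for solving coupled inequalities'' would not deliver \eqref{eq:thm31bd}. That lemma (\Cref{lem:recur_lem}) assumes constant coefficients and yields Ces\`aro-averaged bounds; in the paper it is used only for the weakly convex and convex cases with constant step sizes. \Cref{th:sc:uc} is a last-iterate bound with diminishing steps, and the paper instead controls the convolution sums $\sum_{j}\alpha_j\prod_{\ell=j+1}^{k}(1-\alpha_\ell\mu_\ell)\Delta_y^{j+1}$ directly via the weighted-summation lemmas: \Cref{lem:aux2} with $q=5/3$ produces the $\alpha_k^{2/3}$ term exactly as you predict, and \Cref{lem:aux3}, whose hypothesis $2a\gamma_j\le b\rho_j$ is precisely the condition $8\mu_\ell\alpha_k\le\mu_g\beta_k$, handles the transient cross term $\sum_j\alpha_j\prod_{\ell=j+1}^k(1-\alpha_\ell\mu_\ell)\prod_{i=0}^j(1-\beta_i\mu_g/4)$. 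Your Lyapunov branch with time-varying weight $\alpha_{k-1}/\beta_{k-1}$ is workable and amounts to the same bookkeeping, but it is the part you have not carried out, and the ratio conditions in \eqref{eq:stepsize0} are exactly what make that telescoping (or the equivalent summation lemmas) go through. A minor slip: $\EE[\|y^{k+1}-y^\star(x^k)\|^2]$ \emph{is} $\Delta_y^{k+1}$ under the paper's convention $\Delta_y^k=\EE[\|y^k-y^\star(x^{k-1})\|^2]$, so the distinction you draw in the outer recursion is vacuous.
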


\noindent Notice that the bounds in \eqref{eq:thm31bd} show that the expected optimality gap and tracking error at the $k$th iteration shall compose of a transient and fluctuation terms. For instance, for the bound on $\Delta_x^k$, the first (transient) term decays sub-geometrically as $ \prod_{i=0}^{k-1} (1 - \alpha_i \mu_\ell )$, while the second (fluctuation) term scales as $\alpha_{k-1}^{2/3}$. {\blue Note that if $\alpha_k, \beta_k \to 0$, then the r.h.s.~in \eqref{eq:thm31bd} converges to zero as $k \to \infty$. While for non-vanishing step sizes, the r.h.s.~in \eqref{eq:thm31bd} may not converge to zero.}
	
The conditions in \eqref{eq:stepsize} are satisfied by both \emph{diminishing} and \emph{constant} step sizes. For example, we define the constants:
% we set $\alpha_k = c_\alpha / ( k + k_\alpha), \beta_k = c_\beta / ( k + k_\beta)^{2/3}$ with 
\beq \label{eq:kalpha}
	k_\alpha = \max\Big\{ 35 \Big( \frac{L_g}{\mu_g} \Big)^3 (1 + \sigma_g^2)^{\frac{3}{2}} , \frac{ (512)^{\frac{3}{2}} L^2 L_y^2 }{ \mu_\ell^2} \Big\},~ c_\alpha = \frac{8}{3 \mu_\ell},~
	% \eeq
	% \beq \label{eq:kbeta}
	k_\beta = \frac{1}{4} k_\alpha,~ c_\beta = \frac{32}{3 \mu_g}.
\eeq
Then, for \emph{diminishing step sizes}, we set $\alpha_k = c_\alpha / ( k + k_\alpha), \beta_k = c_\beta / ( k + k_\beta)^{2/3}$, and for \emph{constant step sizes}, we set $\alpha_k = c_\alpha / k_\alpha$, $\beta_k = c_\beta / k_\beta^{2/3}$. Both pairs of the step sizes satisfy \eqref{eq:stepsize} with ${\rm c}_0 = \frac{\mu_g^{3/2}}{\mu_\ell}$, ${\rm c}_1 = 10 \frac{\mu_\ell^{2/3}}{\mu_g} $. For \emph{diminishing step sizes}, \Cref{th:sc:uc} shows the {last iterate convergence} rate for the optimality gap and the tracking error to be ${\cal O}(k^{-2/3})$. 
To compute an $\epsilon$-optimal solution with $\Delta_x^k \leq \epsilon$, the {\sf TTSA} algorithm with diminishing step size requires a total of ${\cal O}( \log(1/\epsilon) /\epsilon^{3/2} )$ calls of stochastic (gradient/Hessian/Jacobian) oracles of both outer  ($f(\cdot,\cdot)$) and inner ($g(\cdot,\cdot)$) functions\footnote{Notice that as we need $b_k = {\cal O}( \sqrt{\alpha_{k+1}})$, from \Cref{lem:hessinv_main}, the polynomial bias decay requires using $\tmax(k) = {\cal O}(1+\log(k))$ samples per iteration, justifying the $\log$ factor in the bound.}.
	
While this is arguably the easiest case of \eqref{eq:bilevel}, we notice that the double-loop algorithm in \cite{ghadimi2018approximation} requires ${\cal O}( 1/ \epsilon )$, ${\cal O}( 1/ \epsilon^2 )$ stochastic oracles for the outer (i.e., $f(\cdot,\cdot)$), inner (i.e., $g(\cdot,\cdot)$) functions, respectively. As such, the {\sf TTSA} algorithm requires less number of stochastic oracles for the inner function. 
	
\subsection{Smooth (Possibly Non-convex) Outer Objective Function} \label{sec:const} 
We focus on the case where $\ell(x)$ is weakly convex. We obtain 
	
\begin{Theorem}\label{th:wc:c}
Under \Cref{ass:f}, \ref{ass:g}, \ref{ass:stoc}, assume that $\ell(\cdot)$ is weakly convex with modulus $\mu_\ell \in \RR$. Let $\Kmax \geq 1$ be the maximum iteration number and set 
\beq \label{eq:stepsize_const}
\alpha = \min\Big\{ \frac{\mu_g^2}{8 L_y L L_g^2(1+\sigma_g^2)},  \frac{1}{4 L_y L} \Kmax^{-3/5} \Big\}, \; \; \beta = \min \Big\{ \frac{\mu_g}{ L_g^2 (1 + \sigma_g^2) }, \frac{2}{\mu_g} \Kmax^{-2/5} \Big\}.
\eeq 
If $b_k^2 \le \alpha$, then for any $\Kmax \geq 1$, the iterates from the {\sf TTSA} algorithm satisfy 
\beqq 
\EE[ \widetilde{\Delta}_x^{\sf K} ] = {\cal O}( \Kmax^{-2/5}),~~
{\EE[\Delta_y^{\sf K+1}]} = {\cal O}( \Kmax^{-2/5}), 
\eeqq
where ${\sf K}$ is an independent uniformly distributed random variable on $\{0,...,\Kmax-1\}$; and we recall $\tilde{\Delta}^k_x:=\| \hat{x}(x^{k} ) - x^{ k}\|^2$. When $\Kmax$ is large and $\mu_\ell <0$, setting $\rho = 2|\mu_\ell|$ yields
\beqq
\EE[ \widetilde{\Delta}_x^{\sf K} ] \lesssim \Big[ L^2 \Big( \Delta^0 + \frac{\sigma_g^2}{\mu_g^2} \Big) + \mu_g \tilde\sigma_f^2  \Big] \frac{ \Kmax^{-\frac{2}{5} } }{ |\mu_\ell|^2 }, \quad \EE[ \Delta_y^{\sf K+1} ] \lesssim \Big[ \frac{\Delta^0}{\mu_g} + \frac{ \sigma_g^2 }{ \mu_g^2 } + \frac{\mu_g \sigma_f^2}{L^2} \Big] \Kmax^{-\frac{2}{5}},
\eeqq
where we defined $\Delta^0 := \max\{ \Phi_{1/\rho}(x^0), \frac{L_y}{L} {\rm OPT}^0, \Delta_y^0\}$, and used the conditions $\alpha < 1$, $\mu_g \ll 1$;  the symbol $\lesssim$ denotes that the numerical constants are omitted (see \Cref{sec:gen_ana}).
\end{Theorem}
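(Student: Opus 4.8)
The plan is to treat the Moreau envelope $\Phi_{1/\rho}(x^k)$ as a stochastic Lyapunov function and to couple its one‑step behaviour with the tracking error $\Delta_y^k$, in the spirit of the weakly‑convex analysis of stochastic methods but adapted to the two‑timescale bilevel iteration; here $\rho$ is taken as in the theorem ($\rho=2|\mu_\ell|$, and any fixed $\rho$ of the same order works), so that $\hat x(\cdot)$ from \eqref{eq:moreaum} is well defined and $\rho\|x^k-\hat x(x^k)\|$ is the near‑stationarity surrogate of \eqref{eq:nearstat}. \emph{Tracking‑error recursion.} From $y^{k+1}=y^k-\beta_k h_g^k$ I expand $\|y^{k+1}-y^\star(x^k)\|^2$, take $\EE[\,\cdot\mid{\cal F}_k]$, and use \Cref{ass:g} (strong convexity and $L_g$‑smoothness of $g(x^k,\cdot)$, $\grd_y g(x^k,y^\star(x^k))=0$) together with \eqref{eq:property:hg}; for $\beta_k\le\mu_g/(L_g^2(1+\sigma_g^2))$ this yields $\EE[\|y^{k+1}-y^\star(x^k)\|^2\mid{\cal F}_k]\le(1-\beta_k\mu_g)\|y^k-y^\star(x^k)\|^2+\beta_k^2\sigma_g^2$. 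Passing from $y^\star(x^k)$ to $y^\star(x^{k-1})$ with Young's inequality (parameter $\sim\beta_k\mu_g$), the Lipschitz bound $\|y^\star(x^k)-y^\star(x^{k-1})\|\le L_y\|x^k-x^{k-1}\|\le L_y\alpha_{k-1}\|h_f^{k-1}\|$ from \Cref{lem:lips} and nonexpansiveness of $P_X$, and \eqref{eq:hfbd}, I arrive at $\Delta_y^{k+1}\le(1-\tfrac14\beta_k\mu_g)\Delta_y^k+{\cal O}\big(\tfrac{L_y^2L^2\alpha_{k-1}^2}{\beta_k\mu_g}\big)\Delta_y^k+{\cal O}\big(\tfrac{L_y^2\alpha_{k-1}^2}{\beta_k\mu_g}\big)(\tilde\sigma_f^2+b_{k-1}^2)+\beta_k^2\sigma_g^2$, where the two‑timescale gap $\alpha\lesssim\beta\mu_g/(L_yL)$ forced by \eqref{eq:stepsize_const} absorbs the spurious ${\cal O}(\alpha^2/\beta)\,\Delta_y^k$ term into the contraction.

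\emph{Moreau descent.} Because $\hat x(x^k)\in X$ is feasible for the $x^{k+1}$‑update and $P_X$ is nonexpansive, $\Phi_{1/\rho}(x^{k+1})\le\ell(\hat x(x^k))+\tfrac\rho2\|\hat x(x^k)-x^k+\alpha_k h_f^k\|^2$; taking $\EE[\,\cdot\mid{\cal F}_k']$ and subtracting $\Phi_{1/\rho}(x^k)=\ell(\hat x(x^k))+\tfrac\rho2\|\hat x(x^k)-x^k\|^2$ gives $\EE[\Phi_{1/\rho}(x^{k+1})\mid{\cal F}_k']\le\Phi_{1/\rho}(x^k)+\rho\alpha_k\langle\Bgrd f(x^k,y^{k+1})+B_k,\hat x(x^k)-x^k\rangle+\tfrac\rho2\alpha_k^2\,\EE[\|h_f^k\|^2\mid{\cal F}_k']$. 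I split the inner product into $\langle\grd\ell(x^k),\hat x(x^k)-x^k\rangle$, which by \eqref{eq:weakly:convex} is $\le\ell(\hat x(x^k))-\ell(x^k)-\mu_\ell\|\hat x(x^k)-x^k\|^2$, and a remainder $\langle\Bgrd f(x^k,y^{k+1})-\grd\ell(x^k)+B_k,\hat x(x^k)-x^k\rangle$ bounded via $\|\Bgrd f(x^k,y^{k+1})-\grd\ell(x^k)\|\le L\|y^{k+1}-y^\star(x^k)\|$ (\Cref{lem:lips}) and Young; since $\hat x(x^k)$ minimizes $\ell(x)+\tfrac\rho2\|x-x^k\|^2$ over $X$ we also have $\ell(\hat x(x^k))-\ell(x^k)\le-\tfrac\rho2\|\hat x(x^k)-x^k\|^2$, and combining with $-\rho\alpha_k\mu_\ell\|\hat x(x^k)-x^k\|^2$ produces the negative term $-\rho\big(\tfrac\rho2-|\mu_\ell|\big)\alpha_k\,\tilde\Delta_x^k$. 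Controlling $\EE[\|h_f^k\|^2\mid{\cal F}_k']$ by \eqref{eq:hfbd}, using $b_k^2\le\alpha$, and choosing the Young parameters small relative to $\rho(\tfrac\rho2-|\mu_\ell|)$ leaves the one‑step inequality $\EE[\Phi_{1/\rho}(x^{k+1})]\le\EE[\Phi_{1/\rho}(x^k)]-\Theta(\alpha_k)\,\EE[\tilde\Delta_x^k]+{\cal O}(\alpha_k L^2)\,\EE[\Delta_y^{k+1}]+{\cal O}(\alpha_k^2\tilde\sigma_f^2)+{\cal O}(\alpha_k^2)$.

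\emph{Solving the coupled system and fixing the exponents.} With the constant step sizes of \eqref{eq:stepsize_const}, I iterate the tracking‑error recursion to bound $\sup_k\EE[\Delta_y^k]$ and $\tfrac{1}{\Kmax}\sum_{k<\Kmax}\EE[\Delta_y^{k+1}]$ by ${\cal O}\big(\Delta^0(1-\tfrac18\beta\mu_g)^{\Kmax}\big)+{\cal O}\big(\tfrac{\beta\sigma_g^2}{\mu_g}+\tfrac{L_y^2\alpha^2\tilde\sigma_f^2}{\beta^2\mu_g^2}\big)$, then telescope the Moreau descent over $k=0,\dots,\Kmax-1$, divide by $\Kmax$, and use $\inf_X\Phi_{1/\rho}\ge\inf_X\ell$ and $\Phi_{1/\rho}(x^0)\le\ell(x^0)$ to fold the initial term into $\Delta^0$; this gives, for ${\sf K}$ uniform on $\{0,\dots,\Kmax-1\}$, $\EE[\tilde\Delta_x^{\sf K}]\lesssim\tfrac{\Phi_{1/\rho}(x^0)-\inf_X\Phi_{1/\rho}}{\alpha\Kmax}+\tfrac{L^2}{\rho(\rho/2-|\mu_\ell|)}\cdot\tfrac{1}{\Kmax}\sum_k\EE[\Delta_y^{k+1}]+\tfrac{\alpha\tilde\sigma_f^2}{\rho(\rho/2-|\mu_\ell|)}$. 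Writing $\alpha=\Kmax^{-p}$, $\beta=\Kmax^{-q}$, the scales $1/(\alpha\Kmax)$, $\beta$ and $\alpha^2/\beta^2$ equalise exactly at $p-1=-q$, $2p=3q$, i.e.\ $p=\tfrac35$, $q=\tfrac25$, with common rate $\Kmax^{-2/5}$ — precisely the assignment in \eqref{eq:stepsize_const} once $\Kmax$ is large enough that the $\Kmax^{-3/5}$, $\Kmax^{-2/5}$ branches activate. Substituting it, and carrying the dependence of the constants (with $\rho\sim|\mu_\ell|$, so the prefactor behaves like $1/|\mu_\ell|^2$) on $\mu_g$, $L$, $\sigma_g$, $\tilde\sigma_f$ and using $\alpha<1$, $\mu_g\ll1$, produces both $\EE[\tilde\Delta_x^{\sf K}]=\EE[\Delta_y^{\sf K+1}]={\cal O}(\Kmax^{-2/5})$ and the refined $\lesssim$‑bounds stated, with the small‑$\Kmax$ regime covered trivially because $\Kmax^{-2/5}$ is then bounded away from zero.

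\emph{Main obstacle.} The delicate point is the Moreau‑descent step: producing a genuinely negative $-\Theta(\alpha_k)\tilde\Delta_x^k$ coefficient in the presence of (i) the projection onto $X$, (ii) $h_f^k$ evaluated at $y^{k+1}$ instead of $y^\star(x^k)$, (iii) the per‑iteration bias $B_k$ that decays only polynomially, and (iv) $\ell$ being merely weakly convex — so the argument must go through optimality of the proximal point $\hat x(x^k)$ and the right size of $\rho$, rather than any contraction of $\ell$ itself. Tied to it is the coupled‑inequality accounting: the $3L^2\|y^{k+1}-y^\star(x^k)\|^2$ term in \eqref{eq:hfbd} feeds the tracking‑error recursion back into itself through the $x$‑increment, so the recursion closes only because $\alpha_k/\beta_k\to0$; the generic results for coupled inequalities in \Cref{sec:gen_ana} and the variance/bias bounds of \Cref{lem:hessinv_main} then take care of the routine estimates.
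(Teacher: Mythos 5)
Your proposal is correct in substance and reaches the right exponents, but it takes a genuinely different route from the paper. The paper never writes a single self-contained Moreau descent: it first builds a coupled system in \emph{three} quantities --- the objective gap ${\rm OPT}^k$ (via the $L_f$-smoothness of $\ell$, \Cref{lemma:x:descent_n}), the iterate movement $\EE[\|x^{k+1}-x^k\|^2]$, and the tracking error $\Delta_y^k$, where the $y$-recursion is coupled to $\EE[\|x^k-x^{k-1}\|^2]$ rather than closed on itself --- solves it with the generic \Cref{lem:recur_lem} to get $\frac1K\sum_k\EE[\|x^k-x^{k-1}\|^2]={\cal O}(K^{-6/5})$ and $\frac1K\sum_k\Delta_y^k={\cal O}(K^{-2/5})$, and only then telescopes the Moreau estimate of \Cref{lem:moreau}, whose uncontrolled positive term $\frac{5\rho}{2}\EE[\|x^{k+1}-x^k\|^2]$ is paid for by the previously derived bound. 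You instead close the $\Delta_y$ recursion on itself by substituting \eqref{eq:hfbd} (exactly as in the strongly convex \Cref{lemma:refined_dy}) and absorb the $x$-increment into $\alpha^2\EE[\|h_f^k\|^2]$ inside the Moreau descent, so the intermediate coupled system and the ${\rm OPT}^k$ Lyapunov function disappear entirely. Both arguments are valid under the same step-size ratio $\alpha\lesssim\mu_g\beta/(L_yL)$; your version is more compact for this theorem, while the paper's version reuses the $({\rm OPT}^k,\EE[\|x^{k+1}-x^k\|^2],\Delta_y^k)$ system verbatim to prove \Cref{cor:c:c}, which your single-Lyapunov route would not give for free.

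One point to tighten: with the paper's normalization of weak convexity in \eqref{eq:weakly:convex} (modulus $\mu_\ell\|w-v\|^2$, no factor $1/2$), your chain $\langle\grd\ell(x^k),\hat x^k-x^k\rangle\le\ell(\hat x^k)-\ell(x^k)-\mu_\ell\|\hat x^k-x^k\|^2\le-(\tfrac\rho2+\mu_\ell)\|\hat x^k-x^k\|^2$ yields a coefficient $\rho(\tfrac\rho2-|\mu_\ell|)$ that is \emph{exactly zero} at $\rho=2|\mu_\ell|$, and you later divide by it. Your hedge ``any fixed $\rho$ of the same order works'' does rescue the argument (e.g.\ $\rho=4|\mu_\ell|$ keeps the $1/|\mu_\ell|^2$ prefactor), but then the quantity $\tilde\Delta_x^k$ you bound is the proximal residual for a different $\rho$ than the one in the theorem statement; either state the result for that $\rho$, or use the $\tfrac{\mu_\ell}{2}\|w-v\|^2$ normalization that the paper's own \Cref{lem:moreau} proof implicitly adopts, which makes the coefficient $\tfrac{\rho+\mu_\ell}{2}=\tfrac{|\mu_\ell|}{2}>0$ at $\rho=2|\mu_\ell|$.
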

\vspace{-0.1cm}

The above result uses constant step sizes determined by the maximum number of iterations, $\Kmax$. Here we set the  step sizes as $\alpha_k \asymp \Kmax^{-3/5}$  and $\beta_k  \asymp \Kmax^{-2/5}$.
Similar to the previous case of strongly convex outer objective function,  $\alpha_k / \beta_k $ converges to zero as $\Kmax$ goes to infinity. Nevertheless, \Cref{th:wc:c} shows that {\sf TTSA} requires ${\cal O}(\epsilon^{-5/2} \log(1/\epsilon) )$ calls of stochastic oracle for sampled gradient/Hessian to find an $\epsilon$-nearly stationary solution. In addition, it is worth noting that when $X  =  \RR^{d_1}$, \Cref{th:wc:c} implies that {\sf TTSA}   achieves   $\mathbb{E} [ \|\nabla \ell(  x^{\sf K} )\|^2 ] = \mathcal{O}(\Kmax^{-2/5})$ \cite[Sec.~2.2]{davis2018stochastic}, i.e., $x^{\sf K}$ is an $\mathcal{O} ( \Kmax^{-2/5})$-approximate (near) stationary point of $\ell(x)$ in expectation.
	
Let us compare our sampling complexity bounds to the double loop algorithm in \cite{ghadimi2018approximation}, which requires ${\cal O}(\epsilon^{-3})$ (resp.~{${\cal O}(\epsilon^{-2})$) stochastic oracle calls for the inner problem (resp.~outer problem), to reach an $\epsilon$-stationary solution. The sample complexity of {\sf TTSA} yields a tradeoff for the inner and outer stochastic oracles. We also observe that {a trivial extension to a single-loop algorithm results in a {\it constant} error bound}\footnote{To see this, the readers are referred to \cite[Theorem 3.1]{ghadimi2018approximation}. If a single inner iteration is performed, $t_k=1$, so $\bar{A}^k\ge \|y^0-y^{\star}(x^k)\|$ which is a constant. Then the r.h.s.~of (3.70), (3.73), (3.74) in \cite[Theorem 3.1]{ghadimi2018approximation} will all have a constant term.}. Finally, we can extend \Cref{th:wc:c} to the case where $\ell(\cdot)$ is a convex function.
\begin{Corollary}\label{cor:c:c}
Under \Cref{ass:f}, \ref{ass:g}, \ref{ass:stoc} and assume that $\ell(x)$ is weakly convex with modulus $\mu_\ell \geq 0$. Consider \eqref{eq:bilevel} with $X \subseteq \RR^{d_1}$, $D_x = \sup_{ x,x' \in X} \|x - x' \| < \infty$. Let $\Kmax \geq 1$ be the maximum iteration number and set
\beq \label{eq:stepsize_cvx}
\alpha = \min\Big\{ \frac{\mu_g^2}{8 L_y L L_g^2(1+\sigma_g^2)}, \frac{1}{4 L_y L} \Kmax^{-3/4} \Big\} , \quad \beta = \min \Big\{ \frac{\mu_g}{ L_g^2 (1 + \sigma_g^2) },  \frac{2}{\mu_g} \Kmax^{-1/2} \Big\}.
\eeq
If $b_k \leq c_b \Kmax^{-1/4}$, then for large $K$, the {\sf TTSA} algorithm satisfies
\beqq 
\EE[ \ell(x^{\sf K})-\ell(x^{\star})]  = {\cal O}(\Kmax^{-1/4}),\quad \EE[\Delta_y^{\sf K+1}] = {\cal O}(\Kmax^{-1/2}), 
\eeqq
where ${\sf K}$ is an independent uniform random variable on $\{0,...,\Kmax-1\}$. By convexity, the above implies $\EE[ \ell( \frac{1}{\Kmax}\sum_{k=1}^{\Kmax} x^k ) - \ell(x^\star) ] = {\cal O}(\Kmax^{-1/4})$.
\end{Corollary}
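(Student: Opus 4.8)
Since $\ell$ is now convex ($\mu_\ell\ge 0$) and $X$ has finite diameter $D_x$, the plan is to track the outer \emph{function-value gap} $\ell(x^k)-\ell(x^\star)$ by a standard averaging argument for projected stochastic gradient descent, with the inner problem handled exactly through the tracking error $\Delta_y^k$ as in the proof of \Cref{th:wc:c}. First I would derive the one-step outer inequality: from \eqref{eq:x:update}, non-expansiveness of $P_X$ and $x^k\in X$,
\[
\|x^{k+1}-x^\star\|^2 \le \|x^k-x^\star\|^2 - 2\alpha\langle h_f^k, x^k-x^\star\rangle + \alpha^2\|h_f^k\|^2 .
\]
Taking $\EE[\,\cdot\mid {\cal F}_k']$, replacing $h_f^k$ by $\Bgrd f(x^k,y^{k+1})+B_k$ via \eqref{eq:property:hg1}, bounding $\langle \Bgrd f(x^k,y^{k+1})-\grd\ell(x^k),x^k-x^\star\rangle$ through \eqref{eq:lip:f:bar} and $\|x^k-x^\star\|\le D_x$, the bias term by $b_k D_x$, using convexity $\langle\grd\ell(x^k),x^k-x^\star\rangle\ge \ell(x^k)-\ell(x^\star)$, and \eqref{eq:hfbd} for $\EE[\|h_f^k\|^2]$, I obtain
\[
\Delta_x^{k+1} \le \Delta_x^k - 2\alpha\big(\EE[\ell(x^k)]-\ell(x^\star)\big) + 2\alpha D_x\big(L\sqrt{\Delta_y^{k+1}}+b_k\big) + \alpha^2\big(\tilde\sigma_f^2+3b_k^2+3L^2\Delta_y^{k+1}\big),
\]
where $\EE\|y^{k+1}-y^\star(x^k)\|\le\sqrt{\Delta_y^{k+1}}$ by Jensen.

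Second, I would reuse the tracking-error recursion from \Cref{sec:gen_ana}: combining the strongly convex SGD contraction for \eqref{eq:y:update} (valid since $\beta\le \mu_g/(L_g^2(1+\sigma_g^2))$) with $\|y^k-y^\star(x^k)\|\le \|y^k-y^\star(x^{k-1})\|+L_y\|x^k-x^{k-1}\|$ and $\|x^k-x^{k-1}\|\le\alpha\|h_f^{k-1}\|$, then substituting \eqref{eq:hfbd} and absorbing the resulting self-referential $\Delta_y$ term using $\alpha\le\mu_g^2/(8L_yLL_g^2(1+\sigma_g^2))$, gives $\Delta_y^{k+1}\le(1-\beta\mu_g/8)\Delta_y^k+{\cal O}\big(\beta^2\sigma_g^2+(\alpha^2/\beta)L_y^2\tilde\sigma_f^2\big)$. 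Unrolling and summing yields $\sum_{k=0}^{\Kmax-1}\Delta_y^{k+1}\lesssim \Delta_y^0/(\beta\mu_g)+\Kmax\big(\beta\sigma_g^2+(\alpha^2/\beta)L_y^2\tilde\sigma_f^2\big)/\mu_g$, and Cauchy--Schwarz gives $\sum_k\sqrt{\Delta_y^{k+1}}\le\sqrt{\Kmax\sum_k\Delta_y^{k+1}}$.

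Third, I would telescope the outer inequality over $k=0,\dots,\Kmax-1$, divide by $2\alpha\Kmax$, and use $\Delta_x^0\le D_x^2$ to get $\frac1{\Kmax}\sum_k(\EE[\ell(x^k)]-\ell(x^\star))\le \frac{D_x^2}{2\alpha\Kmax}+LD_x\frac1{\Kmax}\sum_k\sqrt{\Delta_y^{k+1}}+D_x\frac1{\Kmax}\sum_k b_k+\frac\alpha2\big(\tilde\sigma_f^2+{\cal O}(b^2)\big)+\frac{3\alpha L^2}{2\Kmax}\sum_k\Delta_y^{k+1}$. Substituting $\alpha\asymp\Kmax^{-3/4}$, $\beta\asymp\Kmax^{-1/2}$, $b_k\le c_b\Kmax^{-1/4}$ makes the first three terms ${\cal O}(\Kmax^{-1/4})$ and the last two lower order; the same substitution in the tracking recursion gives $\frac1{\Kmax}\sum_k\Delta_y^{k+1}={\cal O}(\Kmax^{-1/2})$. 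Since ${\sf K}$ is uniform on $\{0,\dots,\Kmax-1\}$, the left-hand side above equals $\EE[\ell(x^{\sf K})-\ell(x^\star)]$ and $\frac1{\Kmax}\sum_k\Delta_y^{k+1}=\EE[\Delta_y^{\sf K+1}]$, proving the two rates, and Jensen's inequality for the convex $\ell$ then gives the bound for $\frac1{\Kmax}\sum_{k=1}^{\Kmax}x^k$.

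The main obstacle is closing the tracking-error recursion cleanly: the $3L^2\Delta_y^{k+1}$ contribution that enters its drift through $\EE\|h_f^{k-1}\|^2$ is self-referential, so one must verify that the step-size conditions force its coefficient strictly below the contraction rate $\beta\mu_g$ — this is precisely where $\alpha/\beta\to 0$, i.e.\ the two-timescale structure, is used. The only place the exponents $3/4$ and $1/2$ in \eqref{eq:stepsize_cvx} get pinned down is the balance between $D_x^2/(\alpha\Kmax)$ and the extra $\sqrt{\Kmax}$ factor lost to Cauchy--Schwarz when bounding $\sum_k\sqrt{\Delta_y^{k+1}}$.
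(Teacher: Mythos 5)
Your proposal is correct and reaches the stated rates, but it takes a genuinely different route from the paper's proof in \S\ref{sec:pfcor}. For the outer problem you run the classical distance-to-optimum recursion for projected SGD: expand $\|x^{k+1}-x^\star\|^2$, invoke convexity of $\ell$ at $x^k$, and absorb the second-moment term $\alpha^2\EE[\|h_f^k\|^2]$ via \eqref{eq:hfbd}. The paper instead combines the smoothness descent estimate \eqref{eq:descent:l} with the three-point inequality \eqref{eq:threepoint}, so the telescoping happens through $\tfrac{1}{2\alpha}\|x^\star-x^k\|^2$ and a residual $\tfrac{L_f}{2}\EE[\|x^{k+1}-x^k\|^2]$ must be controlled separately. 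For the inner problem the paper reuses the weakly convex machinery: it keeps $\EE[\|x^k-x^{k-1}\|^2]$ as an explicitly coupled quantity and applies \Cref{lem:recur_lem} to \Cref{lemma:x:descent_n} to obtain $\frac{1}{\Kmax}\sum_k\Delta_y^k={\cal O}(\Kmax^{-1/2})$ and $\frac{1}{\Kmax}\sum_k\EE[\|x^k-x^{k-1}\|^2]={\cal O}(\Kmax^{-3/2})$; you instead close the tracking recursion directly by substituting $\|x^k-x^{k-1}\|\le\alpha\|h_f^{k-1}\|$ and \eqref{eq:hfbd} and absorbing the self-referential $\Delta_y$ term, exactly as in the strongly convex analysis of \Cref{lemma:refined_dy}. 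Your version is more self-contained (it bypasses \Cref{lem:recur_lem} for the outer bound) at the price of leaning on the uniform bound $\tilde\sigma_f^2<\infty$, which the paper assumes throughout after \eqref{eq:hfbd} and which is anyway implied here by $D_x<\infty$. One small bookkeeping slip: after dividing the per-step drift by the contraction factor ${\rm d}_0\asymp\mu_g\beta$, the $\tilde\sigma_f^2$ contribution to $\frac{1}{\Kmax}\sum_k\Delta_y^k$ should scale as $\alpha^2/\beta^2$ rather than $\alpha^2/\beta$; since $\alpha^2/\beta^2\asymp\Kmax^{-1/2}$ this does not affect the rate. Both arguments pin the exponents $3/4$ and $1/2$ exactly where you say: balancing $D_x^2/(\alpha\Kmax)$ against the square-root loss in $\frac{1}{\Kmax}\sum_k\sqrt{\Delta_y^k}\le\bigl(\frac{1}{\Kmax}\sum_k\Delta_y^k\bigr)^{1/2}$, with the two-timescale ratio $\alpha/\beta\lesssim\mu_g/(L_yL)$ doing the same work in either route.
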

From \Cref{cor:c:c}, the {\sf TTSA} algorithm requires ${\cal O}(\epsilon^{-4} \log(1/\epsilon))$ stochastic oracle calls to find an $\epsilon$-optimal solution (in terms of the optimality gap defined with objective values). This is comparable to the complexity bounds in \cite{ghadimi2018approximation}, which requires ${\cal O}(\epsilon^{-4} \log(1/\epsilon) )$ (resp.~${\cal O}(\epsilon^{-2} )$) stochastic oracle calls for the inner problem (resp.~outer problem). Additionally, we mention that the constant $D_x$, which represents the diameter of the constraint set, appears in the constant of the convergence bounds, therefore it is omitted in the big-O notation in \eqref{eq:stepsize_cvx}. For details, please see the proof in Appendix ~\ref{sec:pfcor}.
		
\vspace{-0.2cm}
\subsection{Convergence Analysis}\label{sec:gen_ana}
We now present the proofs for \Cref{th:sc:uc}, \ref{th:wc:c}. The proof for \Cref{cor:c:c} is similar to that of \Cref{th:wc:c}. Due to the space limitation, we refer the readers to \cite{Hong-TTSA-2020}. We highlight that the proofs of both theorems rely on the similar ideas of tackling coupled inequalities.
		
\vspace{-0.2cm}
\paragraph{Proof of \Cref{th:sc:uc}} Our proof relies on bounding the optimality gap and tracking error \emph{coupled with each other}. First we derive the convergence of the inner problem.
\begin{Lemma} \label{lemma:refined_dy}
Under \Cref{ass:f}, \ref{ass:g}, \ref{ass:stoc}. Suppose that the step size satisfies \eqref{eq:stepsize0}, \eqref{eq:stepsize1}. For any $k \geq 1$, it holds that
\beq \label{eq:upper_bound_y_seq}
\Delta_y^{k+1} \leq \prod_{\ell=0}^k (1 - \beta_\ell \mu_g / 2) \, \Delta_y^0 + \frac{8}{ \mu_g} \Big\{ {\sigma}_g^2 + \frac{4 {\rm c}_0^2 L_y^2}{\mu_g} \big[ \tilde\sigma_f^2 + 3 b_0^2 \big] \Big\} \beta_k. 
% \quad \text{where} \quad {\rm C}_y^{(1)} :=  .
\eeq
\end{Lemma}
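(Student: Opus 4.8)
The plan is to derive a one-step contraction of the form
\[
\Delta_y^{k+1} \le \big(1 - \beta_k \mu_g/2\big)\,\Delta_y^k + \beta_k^2 \, C,
\]
where $C = {\cal O}\!\big(\sigma_g^2 + \tfrac{{\rm c}_0^2 L_y^2}{\mu_g}(\tilde\sigma_f^2 + b_0^2)\big)$ collects the relevant noise levels, and then to unroll it. Two effects must be controlled: the contraction of the inner stochastic gradient step \eqref{eq:y:update} toward $y^\star(x^k)$, and the displacement of the reference point from $y^\star(x^{k-1})$ to $y^\star(x^k)$ induced by the outer update \eqref{eq:x:update}; the latter is where the timescale separation $\alpha_k = {\cal O}(\beta_k^{3/2})$ is exploited.

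First I would treat \eqref{eq:y:update} as an SGD step on $g(x^k,\cdot)$, which by \Cref{ass:g} is $\mu_g$-strongly convex and $L_g$-smooth with unconstrained minimizer $y^\star(x^k)$, so that $\nabla_y g(x^k,y^\star(x^k)) = 0$ and $\|\nabla_y g(x^k,y^k)\| \le L_g\|y^k - y^\star(x^k)\|$. Combining co-coercivity with the unbiasedness and the variance bound \eqref{eq:property:hg} of $h_g^k$, together with the step-size restriction $\beta_k \le \mu_g/(L_g^2(1+\sigma_g^2))$ from \eqref{eq:stepsize1} (which is admissible since $\mu_g \le L_g$), I obtain
\[
\EE\big[\,\|y^{k+1}-y^\star(x^k)\|^2 \;\big|\; \mathcal{F}_k\,\big] \le (1-\beta_k\mu_g)\,\|y^k-y^\star(x^k)\|^2 + \beta_k^2\sigma_g^2 ,
\]
and hence, taking expectations, $\Delta_y^{k+1} \le (1-\beta_k\mu_g)\,\EE\|y^k-y^\star(x^k)\|^2 + \beta_k^2\sigma_g^2$.

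The second step is to relate $\EE\|y^k-y^\star(x^k)\|^2$ back to $\Delta_y^k$. Young's inequality with a free parameter $\eta>0$, together with the Lipschitz bound $\|y^\star(x^{k-1})-y^\star(x^k)\|\le L_y\|x^{k-1}-x^k\|$ from \eqref{eq:lip:f:bar}, gives
\[
\|y^k-y^\star(x^k)\|^2 \le (1+\eta)\,\|y^k-y^\star(x^{k-1})\|^2 + (1+\eta^{-1})\,L_y^2\,\|x^k-x^{k-1}\|^2 .
\]
Since $P_X$ is nonexpansive and $x^{k-1}\in X$, one has $\|x^k-x^{k-1}\|^2 \le \alpha_{k-1}^2\|h_f^{k-1}\|^2$, while \eqref{eq:hfbd} gives $\EE\|h_f^{k-1}\|^2 \le \tilde\sigma_f^2 + 3b_{k-1}^2 + 3L^2\Delta_y^k$. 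Choosing $\eta \asymp \beta_k\mu_g$ and invoking the remaining step-size conditions $\alpha_{k-1}^2\le{\rm c}_0^2\beta_{k-1}^3$, $\beta_{k-1}/\beta_k\le 1+\beta_k\mu_g/8$, and $\beta_k\le\mu_g^2/(48{\rm c}_0^2L^2L_y^2)$ from \eqref{eq:stepsize0}, \eqref{eq:stepsize1}, both the $(1+\eta)$ inflation factor and the feedback term proportional to $L^2\Delta_y^k$ become perturbations of order $\beta_k\mu_g\,\Delta_y^k$ and are absorbed into the contraction; using $b_{k-1}\le b_0$ (monotonicity of $\{b_k\}$) then produces the one-step recursion displayed above.

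Finally, unrolling gives $\Delta_y^{k+1}\le\prod_{\ell=0}^k(1-\beta_\ell\mu_g/2)\,\Delta_y^0 + C\sum_{j=0}^k\beta_j^2\prod_{\ell=j+1}^k(1-\beta_\ell\mu_g/2)$, and I would conclude with the elementary fact that $\sum_{j=0}^k\beta_j^2\prod_{\ell=j+1}^k(1-\beta_\ell\mu_g/2)\le 4\beta_k/\mu_g$ whenever $\{\beta_j\}$ is nonincreasing with $\beta_0\le 4/\mu_g$ and $\beta_j/\beta_{j+1}\le 1+\beta_j\mu_g/4$, which follows by a short induction on $k$. Tracking the numerical constants then yields \eqref{eq:upper_bound_y_seq}. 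I expect the second step to be the main obstacle: the outer drift simultaneously inflates the contraction factor and re-injects $\Delta_y^k$ through $\EE\|h_f^{k-1}\|^2$, and it is precisely the separation of timescales $\alpha_k={\cal O}(\beta_k^{3/2})$ that makes both perturbations lower order, so that the clean geometric recursion is preserved; propagating the constants through the slowly-varying-step-size induction is the other delicate point.
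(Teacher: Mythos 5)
Your proposal is correct and follows essentially the same route as the paper's proof: a one-step SGD contraction toward $y^\star(x^k)$ under the step-size restriction $\beta_k L_g^2(1+\sigma_g^2)\le\mu_g$, a Young's-inequality shift of the reference point from $y^\star(x^{k-1})$ to $y^\star(x^k)$ controlled via the Lipschitz continuity of $y^\star(\cdot)$, nonexpansiveness of $P_X$, and the bound \eqref{eq:hfbd}, with the timescale condition $\alpha_k\le{\rm c}_0\beta_k^{3/2}$ and $\beta_k\le\mu_g^2/(48{\rm c}_0^2L^2L_y^2)$ absorbing the re-injected $\Delta_y^k$ term, followed by unrolling and the standard weighted-sum estimate for slowly varying step sizes (the paper's \Cref{lem:aux2}). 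No gaps.
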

\noindent Notice that the bound in \eqref{eq:upper_bound_y_seq} relies on the strong convexity of the inner problem and the Lipschitz properties established in \Cref{lem:lips} for $y^\star(x)$; see \S\ref{sec:pflem31}. We emphasize that the step size condition $\alpha_k \leq {\rm c}_0 \beta_k^{3/2}$ is crucial in establishing the above bound. As the second step, we bound the convergence of the outer problem.
\begin{Lemma} \label{lemma:refined_dx}
Under \Cref{ass:f}, \ref{ass:g}, \ref{ass:stoc}. Assume that the bias satisfies $b_k^2 \leq \tilde{\rm c}_b \alpha_{k+1}$. With \eqref{eq:stepsize0}, \eqref{eq:stepsize1}, for any $k \geq 1$, it holds that\vspace{-.1cm}
\begin{align}  \label{eq:upper_bound_x_seq}
\begin{split}
\Delta_x^{k+1} & \leq \prod_{\ell=0}^k (1 - \alpha_\ell \mu_\ell)  \Delta_x^0 + \Big[ \frac{4 \tilde{\rm c}_b}{ \mu_\ell^2} + \frac{2\tilde\sigma_f^2 + 6 b_0^2}{\mu_\ell} \Big] \alpha_k  \\
& \quad + \Big[ \frac{ 2L^2 }{\mu_\ell} + 3\alpha_0 L^2 \Big] \sum_{j=0}^k \alpha_j  \prod_{\ell=j+1}^k (1 - \alpha_\ell \mu_\ell) \Delta_y^{j+1}.\\[-.3cm]
\end{split}
\end{align}
% where the constants $L$ and  $L_f$, are specified in Lemma \ref{lem:lips}. 
\end{Lemma}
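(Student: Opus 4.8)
The plan is to establish a coupled one-step recursion relating $\Delta_x^{k+1}$ to $\Delta_x^k$ and the tracking error $\Delta_y^{k+1}$, and then to unroll it. First I would use the outer update $x^{k+1}=P_X(x^k-\alpha_k h_f^k)$ together with nonexpansiveness of $P_X$ and $P_X(x^\star)=x^\star$ to get
\[
\|x^{k+1}-x^\star\|^2 \le \|x^k-x^\star\|^2 - 2\alpha_k\langle h_f^k,\,x^k-x^\star\rangle + \alpha_k^2\|h_f^k\|^2 .
\]
Then I would condition on ${\cal F}_k'$, under which $x^k$, $y^{k+1}$ and $y^\star(x^k)$ are measurable: \Cref{ass:stoc} gives $\EE[h_f^k\mid{\cal F}_k']=\Bgrd f(x^k,y^{k+1})+B_k$ with $\|B_k\|\le b_k$, and \eqref{eq:hfbd} gives $\EE[\|h_f^k\|^2\mid{\cal F}_k']\le \tilde\sigma_f^2+3b_k^2+3L^2\|y^{k+1}-y^\star(x^k)\|^2$.

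For the cross term I would split $\Bgrd f(x^k,y^{k+1})=\grd\ell(x^k)+(\Bgrd f(x^k,y^{k+1})-\grd\ell(x^k))$. Strong convexity of $\ell$ (modulus $\mu_\ell$) together with global optimality of $x^\star$ over $X$ gives $\langle\grd\ell(x^k),x^k-x^\star\rangle\ge\mu_\ell\|x^k-x^\star\|^2$; the surrogate error is controlled by $\|\Bgrd f(x^k,y^{k+1})-\grd\ell(x^k)\|\le L\|y^{k+1}-y^\star(x^k)\|$ from \eqref{eq:lip:f:bar}, and the bias by $\|B_k\|\le b_k$. Applying Young's inequality $2\alpha_k\|e\|\|x^k-x^\star\|\le\tfrac{\mu_\ell\alpha_k}{2}\|x^k-x^\star\|^2+\tfrac{2\alpha_k}{\mu_\ell}\|e\|^2$ to these two contributions, the corrections add back $\mu_\ell\alpha_k\|x^k-x^\star\|^2$, so the coefficient of $\|x^k-x^\star\|^2$ becomes exactly $1-2\alpha_k\mu_\ell+\mu_\ell\alpha_k=1-\alpha_k\mu_\ell$. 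Taking total expectation, inserting the second-moment bound, and using $\alpha_k\le\alpha_0$ to absorb $3\alpha_k^2L^2$ into $3\alpha_0\alpha_k L^2$, I obtain a one-step inequality
\[
\Delta_x^{k+1}\le(1-\alpha_k\mu_\ell)\Delta_x^k+\Big(\tfrac{2L^2}{\mu_\ell}+3\alpha_0 L^2\Big)\alpha_k\,\Delta_y^{k+1}+\tfrac{2\alpha_k b_k^2}{\mu_\ell}+3\alpha_k^2 b_k^2+\alpha_k^2\tilde\sigma_f^2 ,
\]
and, using $b_k^2\le\tilde{\rm c}_b\alpha_{k+1}\le\tilde{\rm c}_b\alpha_k$ and $\alpha_k\le1/\mu_\ell$, the last three terms are all $O(\alpha_k^2)$.

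It remains to unroll this recursion over $0,\dots,k$. The $\Delta_x^0$ term and the $\Delta_y$ term produce exactly $\prod_{\ell=0}^k(1-\alpha_\ell\mu_\ell)\Delta_x^0$ and $\big(\tfrac{2L^2}{\mu_\ell}+3\alpha_0 L^2\big)\sum_{j=0}^k\alpha_j\prod_{\ell=j+1}^k(1-\alpha_\ell\mu_\ell)\Delta_y^{j+1}$; I would deliberately leave the latter as an unexpanded sum, since in the proof of \Cref{th:sc:uc} it is merged with \Cref{lemma:refined_dy} and the two-timescale condition $\alpha_k\le{\rm c}_0\beta_k^{3/2}$ is then used to make it summable. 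The scalar remainder needs the estimate $\sum_{j=0}^k\alpha_j^2\prod_{\ell=j+1}^k(1-\alpha_\ell\mu_\ell)\lesssim\alpha_k/\mu_\ell$: this is the crux, and it is exactly what the step-size ratio condition $\alpha_{k-1}/\alpha_k\le1+\tfrac34\alpha_k\mu_\ell$ in \eqref{eq:stepsize0} is designed for — via an induction on $S_k=\alpha_k^2+(1-\alpha_k\mu_\ell)S_{k-1}$ using $(1-\alpha_k\mu_\ell)(1+\tfrac34\alpha_k\mu_\ell)\le1-\tfrac14\alpha_k\mu_\ell$. Plugging $b_k^2\le\tilde{\rm c}_b\alpha_{k+1}$ (and the cruder $b_k\le b_0$) into the scalar terms then collapses them into the coefficient $\tfrac{4\tilde{\rm c}_b}{\mu_\ell^2}+\tfrac{2\tilde\sigma_f^2+6b_0^2}{\mu_\ell}$ times $\alpha_k$, yielding \eqref{eq:upper_bound_x_seq}.

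I expect the descent step and the Young-inequality bookkeeping to be routine; the genuinely delicate points are (i) the sharp $O(\alpha_k)$ — rather than $O(1)$ — control of the accumulated noise and bias, which hinges on the step-size ratio conditions, and (ii) the choice to carry the $\Delta_y$ sum forward intact so that it can later be cancelled against the tracking-error bound of \Cref{lemma:refined_dy} rather than bounded prematurely.
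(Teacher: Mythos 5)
Your proposal is correct and follows essentially the same route as the paper's proof: nonexpansiveness of the projection, conditioning on ${\cal F}_k'$, the strong-convexity lower bound on $\langle \grd\ell(x^k), x^k-x^\star\rangle$, Young's inequality absorbing half the contraction, the bounds from \Cref{lem:lips} and \eqref{eq:hfbd}, and then unrolling with the weighted-sum estimate $\sum_{j}\alpha_j^2\prod_{\ell=j+1}^k(1-\alpha_\ell\mu_\ell)\lesssim \alpha_k/\mu_\ell$ (the paper invokes \Cref{lem:aux2} with $q=2$ where you argue by induction on the step-size ratio, and it applies Young once to the combined error $\Bgrd f-\grd\ell+B_k$ where you split it in two — both immaterial differences). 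Leaving the $\Delta_y^{j+1}$ sum unexpanded for later coupling with \Cref{lemma:refined_dy} is exactly what the paper does.
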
 
see \S\ref{sec:pflem32}. We observe that \eqref{eq:upper_bound_y_seq}, \eqref{eq:upper_bound_x_seq} lead to a pair of  coupled inequalities. To compute the final bound in the theorem, we substitute \eqref{eq:upper_bound_y_seq} into \eqref{eq:upper_bound_x_seq}. As $\Delta_y^{j+1} = {\cal O}( \beta_j ) = {\cal O}( \alpha_j^{2/3} )$, %due to the difference in time scales of step sizes, 
the dominating term in \eqref{eq:upper_bound_x_seq} can be estimated as
\beq
\sum_{j=0}^k \alpha_j \prod_{\ell=j+1}^k (1 - \alpha_\ell \mu_\ell) \Delta_y^{j+1} = \sum_{j=0}^k {\cal O}(\alpha_j^{5/3}) \prod_{\ell=j+1}^k (1 - \alpha_\ell \mu_\ell ) = {\cal O}( \alpha_k^{2/3} ),
\eeq
yielding the desirable rates in the theorem. See \S\ref{sec:pfthm31} for details.
		
\vspace{-0.2cm}
\paragraph{Proof of \Cref{th:wc:c}} Without strong convexity in the outer problem, the analysis becomes more challenging. To this end, we first develop the following lemma on coupled inequalities with numerical sequences, which will be pivotal to our analysis:
		\begin{Lemma} \label{lem:recur_lem}
			Let $K \geq 1$ be an integer. 
			Consider sequences of non-negative scalars $\{ \Omega^k \}_{k=0}^K$, $\{ \Upsilon^k \}_{k=0}^K$, $\{ \Theta^k \}_{k=0}^K$. Let ${\rm c}_0, {\rm c}_1, {\rm c}_2$, ${\rm d}_0, {\rm d}_1, {\rm d}_2$ be some positive constants. If the recursion holds 
			\begin{align} \label{eq:recurnew}
			\begin{split}
				& \Omega^{k+1} \leq \Omega^k - {\rm c}_0 \Theta^{k+1} + {\rm c}_1 \Upsilon^{k+1} + {\rm c}_2,\quad  \Upsilon^{k+1} \leq (1 - {\rm d}_0) \Upsilon^k + {\rm d}_1 \Theta^k + {\rm d}_2, 
			\end{split}
			\end{align}
			for any $k \geq 0$. Then provided that 
			${\rm c}_0 - {\rm c}_1 {\rm d}_1  ({\rm d}_0)^{-1} > 0, {\rm d}_0 - {\rm d}_1 {\rm c}_1 ({\rm c}_0)^{-1} > 0$, 
			it holds
			\begin{align} \label{eq:recur_concl}
			\begin{split}
				\frac{1}{K} \sum_{k=1}^K \Theta^k & \leq \frac{ \Omega^0 + \frac{{\rm c}_1}{ {\rm d}_0 } \big( \Upsilon^0 + {\rm d}_1 \Theta^0 + {\rm d}_2 \big) }{ \big( {\rm c}_0 - {\rm c}_1 {\rm d}_1  ({\rm d}_0)^{-1} \big) K } + \frac{ {\rm c}_2 + {\rm c}_1 {\rm d}_2 ({\rm d}_0)^{-1} }{ {\rm c}_0 - {\rm c}_1 {\rm d}_1  ({\rm d}_0)^{-1} } \\
				\frac{1}{K} \sum_{k=1}^K \Upsilon^k & \leq \frac{ \Upsilon^0 + {\rm d}_1 \Theta^0 + {\rm d}_2 + \frac{ {\rm d}_1 }{ {\rm c}_0 } \Omega^0 }{ \big( {\rm d}_0 - {\rm d}_1 {\rm c}_1 ({\rm c}_0)^{-1} \big) K } + \frac{ {\rm d}_2 + {\rm d}_1 {\rm c}_2 ({\rm c}_0)^{-1} }{ {\rm d}_0 - {\rm d}_1 {\rm c}_1 ({\rm c}_0)^{-1} } .
			\end{split}
			\end{align}
		\end{Lemma}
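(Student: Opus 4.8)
\textbf{Proof proposal for Lemma \ref{lem:recur_lem}.}

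The plan is to decouple the two recursions in \eqref{eq:recurnew} by forming a suitable linear combination, then telescope. First I would unroll the second recursion: since $\Upsilon^{k+1} \le (1-{\rm d}_0)\Upsilon^k + {\rm d}_1 \Theta^k + {\rm d}_2$ with ${\rm d}_0 \in (0,1)$ (which follows from ${\rm d}_0 - {\rm d}_1 {\rm c}_1/{\rm c}_0 > 0$ and positivity), averaging over $k = 1,\dots,K$ and using $\sum_{k}\Upsilon^k \le \Upsilon^0/{\rm d}_0 + ({\rm d}_1/{\rm d}_0)\sum_k \Theta^k + K{\rm d}_2/{\rm d}_0$ after rearranging gives a bound of the form
\begin{equation*}
\frac{1}{K}\sum_{k=1}^K \Upsilon^k \le \frac{\Upsilon^0 + {\rm d}_1\Theta^0 + {\rm d}_2}{{\rm d}_0 K} + \frac{{\rm d}_1}{{\rm d}_0}\cdot\frac{1}{K}\sum_{k=1}^K\Theta^k + \frac{{\rm d}_2}{{\rm d}_0}.
\end{equation*}
(Care is needed on the index shift: $\sum_{k=1}^K \Theta^k$ vs $\sum_{k=0}^{K-1}\Theta^k$; I would absorb the boundary term $\Theta^0$ explicitly, which is why $\Theta^0$ appears in \eqref{eq:recur_concl}.)

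Next I would telescope the first recursion. Summing $\Omega^{k+1} - \Omega^k \le -{\rm c}_0\Theta^{k+1} + {\rm c}_1\Upsilon^{k+1} + {\rm c}_2$ over $k = 0,\dots,K-1$ and using $\Omega^K \ge 0$ yields
\begin{equation*}
{\rm c}_0 \sum_{k=1}^K \Theta^k \le \Omega^0 + {\rm c}_1\sum_{k=1}^K \Upsilon^k + K{\rm c}_2.
\end{equation*}
Now substitute the averaged bound on $\sum \Upsilon^k$ from the previous step. The term ${\rm c}_1 \cdot ({\rm d}_1/{\rm d}_0)\sum_k\Theta^k$ on the right can be moved to the left, giving $({\rm c}_0 - {\rm c}_1{\rm d}_1/{\rm d}_0)\sum_k \Theta^k$ on the left — and the hypothesis ${\rm c}_0 - {\rm c}_1{\rm d}_1/{\rm d}_0 > 0$ is exactly what makes this coefficient positive so we can divide through. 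Collecting the $O(1)$ terms (the $\Omega^0$, $\Upsilon^0$, $\Theta^0$, ${\rm d}_2$ pieces) into the $1/K$ term and the constant terms (${\rm c}_2$, ${\rm c}_1{\rm d}_2/{\rm d}_0$) into the second term gives the first inequality in \eqref{eq:recur_concl}. The second inequality in \eqref{eq:recur_concl} follows by the symmetric argument: plug the just-derived bound on $\frac{1}{K}\sum\Theta^k$ back into the averaged $\Upsilon$ recursion — here the relevant positivity condition is the other one, ${\rm d}_0 - {\rm d}_1{\rm c}_1/{\rm c}_0 > 0$ — except one first needs a $\Theta$-bound that does not itself depend on $\Upsilon$; this is obtained by instead bounding $\sum\Theta^k$ directly from the telescoped first recursion using the crude estimate $\sum\Upsilon^k \le \Upsilon^0/{\rm d}_0 + ({\rm d}_1/{\rm d}_0)\sum\Theta^k + K{\rm d}_2/{\rm d}_0$ once more and solving the resulting linear inequality in $\sum\Theta^k$.

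The main obstacle I anticipate is purely bookkeeping rather than conceptual: keeping the index ranges consistent when passing between $\sum_{k=1}^K$ and $\sum_{k=0}^{K-1}$ in the two recursions (one is naturally a telescoping sum from $0$ to $K-1$, the other a forward recursion), and making sure the boundary terms $\Theta^0$, $\Upsilon^0$, $\Omega^0$ land in the right places with the right constants — this is what dictates the precise form of the numerators in \eqref{eq:recur_concl}. The only genuinely substantive point is recognizing that the two positivity conditions ${\rm c}_0 - {\rm c}_1{\rm d}_1/{\rm d}_0 > 0$ and ${\rm d}_0 - {\rm d}_1{\rm c}_1/{\rm c}_0 > 0$ are precisely the two "effective contraction" conditions needed so that, after back-substitution, the coefficient of $\sum\Theta^k$ (resp. $\sum\Upsilon^k$) on the left-hand side stays strictly positive and can be divided out; everything else is elementary rearrangement and the use of non-negativity of the sequences to drop the terminal terms $\Omega^K, \Upsilon^K \ge 0$.
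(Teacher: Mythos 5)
Your proposal is correct and follows essentially the same route as the paper: sum (telescope) both recursions over $k=0,\dots,K-1$, drop the non-negative terminal terms, cross-substitute one summed inequality into the other, and use the two positivity conditions to divide out the coefficient of $\sum_k\Theta^k$ (resp.\ $\sum_k\Upsilon^k$). The index bookkeeping you flag is handled in the paper exactly as you anticipate, with the boundary term $\Upsilon^1\le\Upsilon^0+{\rm d}_1\Theta^0+{\rm d}_2$ accounting for the $\Theta^0$ in the numerators.
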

		\noindent The proof of the above lemma is simple and is relegated to \S\ref{sec:pfrecur_n}. 
		
		We demonstrate that stationarity measures of the {\sf TTSA} iterates satisfy \eqref{eq:recurnew}. The conditions ${\rm c}_0 - {\rm c}_1 {\rm d}_1  ({\rm d}_0)^{-1} > 0, {\rm d}_0 - {\rm d}_1 {\rm c}_1 ({\rm c}_0)^{-1} > 0$ impose constraints on the step sizes and \eqref{eq:recur_concl} leads to a finite-time bound on the convergence of {\sf TTSA}. To begin our derivation of \Cref{th:wc:c}, we observe the following coupled descent lemma:
		\begin{Lemma}\label{lemma:x:descent_n} 
			Under \Cref{ass:f}, \ref{ass:g}, \ref{ass:stoc}. If $\mu_g \beta / 2 < 1$, $\beta L_g^2(1+\sigma_g^2) \leq \mu_g$, then the following inequalities hold for any $k \geq 0$:
			\begin{subequations} \label{eq:descent_n}
				\begin{align}
				\mbox{\rm OPT}^{k+1} & \le \mbox{\rm OPT}^k - \frac{1- \alpha L_f}{2\alpha}  \mathbb{E}[\|x^{k+1}-x^k\|^2] + \alpha \big[ 2   L^2 \Delta^{k+1}_y +  2b_0^2 +  {\sigma^2_f} \big] \label{eq:descent:non-convex:x_n}\\
				\Delta^{k+1}_y & \le \big (1- \mu_g  \beta / 2 \bigr )\Delta^k_y+ \Big (\frac{2}{\mu_g  \beta}-1\Big ) L_y^2 \cdot \mathbb{E}[\|x^k- x^{k-1}\|^2]+ \beta^2 \sigma_g^2 \label{eq:descent:non-convex:y_n}.
				\end{align}
			\end{subequations}
		\end{Lemma}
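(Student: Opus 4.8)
The plan is to establish the two inequalities in \eqref{eq:descent_n} separately, since each is a one-step descent estimate for one of the two coupled quantities $\mathrm{OPT}^k := \EE[\ell(x^k) - \ell(x^\star)]$ (or, in the weakly convex case, the Moreau envelope value) and $\Delta_y^k := \EE[\|y^k - y^\star(x^{k-1})\|^2]$.

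\textbf{Step 1: the outer descent \eqref{eq:descent:non-convex:x_n}.} I would start from the $L_f$-smoothness of $\ell$ (Lemma \ref{lem:lips}, eq.~\eqref{eq:lip:f}): $\ell(x^{k+1}) \le \ell(x^k) + \langle \grd\ell(x^k), x^{k+1}-x^k\rangle + \tfrac{L_f}{2}\|x^{k+1}-x^k\|^2$. The crux is to control the inner product using the projection update $x^{k+1} = P_X(x^k - \alpha h_f^k)$. Using the standard projection inequality $\langle x^{k+1} - (x^k - \alpha h_f^k), x - x^{k+1}\rangle \ge 0$ with $x = x^k$ gives $\langle h_f^k, x^{k+1}-x^k\rangle \le -\tfrac{1}{\alpha}\|x^{k+1}-x^k\|^2$. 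Then I split $\grd\ell(x^k) = h_f^k + (\grd\ell(x^k) - \Bgrd f(x^k,y^{k+1})) + (\Bgrd f(x^k,y^{k+1}) - h_f^k)$, take conditional expectation w.r.t.\ $\mathcal F_k'$ so the last bracket becomes the bias $-B_k$ (with $\|B_k\|\le b_k \le b_0$ since $b_k$ is nonincreasing), and bound the middle bracket by $L\|y^{k+1} - y^\star(x^k)\|$ via \eqref{eq:lip:f:bar}. Applying Young's inequality to the cross terms $\langle \grd\ell(x^k) - \Bgrd f, x^{k+1}-x^k\rangle$ and $\langle B_k, x^{k+1}-x^k\rangle$, absorbing the $\tfrac{1}{\alpha}\|x^{k+1}-x^k\|^2$ pieces, and using $\EE[\|h_f^k\|^2\,|\,\mathcal F_k'] \le \tilde\sigma_f^2 + 3b_k^2 + 3L^2\|y^{k+1}-y^\star(x^k)\|^2$ from \eqref{eq:hfbd} — or rather keeping the variance term $\sigma_f^2$ cleanly — yields, after taking full expectation and recognizing $\Delta_y^{k+1} = \EE[\|y^{k+1}-y^\star(x^k)\|^2]$, exactly \eqref{eq:descent:non-convex:x_n}. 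In the weakly convex case one instead runs this argument against $\hat x(x^k)$ and uses the Moreau envelope, but the structure is identical; I'd simply state $\mathrm{OPT}^k$ to cover both.

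\textbf{Step 2: the tracking-error descent \eqref{eq:descent:non-convex:y_n}.} Write $y^{k+1} - y^\star(x^k) = y^k - \beta h_g^k - y^\star(x^k)$ and insert $\pm y^\star(x^{k-1})$ to relate to $\Delta_y^k$. Expanding the square and conditioning on $\mathcal F_k$: the inner product term $-2\beta\langle \EE[h_g^k|\mathcal F_k], y^k - y^\star(x^{k-1})\rangle = -2\beta\langle \grd_y g(x^k,y^k), y^k - y^\star(x^{k-1})\rangle$; here I use strong convexity of $g(x^k,\cdot)$ with modulus $\mu_g$ together with $\grd_y g(x^k, y^\star(x^k)) = 0$ to get the contraction $\|y^k - \beta\grd_y g(x^k,y^k) - y^\star(x^k)\|^2 \le (1-\mu_g\beta)\|y^k - y^\star(x^k)\|^2$ (valid since $\beta \le \mu_g/(L_g^2(1+\sigma_g^2)) \le \mu_g/L_g^2$, the usual SGD step-size condition), plus the variance term $\beta^2\EE[\|h_g^k - \grd_y g(x^k,y^k)\|^2]\le \beta^2\sigma_g^2(1+\|\grd_y g\|^2)$; the $\|\grd_y g\|^2$ contribution is what the condition $\beta L_g^2(1+\sigma_g^2)\le\mu_g$ is designed to absorb into the contraction factor. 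Finally I handle the shift from $y^\star(x^k)$ to $y^\star(x^{k-1})$ using $\|y^\star(x^k) - y^\star(x^{k-1})\| \le L_y\|x^k - x^{k-1}\|$ from \eqref{eq:lip:f:bar} and Young's inequality with weight $\mu_g\beta/2$, producing the $(1-\mu_g\beta/2)\Delta_y^k$ prefactor and the $(\tfrac{2}{\mu_g\beta}-1)L_y^2\EE[\|x^k-x^{k-1}\|^2]$ term. Taking full expectation gives \eqref{eq:descent:non-convex:y_n}.

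\textbf{Main obstacle.} The delicate point is bookkeeping the bias and variance of $h_f^k$ correctly through the projection step: one must be careful to condition on the right filtration ($\mathcal F_k'$, which includes $y^{k+1}$) so that $\EE[h_f^k] = \Bgrd f(x^k,y^{k+1}) + B_k$ applies, and then to split the gap $\grd\ell(x^k) - h_f^k$ into a deterministic-bias piece, a tracking-error piece bounded by $L\|y^{k+1}-y^\star(x^k)\|$, and a zero-mean noise piece — choosing the Young's-inequality weights so that exactly one factor of $L^2\Delta_y^{k+1}$ (with the stated constant $2$) survives while the $\|x^{k+1}-x^k\|^2$ terms are dominated by $\tfrac{1-\alpha L_f}{2\alpha}$. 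The $y$-recursion is more routine, the only subtlety there being the interplay between the two step-size conditions that let the multiplicative noise term $\sigma_g^2\|\grd_y g\|^2$ be swallowed.
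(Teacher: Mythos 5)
Your proposal is correct and follows essentially the same route as the paper's proof: the outer bound comes from $L_f$-smoothness of $\ell$ plus the projection optimality condition, a three-way split of $\grd\ell(x^k)-h_f^k$ into the $L\|y^{k+1}-y^\star(x^k)\|$ tracking piece, the bias $B_k$, and the zero-mean noise (handled with Young weights $c=d=\tfrac{1}{2\alpha}$ so that only $\sigma_f^2$, not $\tilde\sigma_f^2$, survives — your ``keeping the variance term cleanly'' remark is exactly the right call); the inner bound is the standard SGD contraction to $y^\star(x^k)$ followed by a Young shift to $y^\star(x^{k-1})$ with $(1+c)(1-\mu_g\beta)=1-\mu_g\beta/2$. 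The only stray remark is the aside about running the argument against $\hat x(x^k)$: that belongs to the separate Moreau-envelope lemma, while this lemma needs no convexity of $\ell$ at all.
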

		\noindent The proof of \eqref{eq:descent:non-convex:x_n} is due to the smoothness of outer function $\ell(\cdot)$ established in \Cref{lem:lips}, while \eqref{eq:descent:non-convex:y_n} follows from the strong convexity of the inner problem. See the details in \S\ref{sec:pfxdescent}. Note that \eqref{eq:descent:non-convex:x_n}, \eqref{eq:descent:non-convex:y_n} together is a special case of \eqref{eq:recurnew} with:
		\begin{align} \label{eq:main_ref_ncvx}
		\begin{split}
		& \Omega^k = {\rm OPT}^k,~\Theta^k = \EE[ \| x^{k} - x^{k-1} \|^2 ],~{\rm c}_0 = \frac{1}{2 \alpha} - \frac{L_f}{2},~{\rm c}_1 = 2 \alpha L^2,~ {\rm c}_2 = \alpha ( 2 b_0^2 + \sigma_f^2 ), \\
		& \Upsilon^k = \Delta_y^k,~~{\rm d}_0 = \mu_g \beta / 2, ~~ {\rm d}_1 = \Big( \frac{2}{\mu_g \beta} - 1 \Big) L_y^2,~~ {\rm d}_2 = \beta^2 \sigma_g^2.
		\end{split}
		\end{align}
		Notice that $\Theta^0 = 0$. 
		Assuming that $\alpha \leq 1/ 2L_f$, we notice the following implications:
		\beq \label{eq:stepratio_c}
		\frac{\alpha}{\beta} \leq \frac{ \mu_g }{ 8 L_y L } ~~\Longrightarrow~~ {\rm c}_0 - {\rm c}_1 \frac{ {\rm d}_1 }{ {\rm d}_0 } \geq \frac{1}{8 \alpha} > 0,~~{\rm d}_0 - {\rm d}_1 \frac{ {\rm c}_1 }{ {\rm c}_0 } \geq \frac{ \mu_g \beta }{ 4 } > 0,
		\eeq
		i.e., if \eqref{eq:stepratio_c} holds, then the conclusion \eqref{eq:recur_concl} can be applied.
		It can be shown that the step sizes in \eqref{eq:stepsize_const} satisfy \eqref{eq:stepratio_c}. 
		Applying \Cref{lem:recur_lem} shows that
		\begin{align*} \notag
		\begin{split}
			& \frac{1}{K} \sum_{k=1}^K \EE[ \| x^k - x^{k-1} \|^2 ] \leq \frac{ 2 {\rm OPT}^0 + \frac{L}{L_y} (\Delta_y^0 + 4 \sigma_g^2 / \mu_g^2 ) }{ L_y L \cdot  K^{8/5} } + \frac{ (2 b_0^2 + \sigma_f^2) + 8 \frac{ \sigma_g^2 L^2 }{\mu_g^2} }{2 L_y^2 L^2 \cdot K^{6/5} }, \\
			& \frac{1}{K} \sum_{k=1}^K \Delta_y^k \leq \frac{2 \Delta_y^0 + \frac{8 \sigma_g^2}{\mu_g^2} + \frac{4 L_y}{ \mu_g L } {\rm OPT}^0 }{K^{3/5}} + \frac{ 8 \frac{ \sigma_g^2 }{ \mu_g^2 } + \frac{ \mu_g (2b_0^2 + \sigma_f^2) }{ 2 L^2 } }{K^{2/5}}. 
		\end{split}
		\end{align*}
		Again, we emphasize that the two timescales step size design is crucial to establishing the above upper bounds.
		Now, recalling the properties of the Moreau envelop in \eqref{eq:moreaum}, we obtain the following descent estimate:
		\begin{Lemma} \label{lem:moreau}
			Under \Cref{ass:f}, \ref{ass:g}, \ref{ass:stoc}. Set $\rho > -\mu_\ell$, $\rho \geq 0$, then for any $k \geq 0$, it holds that
			\begin{align} \label{eq:descent:second_n}
			\begin{split}
				\EE[\Phi_{1/\rho}(x^{k+1}) - \Phi_{1/\rho}(x^{k})] & \leq \frac{5\rho}{2} \EE [\|x^{k+1}-x^k\|^2] + \Big[ \frac{2\alpha \rho L^2}{\rho+\mu_\ell} + 3 \alpha^2 \rho L^2 \Big] \Delta^{k+1}_y  \\
				& \hspace{-1cm} - \frac{(\rho+ \mu_\ell)\rho\alpha}{4} \EE [\|\hx^k-x^k\|^2] + \Big[ \frac{2\rho}{\rho+\mu_\ell} +  \rho ( \tilde{\sigma}_f^2 + 3 \alpha ) \Big] \alpha^2 .
			\end{split}
			\end{align}
		\end{Lemma}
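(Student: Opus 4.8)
The plan is to apply the standard ``three-point'' argument used in the analysis of the stochastic subgradient method for weakly convex functions (à la Davis–Drusvyatskiy, reference \cite{davis2018stochastic}), adapted to the biased inexact gradient $h_f^k$ used in {\sf TTSA}. First I would fix the iteration $k$ and introduce $\hx^k := \hat x(x^k) = \argmin_{x\in X}\{ \ell(x) + (\rho/2)\|x-x^k\|^2\}$. Using the definition of the Moreau envelope $\Phi_{1/\rho}$ in \eqref{eq:moreaum}, and the fact that $\hx^{k+1}$ is the \emph{minimizer} for the envelope at $x^{k+1}$, I would upper bound $\Phi_{1/\rho}(x^{k+1})$ by plugging in the suboptimal point $\hx^k$:
\[
\Phi_{1/\rho}(x^{k+1}) \le \ell(\hx^k) + \tfrac{\rho}{2}\|\hx^k - x^{k+1}\|^2,
\qquad
\Phi_{1/\rho}(x^{k}) = \ell(\hx^k) + \tfrac{\rho}{2}\|\hx^k - x^{k}\|^2.
\]
Subtracting gives $\Phi_{1/\rho}(x^{k+1}) - \Phi_{1/\rho}(x^k) \le \tfrac{\rho}{2}\big(\|\hx^k - x^{k+1}\|^2 - \|\hx^k - x^k\|^2\big)$. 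The next step is to expand $\|\hx^k - x^{k+1}\|^2$ using the update $x^{k+1} = P_X(x^k - \alpha h_f^k)$. By nonexpansiveness of the projection and the fact that $\hx^k \in X$, the cross term produces $-2\alpha\langle h_f^k, x^k - \hx^k\rangle$ plus $\|x^{k+1}-x^k\|^2$; after taking conditional expectation with respect to $\mathcal{F}_k'$, the gradient estimate is replaced by $\Bgrd f(x^k, y^{k+1}) + B_k$ by \Cref{ass:stoc}, so I get a term $-2\alpha\langle \Bgrd f(x^k,y^{k+1}), x^k - \hx^k\rangle - 2\alpha \langle B_k, x^k-\hx^k\rangle$.

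The heart of the argument is then to convert $-\langle \Bgrd f(x^k,y^{k+1}), x^k - \hx^k\rangle$ into a descent quantity. I would split $\Bgrd f(x^k,y^{k+1}) = \grd\ell(x^k) + \big(\Bgrd f(x^k,y^{k+1}) - \grd\ell(x^k)\big)$; the error term is controlled by \eqref{eq:lip:f:bar} in \Cref{lem:lips}, giving $\|\Bgrd f(x^k,y^{k+1}) - \grd\ell(x^k)\| \le L\|y^{k+1} - y^\star(x^k)\|$, which after a Young's inequality contributes an $\alpha L^2 \Delta_y^{k+1}$-type term and an $\alpha\|x^k-\hx^k\|^2$-type term. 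For the main term $-\langle \grd\ell(x^k), x^k - \hx^k\rangle$, I use weak convexity of $\ell$ with modulus $\mu_\ell$: the inequality $\ell(\hx^k) \ge \ell(x^k) + \langle \grd\ell(x^k), \hx^k - x^k\rangle + \mu_\ell\|\hx^k-x^k\|^2$ rearranges to $\langle \grd\ell(x^k), x^k-\hx^k\rangle \ge \ell(x^k) - \ell(\hx^k) + \mu_\ell\|\hx^k-x^k\|^2$. Then, crucially, since $\hx^k$ minimizes $\ell(\cdot) + (\rho/2)\|\cdot - x^k\|^2$ which is $(\rho+\mu_\ell)$-strongly convex (this is why the hypothesis $\rho > -\mu_\ell$ is needed), one has $\ell(x^k) \ge \ell(\hx^k) + \tfrac{\rho}{2}\|\hx^k-x^k\|^2 \cdot(\text{stuff})$... more precisely the strong-convexity gap bound gives $\ell(x^k) + 0 \ge \big[\ell(\hx^k)+\tfrac{\rho}{2}\|\hx^k-x^k\|^2\big] + \tfrac{\rho+\mu_\ell}{2}\|\hx^k-x^k\|^2$, i.e. $\ell(x^k) - \ell(\hx^k) \ge \tfrac{\rho + (\rho+\mu_\ell)}{2}\|\hx^k-x^k\|^2 - \tfrac{\rho}{2}\|\hx^k-x^k\|^2$, which after combining with the previous display yields $\langle\grd\ell(x^k), x^k-\hx^k\rangle \gtrsim (\rho+\mu_\ell)\|\hx^k-x^k\|^2$. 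This is the term that becomes the negative $-\tfrac{(\rho+\mu_\ell)\rho\alpha}{4}\EE\|\hx^k-x^k\|^2$ in the statement.

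Finally I would collect the pieces: the leftover $\tfrac{\rho}{2}\|x^{k+1}-x^k\|^2$ from the envelope expansion, plus additional $\|x^{k+1}-x^k\|^2$ contributions picked up from bounding cross terms and from the identity $\|x^{k+1}-\hx^k\|^2 = \|x^k-\hx^k\|^2 + 2\langle x^{k+1}-x^k, x^k-\hx^k\rangle + \|x^{k+1}-x^k\|^2$, which together should total at most $\tfrac{5\rho}{2}\EE\|x^{k+1}-x^k\|^2$; the bias term $-2\alpha\langle B_k, x^k-\hx^k\rangle \le \alpha(\text{Young}) \le \alpha^2\|B_k\|^2 + \alpha\cdot(\text{a small fraction of }\|\hx^k-x^k\|^2)$, using $\|B_k\|\le b_k$ and the step-size smallness, contributing to the $\alpha^2(\tilde\sigma_f^2 + 3\alpha)$ and $\tfrac{2\rho}{\rho+\mu_\ell}$ terms; and the variance/second-moment bound on $\EE[\|h_f^k\|^2\mid\mathcal{F}_k']$ from \eqref{eq:hfbd} to handle the $\EE\|x^{k+1}-x^k\|^2 \le \alpha^2\EE\|h_f^k\|^2$ estimate wherever a clean $\|x^{k+1}-x^k\|^2$ is not retained. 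Matching constants so that the coefficient of $\Delta_y^{k+1}$ comes out as $\tfrac{2\alpha\rho L^2}{\rho+\mu_\ell}+3\alpha^2\rho L^2$ and the additive term as $\big[\tfrac{2\rho}{\rho+\mu_\ell}+\rho(\tilde\sigma_f^2+3\alpha)\big]\alpha^2$ is the bookkeeping-heavy part. The main obstacle I anticipate is precisely this constant-tracking: keeping the negative $\|\hx^k-x^k\|^2$ coefficient from being eaten by the Young's-inequality splittings of the error term $L\|y^{k+1}-y^\star(x^k)\|$ and of the bias term $B_k$, which is exactly where the hypotheses $\rho+\mu_\ell>0$ and the implicit smallness of $\alpha$ (so that $\alpha L^2$, $\alpha$ are dominated by $(\rho+\mu_\ell)$) get used.
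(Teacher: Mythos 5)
Your proposal is correct and follows essentially the same route as the paper's proof: bound $\Phi_{1/\rho}(x^{k+1})$ by evaluating the envelope at the suboptimal point $\hx^k$, expand the squared distance via the update, decompose $h_f^k$ into true gradient, surrogate error (controlled by \eqref{eq:lip:f:bar}), and bias, and extract the negative $\|\hx^k-x^k\|^2$ term from the key inequality $\langle\grd\ell(x^k),x^k-\hx^k\rangle\ge\tfrac{\rho+\mu_\ell}{2}\|\hx^k-x^k\|^2$, which the paper obtains (as \eqref{eq:descent:M}) by adding the weak-convexity inequality to the prox-optimality inequality — the same fact you derive via strong convexity of the proximal subproblem. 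The remaining differences are only in bookkeeping (the paper uses the projection's variational inequality rather than nonexpansiveness to arrive at the $\tfrac{5\rho}{2}$ coefficient, and sets the Young's parameter to $c=(\rho+\mu_\ell)/2$), so the argument goes through as you outline.
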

		\noindent See details in \S\ref{sec:pfmoreau}. Summing up the inequality \eqref{eq:descent:second_n} from $k=0$ to $k=\Kmax-1$ gives the following upper bound:
		\begin{align*} \notag
		\begin{split}
			\frac{1}{\Kmax}\sum_{k=0}^{\Kmax-1} \EE[ \| \hat{x}(x^k) - x^k \|^2 ]
			& \leq \frac{4}{(\rho+\mu_\ell) \rho} \Big[ \frac{\Phi_{1/\rho}(x^0)}{\alpha \Kmax} + \frac{5\rho}{2\alpha \Kmax} \sum_{k=1}^{\Kmax} \EE[ \| x^k - x^{k-1} \|^2] \Big] \\
			& + \frac{4}{ \rho+\mu_\ell } \Big[ \frac{ \frac{ L^2 }{\rho + \mu_\ell} + 3 \alpha L^2 }{ \Kmax } \sum_{k=1}^{\Kmax} \Delta_y^k + \Big[ \frac{2}{\rho+\mu_\ell} + \tilde{\sigma}_f^2 + 3 \alpha \Big] \alpha \Big].
		\end{split}
		\end{align*}
		Combining the above and $\alpha \asymp \Kmax^{-3/5}$ yields the desired 
		$\frac{1}{\Kmax}\sum_{k=0}^{\Kmax-1} \EE[ \| \hat{x}(x^k) - x^k \|^2 ] = {\cal O}(\Kmax^{-2/5})$.
		In particular, the asymptotic bound is given by setting $\rho = 2 |\mu_\ell|$.
		
		\ifonlineapp
		\paragraph{Proof of \Cref{cor:c:c}} 
		We observe that \Cref{lemma:x:descent_n} can be applied directly in this setting since convex functions are also weakly convex. With the step size choice \eqref{eq:stepsize_cvx}, similar conclusions hold as:
		\begin{align*} \notag
%		\label{eq:double:descent_cvx}
		\begin{split}
			& \frac{1}{K} \sum_{k=1}^K \EE[ \| x^k - x^{k-1} \|^2 ] \leq \frac{ 2 {\rm OPT}^0 + \frac{L}{L_y} (\Delta_y^0 + 4 \sigma_g^2 / \mu_g^2 ) }{ L_y L \cdot  K^{7/4} } + \frac{ (2 b_0^2 + \sigma_f^2) + 8 \frac{ \sigma_g^2 L^2 }{\mu_g^2} }{2 L_y^2 L^2 \cdot K^{6/4} }, \\
			& \frac{1}{K} \sum_{k=1}^K \Delta_y^k \leq \frac{2 \Delta_y^0 + \frac{8 \sigma_g^2}{\mu_g^2} + \frac{4 L_y}{ \mu_g L } {\rm OPT}^0 }{K^{1/2}} + \frac{ 8 \frac{ \sigma_g^2 }{ \mu_g^2 } + \frac{ \mu_g (2b_0^2 + \sigma_f^2) }{ 2 L^2 } }{K^{1/2}}. 
		\end{split}
		\end{align*}
		With the additional property $\mu_\ell \geq 0$, in \S\ref{sec:pfcor} we further derive an alternative descent estimate to \eqref{eq:descent:non-convex:x_n} that leads to the desired bound of $K^{-1} \sum_{k=1}^K {\rm OPT}^k$.
		\fi

		\vspace{-0.1cm}
		\section{Application to Reinforcement Learning}\label{zwsec:RL}  
		\vspace{-0.1cm}
		Consider a Markov decision process (MDP) $(S, A, \gamma, P, r)$, where $S$ and $A$ are the state and action spaces, respectively, $\gamma \in [0,1)$ is the discount factor, $P(s'|s,a)$ is the transition kernel to the next state $s'$ given the current state $s$ and action $a$, and $r(s, a) \in [0,1]$ is the reward at $(s,a)$. 
		Furthermore, the initial state $s_0$ is drawn from a fixed distribution $\rho_0$.
		We follow a stationary policy $\pi : {S} \times A \rightarrow \RR$. For any $(s,a) \in S \times A$, $\pi(a|s) $ is the probability of the agent choosing action $a \in A$ at state $s \in S$. Note that a policy $\pi \in X$ induces a Markov chain on $S$. Denote the induced Markov transition kernel as $P^\pi$ such that $s_{t+1} \sim P^\pi( \cdot | s_t )$. For any $s, s' \in S$, we have $P^{\pi} (s' | s) = \sum_{a\in A} \pi(a | s)  P(s' | s,a)$. For any $\pi \in X$, $P^{\pi}$ is assumed to induce a stationary distribution over $S$, denoted by $\mu^{\pi} $. We assume that  $|A| < \infty$ while $|S|$ is possibly infinite (but countable). 
		To simplify our notations, for any distribution $\rho$ on $S$, we let $\langle \cdot , \cdot \rangle_\rho$ be the inner product with respect to $\rho$, and $\| \cdot \|_{\mu^\pi \otimes \pi}$ be the weighted $\ell_2$-norm with respect to the probability measure $\mu^\pi \otimes \pi$ over $S \times A$ (where $f,g$ are measurable functions on $S \times A$)
		\beqq \notag
		\langle f ,g \rangle_{\rho} = \sum_{s \in S} \langle f(s,\cdot) , g(s, \cdot) \rangle  \rho(s), ~~
		\| f\|_{\mu^\pi \otimes \pi} = \sqrt{ \sum_{s \in S} \big\{ \sum _{a \in A} \pi(a | s) \cdot [ f(s,a) ]^2 \big\} \mu^\pi (s) }.
		\eeqq
		
		In policy optimization, our objective is to maximize the expected total discounted reward received by the agent with respect to the policy $\pi$, i.e., 
		\beq \label{eq:policy_opt_new}  
		\max_{ \pi \in X \subseteq \RR^{|S| \times |A|} }~-\ell ( \pi ) = \EE_\pi \big[ \sum_{t \geq 0} \gamma^t \cdot r(s_t, a_t) \!~|\!~ s_0 \sim \rho_0 \big],
		\eeq
		where $\EE_\pi$ is the expectation with the actions taken according to policy $\pi$. Here we let $\rho_0$ to denote the distribution of the initial state. 
		To see that \eqref{eq:policy_opt_new} is approximated as a bilevel problem, set $P^\pi$ as the Markov operator under the policy $\pi$. We let $Q^{\pi}$ be the unique solution to the following Bellman  equation \cite{sutton2018reinforcement}:
		\beq \label{zweq:bellman}
		Q (s,a ) =
%		 r(s,a) + \gamma \EE_{s' \sim P(\cdot | s,a), a'\sim \pi(\cdot | s')} [ Q(s', a') ] =  
		 r(s,a) + \gamma (P^{\pi } Q)(s,a),~\forall~s,a \in S \times A.
		\eeq
		Notice that the following holds:
		\beqq \notag
		Q^\pi (s,a) = \EE_\pi \big[ \sum_{t \geq 0} \gamma^t r(s_t, a_t) | s_0 = s, a_0 = a \big],~ \EE_{a \sim \pi(\cdot|s)} [ Q^\pi (s,a) ] = \langle Q^\pi(s, \cdot) , \pi( \cdot | s ) \rangle.
		\eeqq
		Further, we  parameterize $Q$ using a linear approximation $Q(s,a) \approx Q_{\theta}(s,a) \eqdef \phi^\top(s,a) \theta$, where $\phi : S \times A \rightarrow \RR^d$ is a known feature mapping and $\theta \in \RR^d$ is a finite-dimensional parameter. 
		Using the fact that $\ell(\pi) = - \EE_{\pi} [ Q^\pi (s,a) ]$, problem \eqref{eq:policy_opt_new} can be approximated as a bilevel optimization problem such that:
		\beq \label{zweq:obj:2}
		\begin{array}{rl} 
		\ds \min_{\pi \in X \subseteq \RR^{|S|\times|A|}} & \ell(\pi) = - \langle Q_{\theta^\star(\pi)}, \pi\rangle_{\rho_0} \\
		\mbox{subject to} & \ds \theta^\star(\pi) \in \argmin_{ \theta \in \RR^d }~ {\textstyle \frac{1}{2}} \|Q_\theta - r - \gamma P^\pi Q_\theta \|_{\mu^\pi \otimes \pi}^2.
		\end{array}
		\eeq 
		
		\vspace{-0.4cm}
		\paragraph{Solving Policy Optimization Problem} We illustrate how to adopt the {\sf TTSA} algorithm to solve \eqref{zweq:obj:2}. First, the inner problem is the policy evaluation (a.k.a.~`critic') which minimizes the mean squared Bellman error (MSBE). A standard approach is TD learning \cite{sutton1988learning}. We draw two consecutive state-action pairs $(s,a,s',a')$  satisfying  $s \sim \mu^{\pi^k}$, $a\sim \pi^k (\cdot |s)$,  $s' \sim P(\cdot |s,a)$, and $a' \sim \pi^k(\cdot | s')$,
		and update the critic  via
		\beq
		\label{zweq:td_restate}
		\theta^{k+1} = \theta^k - \beta h_g^k ~~~~\text{with}~~~~ h_g^k = [ \phi^\top(s, a) \theta^k - r(s, a) - \gamma \phi^\top(s', a') \theta^k ] \phi(s, a),
		\eeq
		where $\beta$ is the step size.
		This step resembles \eqref{eq:y:update} of {\sf TTSA} except that the mean field $\EE[ h_g^k | {\cal F}_k ]$ is a semigradient of the MSBE function. 
% 		We shall demonstrate that our analysis can be applied with slight modifications.
		
		Secondly, the outer problem searches for the policy (a.k.a.~`actor') that maximizes the expected discounted reward. 
		To develop this step, let us define the visitation measure and the Bregman divergence as:
		\beqq \notag
		\rho^{\pi^k} (s) := (1 - \gamma)^{-1}  \sum_{t\geq 0} \gamma^t  \PP( s_t = s),~ 
		\bar{D}_{\psi, \rho^{\pi^k} }(\pi, \pi^k) := \sum_{s \in S} D_{\psi}\big(\pi(\cdot | s), \pi^k(\cdot | s)\bigr ) \rho^{\pi^k}(s), 
		\eeqq
		such that  $\{s_t \}_{t\geq 0}$ is a trajectory of states obtained by drawing $s_0 \sim \rho_0$ and following the policy $\pi^k$, and  $D_{\psi}$ is the Kullback-Leibler (KL) divergence between  probability distributions over $A$. We also define the following gradient surrogate:
		\beq \label{eq:other_surrogate}
		[ \overline{\grd}_{\pi} f( \pi^k, \theta^{k+1})] (s,a)  = -(1-\gamma)^{-1} Q_{\theta^{k+1}}(s, a) \rho^{\pi^k}(s),~\forall~(s,a).
		\eeq		
		Similar to \eqref{eq:bar:gradient} and under the additional assumption that the linear approximation is exact, i.e., $Q_{\theta^\star(\pi^k)} = Q^{\pi^k}$, we can show $\overline{\grd}_{\pi} f( \pi^k, \theta^\star(\pi^k) ) = \grd \ell(\pi^k)$ using the policy gradient theorem \cite{sutton2018reinforcement}.
		In a similar vein as \eqref{eq:x:update} in {\sf TTSA}, we consider the mirror descent step for improving the policy (cf.~proximal policy optimization in \cite{schulman2017proximal}):
		\begin{align}
		\pi^{k+1} & = \argmin_{\pi \in X} \Bigl\{-(1-\gamma)^{-1}\langle Q_{\theta^{k+1}}, \pi-\pi^k \rangle_{\rho^{\pi^k} } + \frac{1}{\alpha} \bar{D}_{\psi, \rho^{\pi^k} }(\pi, \pi^k)\Bigr\}, \label{zweq:ppo_restate_n}
		\end{align}
		where $\alpha $ is the step size. Note that the above update can be performed as:
		\beqq \notag
		\pi^{k+1}(\cdot|s) \propto \pi^k(\cdot|s)  \exp\big[ \alpha_k (1-\gamma)^{-1}   Q_{\theta^{k+1}}(s,\cdot) \big] = \pi^0(\cdot|s) \exp \big[ (1-\gamma)^{-1} \phi(s,\cdot)^\top \sum_{i=0}^k \alpha \theta^{i+1}\big].
		\eeqq
		In other words, $\pi^{k+1}$ can be represented using    the running sum of critic $\sum_{i=0}^k \alpha \theta^{i+1}$. This is similar to the natural policy gradient method \cite{kakade2002natural}, and the algorithm requires a low memory footprint.
		Finally, the recursions \eqref{zweq:td_restate}, \eqref{zweq:ppo_restate_n} give the two-timescale natural actor critic ({\sf TT-NAC}) algorithm.
		
		 \vspace{-0.1cm}
		\subsection{Convergence Analysis of \acm}  
		Consider the following assumptions on the MDP model of interest.
		\begin{assumption}
			\label{ass:bdd_reward}
			\vspace{-0.1cm}
			The reward function is uniformly bounded by a constant $\overline r$. That is, $|r(s,a)| \leq \overline{r}$ for all $(s,a) \in S \times A$. 
% 			The action space is finite, i.e., $|A| < \infty$.  
		\end{assumption}
		
		\begin{assumption} \label{ass:linear_assumption} 
			The feature map $\phi \colon S \times A \rightarrow \RR^d$ satisfies $\| \phi(s,a) \|_2 \leq 1$ for all $(s,a) \in S \times A$. 
			The action-value function associated with each policy is a linear function of $\phi$. 
			That is, for  any policy $\pi \in X$,  there exists $\theta^{\star} (\pi)  \in \RR^d$ such that $Q^{\pi} (\cdot, \cdot)  = \phi (\cdot, \cdot) ^\top \theta^\star (\pi)  = Q_{ \theta^\star( \pi ) } (\cdot, \cdot) $.
		\end{assumption}
		
		\begin{assumption} \label{ass:stationary}
			For each policy $\pi \in X$, the induced Markov chain $P^{\pi}$ admits a  unique stationary distribution $\mu^{\pi}$ for all $\pi \in X$.
			Let there exists $\mu_\phi > 0$ such that $\EE_{s \sim \mu^{\pi}, a \sim \pi(\cdot|s) }  [\phi(s,a)  \phi (s,a)^\top ] \succeq \mu_{\phi} ^2 \cdot I_d$ for all $\pi \in X$.
		\end{assumption}
		
		\begin{assumption} \label{ass:concentrability}
			For any $(s,a) \in S \times A$ and any $\pi \in X$, let $ \varrho(s,a, \pi) $ be a probability measure over $S$,  defined by 
			\begin{align} \textstyle
			[\varrho(s,a, \pi) ] (s' ) = (1 - \gamma)^{-1} \sum_{t\geq 0} \gamma ^t \cdot \PP(s_t  = s'), \qquad \forall s' \in S.  \label{eq:define_sa_visitation}  
			\end{align} 
			That is, $\varrho(s,a, \pi) $ is the visitation measure induced by the Markov chain starting from  $(s_0, a_0 ) = (s,a)$ and follows $\pi$ afterwards. 
			For any $\pi^\star$, there exists $C_{\rho} > 0$ such that 
			$$ 
			\EE_{s'\sim \rho^{\star} } \bigg[  \biggl |  \frac{  \varrho (s,a,\pi) }{ \rho^{ \star}} (s')    \bigg| ^2  \bigg]  \leq C_{\rho} ^2, \quad \forall~(s,a) \in S \times A,  \; \pi \in X.
			$$
		{Here we let $\rho^{\star}$ denote $\rho^{\pi^\star}$ to simplify the notation, which is the visitation measure induced by $\pi^\star$ with $s_0 \sim \rho_0$.}
		\end{assumption}
{\blue We remark that  \Cref{ass:bdd_reward} is standard in the reinforcement learning literature \cite{sutton2018reinforcement,szepesvari2010algorithms}. In \Cref{ass:linear_assumption}, we assume that each $Q^{\pi}$ is linear which implies that  the linear function approximation is exact. 
A sufficient condition for  \Cref{ass:linear_assumption} is that the underlying MDP is a linear MDP \cite{yang2019sample, jin2020provably}, where both the reward function and Markov transition kernel are linear in $\phi$. Linear MDP contains the tabular MDP as a special case, where the feature mapping $\phi(s,a)$ becomes the canonical vector in $\RR^{S\times A}$. \Cref{ass:stationary} assumes that the stationary distribution $\mu^{\pi} $ exists for any policy $\pi$, which is a common property for the MDP analyzed in TD learning, e.g., \cite{dann2014policy, bhandari2018finite}. \Cref{ass:stationary} further requires the smallest eigenvalue of  $\Sigma_{\pi}$ to be bounded uniformly away from zero. Such an assumption is commonly made in the literature on policy evaluation with linear function approximation, e.g., \cite{bhandari2018finite, liu2018proximal}.
Finally, \Cref{ass:concentrability} postulates that $\rho^\star $ is regular such that the density ratio between $\varrho(s,a, \pi)$ and $\rho^\star$ has uniformly bounded second-order moments under $\rho^\star$. Such an assumption is closely related to the concentratability coefficient \cite{munos2008finite,antos2008learning,agarwal2019optimality}, 
which characterizes  the distribution shift incurred by policy updates and is conjectured essential for the sample complexity analysis of   reinforcement learning methods \cite{chen2019information}. \Cref{ass:concentrability} is satisfied if the initial distribution $\rho_0$ has lower bounded density over $S\times A$  \cite{agarwal2019optimality}. \ifonlineapp
For details, please refer to Appendix~\ref{app:just}.
\else
For details, please refer to our online appendix \cite{Hong-TTSA-2020}.
\fi}
				
		To state our main convergence results, let us define the quantities of interest:
		\begin{align}
		\Delta^{k+1}_Q : =\mathbb{E}[\| \theta^{k+1} - \theta^{\star} (\pi^k) \|_2^2], \quad \mbox{OPT}^k : = \EE[ \ell( \pi^k ) - \ell( \pi^\star ) ],  \label{eq:define_rl_tracking}
		\end{align} 
		where the expectations above are taken with respect to the i.i.d.~draws of state-action pairs in \eqref{zweq:td_restate} for \acm. 
		We remark that $\Delta_Q^k$, analogous to $\Delta_y^k$ used in ${\sf TTSA}$, is the tracking error that characterizes the performance of TD learning when the target value function, $Q^{\pi^k}$, is  time-varying due to policy updates. 
		We obtain: 
% 		the following finite-time convergence results for \acm:
		\begin{Theorem} \label{cor:rl}
			Consider the \acm~algorithm \eqref{zweq:td_restate}-\eqref{zweq:ppo_restate_n} for the policy optimization problem \eqref{zweq:obj:2}.  Let $\Kmax \geq 32^2$ be the maximum number of iterations. Under \Cref{ass:bdd_reward} -- \ref{ass:concentrability}, and we set the step sizes as
			\beq \label{eq:alpha:convex:new}
			\alpha = \frac{(1-\gamma)^3 \mu_\phi }{\sqrt{ \overline r \cdot C_{\rho}^2}} \min \Big\{ \frac{(1-\gamma)^2}{128\mu_\phi^{-2}}, \Kmax^{-3/4} \Big\},~ \beta=\min\left\{\frac{(1 - \gamma) \mu_\phi ^2}{8}
% 			, \; \frac{1}{2(1 - \gamma) \mu_\phi ^2}
			,\frac{16}{(1 - \gamma) \mu_\phi ^2} {\Kmax^{-1/2}}\right\}.
			\eeq
			Then the following holds
			\beqq
			\EE[ \mbox{\rm OPT}^{\sf K} ] 
			= {\cal O}(\Kmax^{-1/4}), \quad \EE[ \Delta_Q^{{\sf K} + 1} ] = {\cal O}(\Kmax^{-1/2}),
			\eeqq
			where ${\sf K}$ is an independent random variable uniformly distributed over    $\{0,...,\Kmax-1\}$.
		\end{Theorem}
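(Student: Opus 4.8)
The plan is to recast the policy-optimization problem \eqref{zweq:obj:2} as an instance of the bilevel problem \eqref{eq:bilevel} and to follow the template of \Cref{th:wc:c}--\Cref{cor:c:c}, with three actor--critic-specific adjustments. First, the inner update \eqref{zweq:td_restate} is a TD \emph{semigradient} step, so its mean field is $\EE[h_g^k\mid{\cal F}_k]=A_{\pi^k}\theta^k-b_{\pi^k}$ with $A_\pi=\EE_{s\sim\mu^\pi,\,a\sim\pi(\cdot|s)}[\phi(s,a)(\phi(s,a)-\gamma\,\EE_{s',a'}\phi(s',a'))^\top]$; since $(s',a')$ is again stationary, \Cref{ass:stationary} gives $A_\pi+A_\pi^\top\succeq 2(1-\gamma)\mu_\phi^2 I$, which plays the role of strong convexity of the inner problem, while the i.i.d.\ sampling assumed in \Cref{cor:rl} together with \Cref{ass:bdd_reward}, \ref{ass:linear_assumption} makes $h_g^k$ a bounded-variance unbiased estimate, matching \Cref{ass:stoc}. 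Second, the outer update \eqref{zweq:ppo_restate_n} is a KL mirror-descent (natural-gradient) step in place of the Euclidean projection \eqref{eq:x:update}. Third, the surrogate is \eqref{eq:other_surrogate} rather than \eqref{eq:bar:gradient}; under the exact-parametrization \Cref{ass:linear_assumption} the policy gradient theorem gives $\overline{\grd}_\pi f(\pi,\theta^\star(\pi))=\grd\ell(\pi)$ and $\|\overline{\grd}_\pi f(\pi,\theta)-\grd\ell(\pi)\|\lesssim\|\theta-\theta^\star(\pi)\|$, the analogue of the constant $L$ in \Cref{lem:lips}. A preliminary task is to verify that \eqref{zweq:obj:2} inherits the regularity needed to run the {\sf TTSA} machinery: boundedness of $\|Q_\theta\|$ comes from \Cref{ass:bdd_reward}, \ref{ass:linear_assumption}, while Lipschitz continuity of $\pi\mapsto\theta^\star(\pi)=A_\pi^{-1}b_\pi$ and smoothness of $\ell(\pi)$ follow from boundedness of $\phi$ together with sensitivity bounds on the stationary distribution $\mu^\pi$ in $\pi$, using the ergodicity implicit in \Cref{ass:stationary}.

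With these ingredients I would build the two coupled one-step inequalities, exactly as in \Cref{lemma:x:descent_n}. For the critic, strong monotonicity of $A_{\pi^k}$, the step-size restriction $\beta\lesssim(1-\gamma)\mu_\phi^2$, the variance bound on $h_g^k$, and the Lipschitz bound on $\theta^\star(\cdot)$ give a recursion of the form $\Delta_Q^{k+1}\le(1-c\beta)\Delta_Q^k+C\beta^{-1}\EE[\|\pi^k-\pi^{k-1}\|^2]+C'\beta^2$. For the actor, the descent lemma for a mirror step on the $L_f$-smooth $\ell$, combined with the optimality condition of \eqref{zweq:ppo_restate_n} and the surrogate-error bound above, gives (for $\alpha\lesssim 1/L_f$) a recursion ${\rm OPT}^{k+1}\le{\rm OPT}^k-c_0\EE[\|\pi^{k+1}-\pi^k\|^2]+c_1\alpha\Delta_Q^{k+1}+c_2\alpha^2$. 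These fit the template \eqref{eq:recurnew} with $\Omega^k={\rm OPT}^k$, $\Upsilon^k=\Delta_Q^k$, $\Theta^k=\EE[\|\pi^k-\pi^{k-1}\|^2]$. It then remains to check that the step sizes \eqref{eq:alpha:convex:new}, which impose the two-timescale separation ``$\alpha/\beta$ small'', make the conditions $c_0-c_1 d_1/d_0>0$ and $d_0-d_1 c_1/c_0>0$ of \Cref{lem:recur_lem} hold; applying \Cref{lem:recur_lem} with $\alpha\asymp\Kmax^{-3/4}$, $\beta\asymp\Kmax^{-1/2}$ then yields $\frac1\Kmax\sum_k\EE[\|\pi^k-\pi^{k-1}\|^2]={\cal O}(\Kmax^{-3/2})$ and $\frac1\Kmax\sum_k\Delta_Q^k={\cal O}(\Kmax^{-1/2})$, the latter giving $\EE[\Delta_Q^{{\sf K}+1}]={\cal O}(\Kmax^{-1/2})$.

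It remains to bound $\frac1\Kmax\sum_k{\rm OPT}^k$, using the ``hidden convexity'' of policy optimization and paralleling the alternative descent estimate behind \Cref{cor:c:c}. By the performance difference lemma, ${\rm OPT}^k=-\tfrac{1}{1-\gamma}\EE_{s\sim\rho^\star,\,a\sim\pi^\star}[\langle Q^{\pi^k}(s,\cdot),\pi^\star(\cdot|s)-\pi^k(\cdot|s)\rangle]$. Since the per-state update \eqref{zweq:ppo_restate_n} is unaffected by the $\rho^{\pi^k}$-weighting, the three-point identity for the KL divergence with comparator $\pi^\star$ bounds $\langle Q_{\theta^{k+1}}(s,\cdot),\pi^\star(\cdot|s)-\pi^k(\cdot|s)\rangle$ by $\tfrac1\alpha[D_\psi(\pi^\star(\cdot|s),\pi^k(\cdot|s))-D_\psi(\pi^\star(\cdot|s),\pi^{k+1}(\cdot|s))]$ plus a remainder that, using boundedness of $\|Q_{\theta^{k+1}}\|_\infty$, contributes ${\cal O}(\alpha)$. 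Replacing $Q_{\theta^{k+1}}$ by $Q^{\pi^k}=Q_{\theta^\star(\pi^k)}$ (exact under \Cref{ass:linear_assumption}) costs an error $\lesssim\|\theta^{k+1}-\theta^\star(\pi^k)\|$; integrating it against $\rho^\star$ and using Cauchy--Schwarz with the density-ratio bound of \Cref{ass:concentrability} turns it into $C_\rho\sqrt{\Delta_Q^{k+1}}$ measured under $\mu^{\pi^k}\otimes\pi^k$, precisely the object the TD analysis controls. Weighting by $\rho^\star$, summing over $k=0,\dots,\Kmax-1$ so the KL terms telescope, and applying Jensen to $\sqrt{\cdot}$, I obtain $\frac1\Kmax\sum_k{\rm OPT}^k\lesssim\frac{\bar D_{\psi,\rho^\star}(\pi^\star,\pi^0)}{\alpha\Kmax}+C_\rho\sqrt{\frac1\Kmax\sum_k\Delta_Q^{k+1}}+{\cal O}(\alpha)$. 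With $\alpha\asymp\Kmax^{-3/4}$ and the ${\cal O}(\Kmax^{-1/2})$ bound on $\frac1\Kmax\sum_k\Delta_Q^{k+1}$, the first and second terms are ${\cal O}(\Kmax^{-1/4})$ and the third is ${\cal O}(\Kmax^{-3/4})$, hence $\EE[{\rm OPT}^{\sf K}]={\cal O}(\Kmax^{-1/4})$.

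The crux, I expect, is interfacing the critic error with the performance-difference bound: the critic in \eqref{zweq:td_restate} is fitted against the mean-squared Bellman error under $\mu^{\pi^k}\otimes\pi^k$, whereas ${\rm OPT}^k$ is expressed through an expectation under $\rho^\star$, so the argument hinges on \Cref{ass:concentrability} being exactly strong enough to bridge the two, together with the weighting-invariance of the per-state KL step so that the divergence terms telescope despite the moving weights $\rho^{\pi^k}$. A secondary, more mechanical obstacle is the regularity verification for \eqref{zweq:obj:2} --- chiefly Lipschitz continuity of $\pi\mapsto\theta^\star(\pi)$ and smoothness of $\ell$ --- which reduces to quantifying the sensitivity of $\mu^\pi$ to $\pi$ under the ergodicity built into \Cref{ass:stationary}.
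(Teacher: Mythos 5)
Your overall architecture matches the paper's proof almost exactly: two coupled one-step inequalities (actor descent via the performance difference lemma and the mirror-descent optimality condition; critic contraction via the TD semigradient's strong monotonicity with $\mu_{\sf td}=(1-\gamma)\mu_\phi^2$ and the Lipschitz continuity of $\theta^\star(\cdot)$), resolved by \Cref{lem:recur_lem} to get ${\cal O}(\Kmax^{-3/2})$ on the policy increments and ${\cal O}(\Kmax^{-1/2})$ on $\Delta_Q$, followed by a three-point/telescoping argument with comparator $\pi^\star$ to convert these into $\EE[{\rm OPT}^{\sf K}]={\cal O}(\Kmax^{-1/4})$. Your final accounting of the three terms is also the same as the paper's (the paper keeps the $2L_Q\|\pi^{k+1}-\pi^k\|_{\rho^\star,1}$ remainder and controls it by Cauchy--Schwarz against the ${\cal O}(\Kmax^{-3/2})$ increment bound, rather than absorbing it as ${\cal O}(\alpha)$, but either bookkeeping closes).

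There is, however, one genuine gap, and it sits precisely where you dismiss the issue as ``secondary and mechanical'': the regularity verification. You propose to get Lipschitz continuity of $\pi\mapsto\theta^\star(\pi)=A_\pi^{-1}b_\pi$ and $L_f$-smoothness of $\ell(\pi)$ from ``sensitivity bounds on the stationary distribution $\mu^\pi$ in $\pi$, using the ergodicity implicit in \Cref{ass:stationary}.'' \Cref{ass:stationary} only asserts existence and uniqueness of $\mu^\pi$ and a uniform eigenvalue lower bound on $\EE_{\mu^\pi\otimes\pi}[\phi\phi^\top]$; it carries no uniform mixing-rate or perturbation information, so no quantitative bound on $\|\mu^{\pi_1}-\mu^{\pi_2}\|$ (and hence on $\|A_{\pi_1}-A_{\pi_2}\|$) follows from the stated assumptions. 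The paper's \Cref{lem:rl_lip} avoids this entirely: it bounds $Q^{\pi_1}-Q^{\pi_2}$ pointwise via the performance difference lemma, written as an expectation over the visitation measure $\varrho(s,a,\pi_1)$, then uses \Cref{ass:concentrability} (Cauchy--Schwarz against the density ratio with $\rho^\star$) and \Cref{ass:bdd_reward} to get $\|Q^{\pi_1}-Q^{\pi_2}\|_{\rho^\star,\infty}\le(1-\gamma)^{-2}\overline{r}\,C_\rho\|\pi_1-\pi_2\|_{\rho^\star,1}$, and finally converts this into $\|\theta^\star(\pi_1)-\theta^\star(\pi_2)\|_2$ through the eigenvalue lower bound of \Cref{ass:stationary} applied at $\pi^\star$. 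Likewise, the paper never invokes $L_f$-smoothness of $\ell$; the actor ``descent'' inequality comes from the exact performance-difference identity for $\ell(\pi^{k+1})-\ell(\pi^k)$, not from a quadratic upper bound. So your claim that the needed constants exist is correct, but the route you sketch for establishing them would not go through under the paper's hypotheses; you would need to rederive them the way \Cref{lem:rl_lip} does. A smaller related point: the critic error $\|Q_{\theta^{k+1}}-Q^{\pi^k}\|_{\rho^\star,\infty}$ is bounded by $\|\theta^{k+1}-\theta^\star(\pi^k)\|_2$ directly from $\|\phi\|\le 1$, uniformly in the weighting measure, so no concentrability argument is needed to ``bridge'' $\mu^{\pi^k}\otimes\pi^k$ and $\rho^\star$ at that step --- the distribution shift enters only through \Cref{lem:rl_lip}.
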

				\vspace{-0.2cm}
		To shed lights on our analysis, we first observe the following performance difference lemma proven in \cite[Lemma 6.1]{kakade2002approximately}: 
		\beq \label{eq:key:1}
		\ell(\pi) - \ell(\pi^\star) =  (1-\gamma)^{-1}  \langle Q^\pi, \pi^\star   - \pi  \rangle_{\rho^{\pi^\star}}, \qquad \forall \pi \in X, 
		\eeq
		where $\pi^\star$ is an optimal policy solving \eqref{zweq:obj:2}.
		The above implies a restricted form of convexity, and our analysis uses the insight that \eqref{eq:key:1} plays a similar role as \eqref{eq:weakly:convex} [with $\mu_\ell \geq 0$] and characterizes the loss geometry of the outer problem.
		
Our result shows that the \acm~algorithm finds an optimal policy at the rate of ${\cal O}(K^{-1/4})$ in terms of the objective value. This rate is comparable to another variant of the \acm~algorithm in \cite{xu2020non}, which provided a customized analysis for \acm.
% 		which is analyzed under similar conditions to ours, yet their analysis lacks
% 		their samples are drawn from a single trajectory while assuming uniform ergodicity on the induced Markov chains. 
In contrast, the analysis for our \acm~algorithm is rooted in the general {\sf TTSA} framework developed in \S\ref{sec:gen_ana} for tackling bilevel optimization problems. Notice that analysis for the two-timescale actor-critic algorithm can also be found in \cite{wu2020finite}, which provides an ${\cal O}(K^{-2/5})$ convergence rate to a stationary solution. \vspace{-0.2cm}

\section{Numerical Experiments} 
We consider the data hyper-cleaning task \eqref{eq:clean}, and compare TTSA~with several algorithms such as the BSA algorithm \cite{ghadimi2018approximation}, the stocBiO \cite{Ji_ProvablyFastBilevel_Arxiv_2020} for different batch size choices, and the HOAG algorithm in \cite{Pedregosa_ICML_2016}. %All the parameter settings are the same as in Section \ref{Sec: Experiments}, except that we use a higher level 40$\%$ corruption rate. 
Note that HOAG is a deterministic algorithm and it requires full gradient computation at each iteration. In contrast, stocBiO is a stochastic algorithm but it relies on large batch gradient computations.

%Note that in \cite{Ji_ProvablyFastBilevel_Arxiv_2020}, the authors shown that stocBio exhibits better practical performance compared with other bilevel optimization algorithms.

We consider problem \eqref{eq:clean} with $L(\cdot)$ being the cross-entropy loss (i.e., a data cleaning problem for logistic regression); $ \sigma(x) := \frac{1}{1 + \exp(-x)} $; $c = 0.001$; see \cite{shaban2019truncated}. The problem is trained on the {\tt FashionMNIST} dataset \cite{FashionMNIST_Xiao_2017} with $50$k, $10$k, and $10$k image samples allocated for training, validation and testing purposes, respectively. We consider the setting where each sample in the training dataset is corrupted with  probability $0.4$. Note that the outer problem $\ell(x)$ is non-convex while the lower level problem is strongly-convex. The simulation results are an average of $3$ independent runs. 
The step sizes for different algorithms are chosen according to their theoretically suggested values. Let the outer iteration be indexed by $t$, for TTSA~we choose $\alpha_t = c_\alpha/(1 + t)^{3/5}, \; \beta_t = c_\beta/(1 + t)^{2/5},$ and tune for $c_{\alpha}$ and $c_\beta$ in the set $\{10^{-3}, 10^{-2},10^{-1},10\}$. For BSA \cite{ghadimi2018approximation}, we index the outer iteration by $t$ and the inner iteration by $\bar{k} \in \{1, \ldots, \bar{k}_t\}$. We set $\bar{k}_t = \lceil \sqrt{t + 1}\rceil$ as suggested in \cite{ghadimi2018approximation} and choose the outer and inner step-sizes  $\alpha_t$ and $\beta_{\bar{k}}$, respectively, as $\alpha_t = d_\alpha/(1 + t)^{1/2}$ and $\beta_{\bar{k}} = {d}_\beta/(\bar{k}+ 2)$. We tune for $d_{\alpha}$ and $d_\beta$ in the set $\{10^{-3}, 10^{-2},10^{-1},10\}$. Finally, for stocBiO we tune for parameters $\alpha_t$ and $\beta_t$ in the range $[0, 1]$. 

In Figure \ref{exper: data-cleaning:1},
we compare the performance of different algorithms against the total number of outer samples accessed. As observed, TTSA outperforms BSA, stocBiO and HOAG. We remark that HOAG is a deterministic algorithm and hence requires full batch gradient computations at each iteration.  Similarly, stocBio relies on large batch gradients which  results in relatively slow convergence.

\begin{figure}[t] 
\centering
\includegraphics[width=0.495\linewidth]{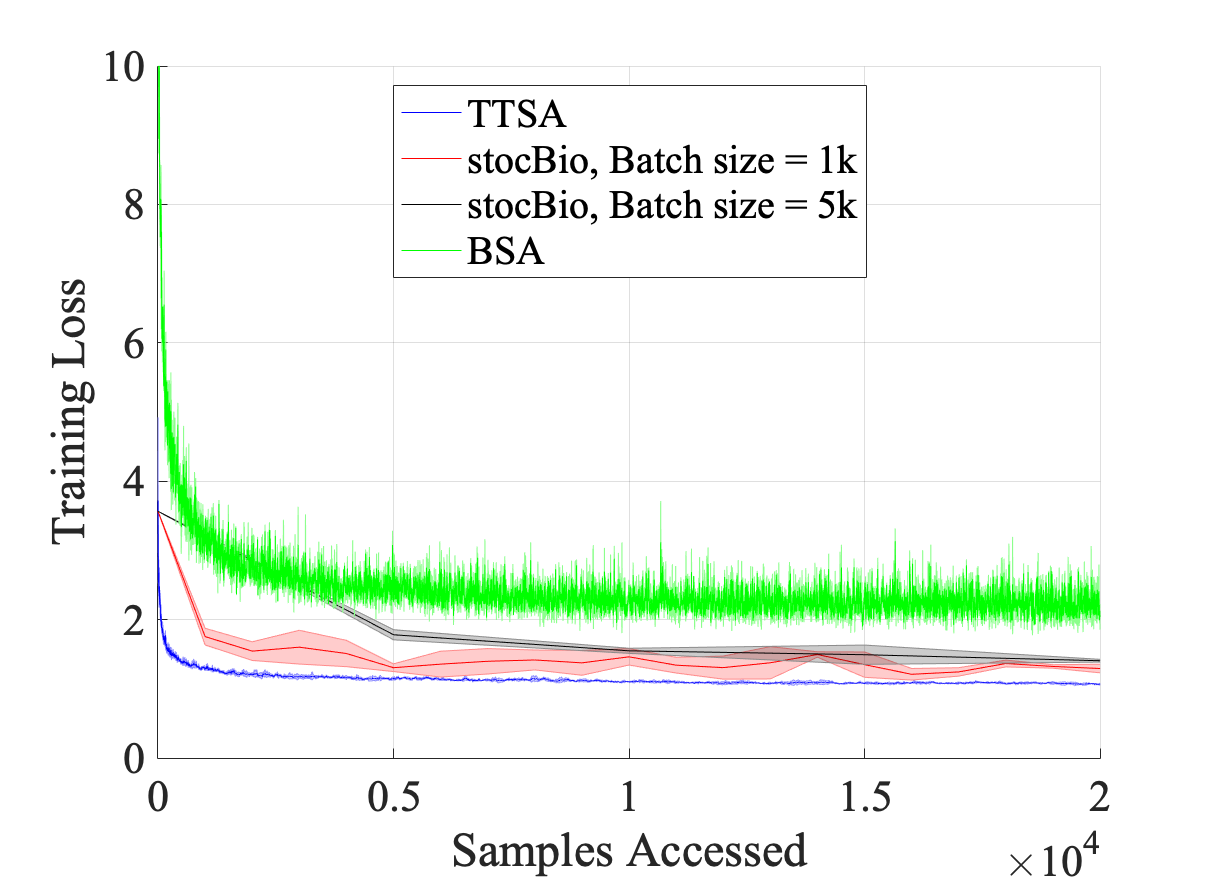}
\includegraphics[width=0.495\linewidth]{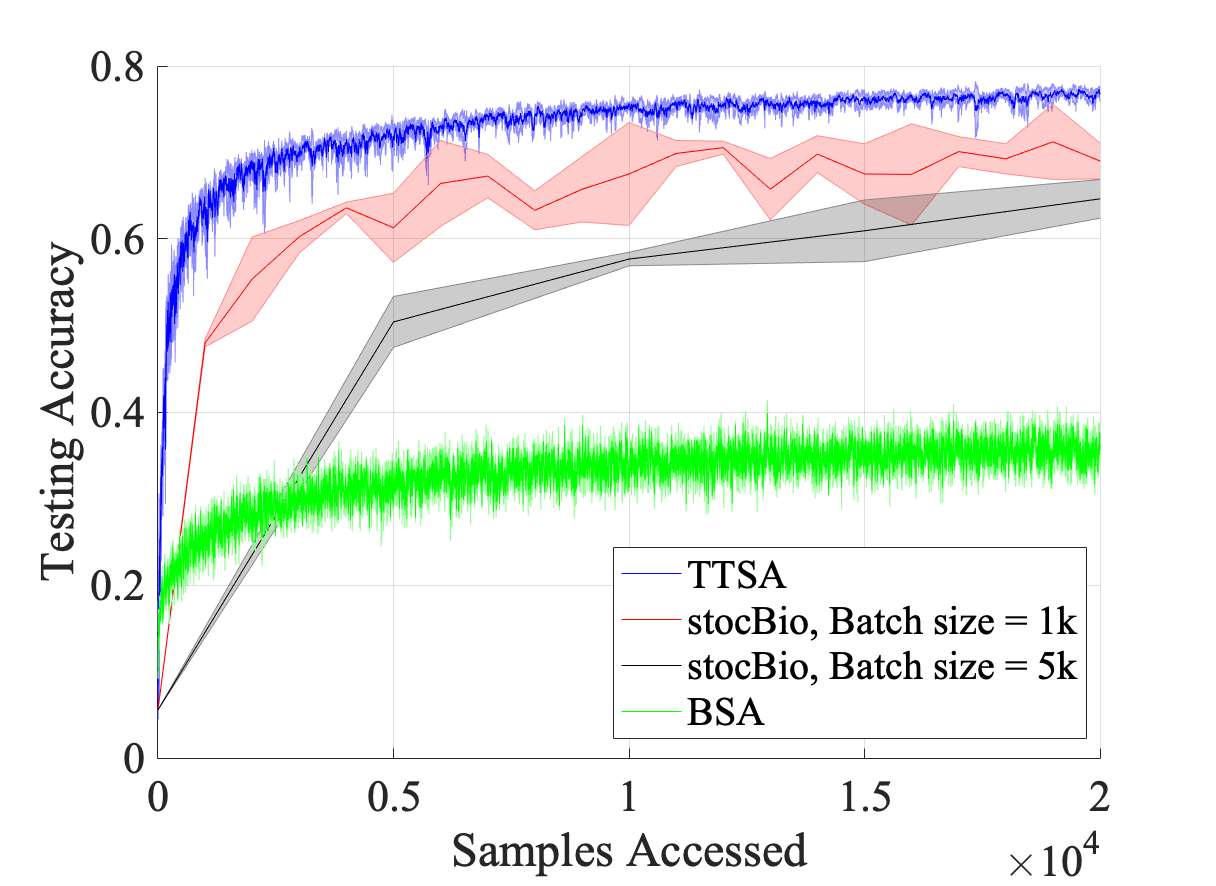}
\caption{Data hyper-cleaning task on the {\tt FashionMNIST} dataset. We plot the training loss and testing accuracy against the number of gradients evaluated with corruption rate {$p = 0.4$}.}
\label{exper: data-cleaning:1}
\end{figure}

\section{Conclusion} 
This paper develops efficient two-timescale stochastic approximation algorithms for a class of bi-level optimization problems where the inner problem is unconstrained and strongly convex. We show the convergence rates of the proposed {\sf TTSA} algorithm under the settings where the outer objective function is either strongly convex, convex, or non-convex.  Additionally, we show how our theory and analysis can be customized to a two-timescale actor-critic proximal policy optimization algorithm in reinforcement learning, and obtain a comparable convergence rate to existing literature. 
		
\appendix
		
		% !TEX root = Bilevel_Arxiv.tex

\vspace{-0.2cm}
\section{Omitted Proofs of \Cref{th:sc:uc}} \label{app:scuc}
To simplify notations, for any $n,m \in \NN$, we define the following quantities for brevity of notations. 
\beq \label{eq:define_operators}
G_{m:n}^{(1)} = \prod_{i=m}^n ( 1 - \beta_i \mu_g / 4), \quad G_{m:n}^{(2)} = \prod_{i=m}^n ( 1- \alpha_i \mu_\ell ). 
\eeq

% \paragraph{Choice of Step Size Parameters} The step sizes are chosen as
% $\alpha_k = c_\alpha / ( k + k_\alpha), \beta_k = c_\beta / ( k + k_\beta)^{2/3}$, and a possible option is to set 
% \beq \label{eq:kalpha}
% k_\alpha = \max\left\{ 35 \Big( \frac{L_g}{\mu_g} \Big)^3 (1 + \sigma_g^2)^{3/2}, \frac{ (512)^{3/2} L^2 L_y^2 }{ \mu_\ell^2} \right\} ,\quad c_\alpha = \frac{8}{3 \mu_\ell},\quad
% % \eeq
% % \beq \label{eq:kbeta}
% k_\beta = \frac{1}{4} k_\alpha, \quad c_\beta = \frac{32}{3 \mu_g}.
% \eeq
% The above choice satisfies \eqref{eq:stepsize0} with ${\rm c}_0 = \frac{\mu_g^{3/2}}{\mu_\ell}$, ${\rm c}_1 = 10 \cdot \frac{\mu_\ell^{2/3}}{\mu_g} $ as well as \eqref{eq:stepsize1}.
% As stated in the theorem, the bias in estimating the outer gradient decays as $b_k \leq   {\rm c}_b k^{-1/2}$, 
% it is easy to check that \eqref{eq:bias_decay} holds.

% \paragraph{Convergence of Tracking Error $\Delta^{k+1}_y$.} Consider

% \noindent With the step size parameters in \eqref{eq:kalpha}, it can be shown that \beqq
% \Delta_y^{k+1} = {\cal O}\Big( \Big[ \frac{\sigma_g^2}{\mu_g} + \frac{ \mu_g L_y^2}{ \mu_\ell^2} \big[ \tilde\sigma_f^2 + 3 b_0^2 \big] \Big] \cdot \frac{1}{k^{2/3}} \Big).
% \eeqq
\vspace{-0.4cm}
\subsection{Proof of \Cref{lemma:refined_dy}} \label{sec:pflem31}
% By the updating rule for $y^{k+1}$, we have
% \beq \label{eq:apply_y_update}
% \begin{split}
% \| y^{k+1} - y^\star(  x^k) \|^2 = \| y^k - y^\star(x^k) \|^2 - 2 \beta_k \pscal{ h_g^k }{ y^k - y^\star(x^k)} + \beta_k^2 \| h_g^k \|^2.
% \end{split}
% \eeq
Following a direct expansion of the updating rule for $y^{k+1}$ and taking the conditional expectation given filtration $\cF_k$ yield that 
\begin{align*}
\begin{split}
\EE[ \| y^{k+1} - y^\star(  x^k) \|^2 | {\cal F}_k ] 
%& \stackrel{(i)}= \| y^k - y^\star(x^k) \|^2 - 2 \beta_k \pscal{ \grd g(x^k,y^k) }{ y^k - y^\star(x^k) } + \beta_k^2 \EE[ \| h_g^k \|^2 | {\cal F}_k ] \\
& \leq ( 1 - 2 \beta_k \mu_g ) \| y^k - y^\star(x^k) \|^2 + \beta_k^2 \EE[ \| h_g^k \|^2 | {\cal F}_k ],
\end{split}
\end{align*}
where we used  the unbiasedness of $h_g^k$ [cf.~\Cref{ass:stoc}] and the strong convexity of $g$.
By direct computation and \eqref{eq:property:hg} in \Cref{ass:stoc}, 
we have 
\begin{align}
\EE[ \| h_g^k \|^2 | {\cal F}_k ] &  = \EE \bigl [ \| h_g^k -  \grd g( x^k, y^k ) \|^2 \big | {\cal F}_k \bigr ] + 
\| \grd g( x^k, y^k )  \|^2 \notag \\
& \leq \sigma_g^2 + (1 + {\sigma_g^2}) \| \grd g( x^k, y^k ) \|^2 \leq \sigma_g^2 + (1 + \sigma_g^2) \cdot  L_g^2 \| y^k - y^\star(x^k) \|^2,
\end{align}
where the last inequality uses \Cref{ass:g} and the optimality of the inner problem $\grd_y g( x^k, y^\star(x^k) ) = 0$. 
% Hence,  we obtain 
% \beq \notag
% \begin{split}
% \EE[ \| y^{k+1} - y^\star(  x^k) \|^2 | {\cal F}_k ] & \leq \bigl [  1 - 2 \beta_k \mu_g + \beta_k^2 L_g^2 (1 + \sigma_g^2) \bigr ] \cdot  \| y^k - y^\star(x^k) \|^2 + \beta_k^2 \sigma_g^2 . 
% \end{split}
% \eeq
% Notice that $L_g$, $\mu_g$, and $\sigma_g$ are all absolute constants. 
As $\beta_k L_g^2 (1 + \sigma_g^2) \leq \mu_g$,
we have 
\begin{align} \label{eq:yfirst}
\begin{split}
\EE[ \| y^{k+1} - y^\star(  x^k) \|^2 | {\cal F}_k ] & \leq ( 1 - \beta_k \mu_g ) \cdot \| y^k - y^\star(x^k) \|^2 + \beta_k^2 \sigma_g^2.
\end{split}
\end{align}
Using the basic inequality $2ab \leq  1/c\cdot a^2 + c \cdot b^2$ for all $c \geq 0$ and $a,b\in \RR$, 
we have 
\beq \label{eq:use_basic_ineq}
\| y^k - y^\star(x^k) \|^2 \leq \big(1 +  1/ c \big) \cdot  \| y^k - y^\star(x^{k-1}) \|^2 + \big( 1 + c \big) \| y^\star(x^{k-1}) - y^\star(x^k) \|^2.
\eeq
Note we have taken the convention that $x^{-1} = x^0$.
Furthermore, we observe that 
\beqq
\| y^\star(x^{k-1}) - y^\star(x^k) \|^2 \leq L_y^2 \| x^k - x^{k-1} \|^2 \leq \alpha_{k-1}^2 L_y^2 \cdot \| h_f^{k-1} \|^2,
\eeqq
where the first inequality follows from \Cref{lem:lips}, and the second inequality follows from the non-expansive property of projection. We have set $h_f^{-1} = 0$ as convention.

Through setting $c = \frac{2(1-\beta_k \mu_g)}{\beta_k \mu_g}$, we have
$\big(1 + 1  / c \big) ( 1 - \beta_k \mu_g) = 1 - \frac{\mu_g}{2} \beta_k$.
%$$
Substituting the above quantity $c$ into \eqref{eq:use_basic_ineq} and combining with \eqref{eq:yfirst} show that
\begin{align} 
    & \EE[ \| y^{k+1} - y^\star(  x^{k} ) \|^2 | {\cal F}_k ]  \notag \\
    & \quad  \leq  \big( 1- \frac{\beta_k \mu_g}{2} \big) \cdot \| y^k - y^{\star} (x^{k-1} ) \|^2 + \beta_k^2 \cdot \sigma_g^2  +\frac{2 - \mu_g \beta_k}{\mu_g \beta_k} \cdot \alpha_{k-1}^2 L_y^2 \cdot  \| h_f^{k-1} \|^2.\label{eq:combine_y_bound}
\end{align}
Taking  the total expectation and using \eqref{eq:hfbd}, we have 
\begin{align*}
\begin{split}
\Delta_y^{k+1} &  \leq \big( 1- \beta_k \mu_g / 2 \big) \cdot \Delta_y^k + \beta_k^2 \sigma_g^2 + \frac{2 - \mu_g \beta_k}{\mu_g \beta_k} \alpha_{k-1}^2 L_y^2 \big[ \tilde\sigma_f^2 + 3 b_{k-1}^2 + 3 L^2 \Delta_y^k \big] ,
\end{split}
\end{align*}
with the convention $\alpha_{-1} = 0$.
Using $\alpha_{k-1} \leq 2 \alpha_k$, $\alpha_k \leq {\rm c}_0 \beta_k^{3/2}$, we have 
\begin{align*}
\begin{split}
\Delta_y^{k+1} & 
% \leq \Big [  1- \beta_k \mu_g / 2 \Big ] \cdot  \Delta_y^k + \beta_k^2 \sigma_g^2  + \frac{4 L_y^2}{ \mu_g} \cdot \frac{\alpha_{k}^2}{\beta_k} \cdot D^2 
\leq \Big [  1- \beta_k \mu_g / 2 + \frac{12 {\rm c}_0^2 L_y^2 L^2}{\mu_g} \beta_k^2 \Big ] \cdot  \Delta_y^k  + \beta_k^2 \sigma_g^2 + \frac{4 {\rm c}_0^2 L_y^2}{\mu_g} \beta_k^2\cdot \big[ \tilde\sigma_f^2 + 3 b_0^2 \big] \\
& \leq \Big [  1- \beta_k \mu_g / 4 \Big ] \cdot  \Delta_y^k  + \beta_k^2 \sigma_g^2 + \frac{4 {\rm c}_0^2 L_y^2}{\mu_g} \beta_k^2\cdot \big[ \tilde\sigma_f^2 + 3 b_0^2 \big] ,
\end{split}
\end{align*}
where the last inequality is due to \eqref{eq:stepsize0}.
Solving the recursion leads to 
\begin{align} \label{eq:y_recursion2}
\begin{split}
\Delta_y^{k+1} & \textstyle \leq G_{0:k}^{(1)} \, \Delta_y^0 + \sum_{j=0}^k \beta_j^2 G_{j+1:k}^{(1)} \Big\{ {\sigma}_g^2 + \frac{4 {\rm c}_0^2 L_y^2}{\mu_g} \big[ \tilde\sigma_f^2 + 3 b_0^2 \big]  \Big\} .
% \\
% & \leq G_{0:k}^{(1)} \, \Delta_y^0 +  \beta_k \frac{8}{\mu_g} \Big\{ \frac{8 {\rm c}_0^2 L_y^2}{\mu_g} \bar{\sigma}_f^2 + \tilde{\sigma}_g^2 \Big\}
% + \frac{ 32 {\rm c}_0^2 L_y^2 L_f^2 (2+\sigma_f^2)}{\mu_g} \sum_{j=1}^k \beta_j^2 G_{j+1:k}^{(1)} \Delta_x^{j-1},
\end{split}
\end{align}
Since $\beta_{k-1} / \beta_k \leq 1 + \beta_k \cdot ( \mu_g / 8)$, 
applying Lemma~\ref{lem:aux2} to $\{ \beta_k \}_{k\geq 0}$ with $a = \mu_g / 4$ and $q = 2$,
we have $\sum_{j=0}^k \beta_j^2 G_{j+1:k}^{(1)}  \leq  \frac{8 \beta_k }{\mu_g}$.
% Finally, we define 
% \beq \notag
% {\rm C}_y^{(1)} :=  \frac{8}{ \mu_g} \Big\{ {\sigma}_g^2 + \frac{4 {\rm c}_0^2 L_y^2}{\mu_g} D^2 \Big\}.
% % , \quad {\rm C}_y^{(2)} := \frac{ 32 {\rm c}_0^2 L_y^2 L_f^2 (2 + \sigma_f^2) }{\mu_g}.
% \eeq
Finally, we can simplify \eqref{eq:y_recursion2} as
\begin{align} \label{eq:yrecur}
\begin{split}
\Delta_y^{k+1} & \leq G_{0:k}^{(1)} \, \Delta_y^0 + {\rm C}_y^{(1)} \beta_k, \quad \text{where} \quad {\rm C}_y^{(1)} :=  \frac{8}{ \mu_g} \Big\{ {\sigma}_g^2 + \frac{4 {\rm c}_0^2 L_y^2}{\mu_g} \big[ \tilde\sigma_f^2 + 3 b_0^2 \big] \Big\}.% + {\rm C}_y^{(2)} \sum_{j=1}^k \beta_j^2 G_{j+1:k}^{(1)} \Delta_x^{j-1}.
\end{split}
\end{align}

\subsection{Proof of \Cref{lemma:refined_dx}} \label{sec:pflem32} \vspace{-.2cm}
Due to the projection property, we get
\begin{align*} \notag
\begin{split}
 \| x^{k+1} - x^\star \|^2 & \leq \| x^k - \alpha_k h_f^k - x^\star \|^2 = \| x^k - x^\star \|^2 - 2 \alpha_k \pscal{ h_f^k }{ x^k - x^\star } + \alpha_k^2 \| h_f^k \|^2.  
\end{split}
\end{align*}
Taking the conditional expectation given ${\cal F}_k'$ gives  
\begin{align} \label{eq:bound_x_iter}
\begin{split}
\EE[ \| x^{k+1} - x^\star \|^2 | {\cal F}_k'] & \leq \| x^k - x^\star \|^2 - 2 \alpha_k \pscal{ \grd \ell(x^k) }{ x^k - x^\star } + \alpha_k^2 \EE[ \| h_f^k \|^2 | {\cal F}_k'] \\
& \quad -2 \alpha_k \pscal{ \Bgrd f(x^k, y^{k+1}) - \grd \ell(x^k) + B_k }{ x^k - x^\star } ,
\end{split}
\end{align}
where the inequality follows from \eqref{eq:property:hg1}. 
The strong convexity implies $\pscal{ \grd \ell(x^k) }{ x^k - x^\star } \geq \mu_\ell \| x^k - x^\star \|^2$,
%, together with the basic inequality 
%\beq \label{eq:a_basic_inequality}
%\big| 2 \alpha_k \pscal{ a }{ b } \big| \leq 
%\alpha_k^{7/6} \| a \|^2 + \alpha_k^{5/6}  \| b \|^2,
%\eeq 
we further bound the r.h.s.~of 
\eqref{eq:bound_x_iter}
via {\small
\begin{align*}
\begin{split}
 & \EE[ \| x^{k+1} - x^\star \|^2 | {\cal F}_k']  \notag \\
 &  \leq  \big(1 - 2 \alpha_k \mu_\ell \big) \| x^k - x^\star \|^2 -2 \alpha_k \pscal{ \Bgrd f(x^k, y^{k+1}) - \grd \ell(x^k) + B_k }{ x^k - x^\star } + \alpha_k^2 \EE[ \| h_f^k \|^2 | {\cal F}_k'] \\
&  \leq \big(1 - \alpha_k \mu_\ell \big) \| x^k - x^\star \|^2 + \frac{ \alpha_k }{ \mu_\ell } \| \Bgrd f(x^k, y^{k+1}) - \grd \ell(x^k) + B_k \|^2  + \alpha_k^2  \EE[ \| h_f^k \|^2 | {\cal F}_k'] \\
& \leq \big(1 - \alpha_k \mu_\ell \big)  \cdot\| x^k - x^\star \|^2 + ( 2 \alpha_k / \mu_\ell ) \cdot \big\{ L^2 \| y^{k+1} - y^\star(x^k) \|^2 + b_k^2 \bigr \}  + \alpha_k^2 \cdot \EE[ \| h_f^k \|^2 | {\cal F}_k'],
\end{split}
\end{align*}}%
where the last  inequality is from Lemma \ref{lem:lips}.
Using \eqref{eq:hfbd} and taking total expectation:  
\begin{align*} \notag
% \label{eq:bound_x_iter2}
\begin{split}
\Delta_x^{k+1} & \leq \big [  1 - \alpha_k \mu_\ell \big ]  \Delta_x^k  + \big [ 2 \alpha_k / \mu_\ell \big ]  L^2   \Delta_y^{k+1} + 2 \alpha_k b_{k} ^2 / \mu_\ell + \alpha_k^2 \big[ \tilde\sigma_f^2 + 3 b_0^2 + 3 L^2 \Delta_y^{k+1} \big] \\
& \leq \big [  1 - \alpha_k \mu_\ell \big ] \cdot  \Delta_x^k  + \big [ 2 \alpha_k / \mu_\ell + 3 \alpha_k^2 \big ] \cdot  L^2 \cdot   \Delta_y^{k+1} + \alpha_k^2 \big[ 2 \tilde{\rm c}_b / \mu_\ell + \tilde\sigma_f^2 + 3 b_0^2 \big],
\end{split}
\end{align*}%
where we have used $b_k^2 \leq \tilde{c}_b \alpha_k$.
% where the second inequality follows from the fact that $\alpha_k \leq 1$, and the last inequality uses \eqref{eq:bias_decay}. 
% Our choice of $\alpha_k$ 
% such that
% \beqq  
% 4 L_f^2 (2+\sigma_f^2) \alpha_k \leq \mu_\ell / 2 ,
% \eeqq
% implies that 
% $1 - \alpha_k \mu_\ell + 4 L_f^2 (2+\sigma_f^2) \alpha_k^2  \leq 1 - \alpha_k \cdot \mu_\ell / 2$.
% Thus, by \eqref{eq:bound_x_iter2},
% \beqq
% \begin{split}
% \Delta_x^{k+1} & \leq \big( 1 - \alpha_k \mu_\ell / 2 \big) \cdot \Delta_x^k + 2 L^2 (3+\sigma_f^2) \cdot \alpha_k  \Delta_y^{k+1} / \mu_\ell + \alpha_k^2  \cdot ( \bar{\sigma}_f^2 + 4 \tilde {\rm c}_b ).
% \end{split}
% \eeqq
Solving the recursion above leads to
\begin{align*} 
\begin{split}
\Delta_x^{k+1} & \leq G_{0:k}^{(2)} \Delta_x^0 +  \sum_{j=0}^k \Big\{ \big[ \frac{2 \tilde{\rm c}_b }{ \mu_\ell } + \tilde\sigma_f^2 + 3 b_0^2 \big] \alpha_j^2 G_{j+1:k}^{(2)} + \big[ \frac{ 2L^2 }{\mu_\ell} + 3 \alpha_0 L^2 \big] \alpha_j G_{j+1:k}^{(2)} \Delta_y^{j+1} \Big\} \\
& \leq G_{0:k}^{(2)} \Delta_x^0 + \frac{2}{\mu_\ell} \big[ \frac{2 \tilde{\rm c}_b }{ \mu_\ell } +  \tilde\sigma_f^2 + 3 b_0^2 \big] \cdot \alpha_k + \Big( \frac{ 2L^2 }{\mu_\ell} + 3 \alpha_0 L^2 \Big) \sum_{j=0}^k \alpha_j  G_{j+1:k}^{(2)} \Delta_y^{j+1} .
\end{split}
\end{align*}
The last inequality follows from applying 
\Cref{lem:aux2} with $q = 2$ and $a = \mu_{\ell}$. 
% Again, we obtain an \emph{intermediate} bound for $\Delta_x^{k}$ involving a coupling term with $\Delta_y^{k}$. 
% That said, from \eqref{eq:yrecur}, we have a closed form upper bound for the latter. This allows us to derive a bound for $\Delta_x^k$ as in the desired lemma. Therefore, 
% We conclude the proof of this lemma. 

\subsection{Bounding $\Delta_x^k$ by coupling with $\Delta_y^k$}\label{sec:pfthm31} \vspace{-.2cm}
Using \eqref{eq:yrecur}, we observe that
\begin{align} \label{eq:plug_in_bounds}
\begin{split}
\textstyle \sum_{j=0}^k \alpha_j  G_{j+1:k}^{(2)} \Delta_y^{j+1} & 
\textstyle \leq \sum_{j=0}^k \alpha_j G_{j+1:k}^{(2)} \Big\{ G_{0:j}^{(1)} \Delta_y^0 + {\rm C}_y^{(1)} \beta_j \Big\} .
\end{split}
\end{align}
We bound each term on the right-hand side of \eqref{eq:plug_in_bounds}.
For the first term, as $\mu_\ell \alpha_i \leq \mu_g \beta_i / 8$ [cf.~\eqref{eq:stepsize1}], applying \Cref{lem:aux3} with $a = \mu_\ell$, $b = \mu_g / 4$, $\gamma_i = \alpha_i$, $\rho_i = \beta_i$ gives
\beq \label{eq:term31} \textstyle
\sum_{j=0}^k \alpha_j  G_{j+1:k}^{(2)} G_{0:j}^{(1)} \Delta_y^0 \leq \frac{1}{\mu_\ell} G_{0:k}^{(2)} \Delta_y^0.
\eeq
Recall that $\beta_j \leq {\rm c}_1 \cdot \alpha_j^{2/3}$. Applying Lemma~\ref{lem:aux2} with $q = 5/3$, $a = \mu_{\ell}$ yields
\beq \label{eq:term32} \textstyle
\sum_{j=0}^k \alpha_j \beta_j G_{j+1:k}^{(2)} \leq {\rm c}_1 \sum_{j=0}^k \alpha_j^{5/3} G_{j+1:k}^{(2)} \leq {\rm c}_1 \frac{2}{\mu_\ell} \, \alpha_k^{2/3},
\eeq
We obtain a bound on the optimality gap as {\footnotesize
\begin{align*} \notag
\begin{split}
\Delta_x^{k+1} & \leq G_{0:k}^{(2)} \Big\{ \Delta_x^0 + \Big[ \frac{2 L^2}{\mu_\ell^2} + \frac{3 \alpha_0 L^2}{\mu_\ell} \Big] \Delta_y^0 \Big\} + \frac{2}{\mu_\ell} \Big[ \frac{2 \tilde{\rm c}_b} { \mu_\ell } + \tilde{\sigma}_f^2 + 3 b_0^2 \Big] \alpha_k \\
& + \frac{2 {\rm c}_1}{ \mu_\ell } \Big[ \frac{2L^2}{\mu_\ell} + 3 \alpha_0 L^2 \Big] {\rm C}_y^{(1)} \alpha_k^{\frac{2}{3}}.
\end{split}
\end{align*}}To simplify the notation, we define the constants {\small
\begin{align*}\notag
\begin{split}
& {\rm C}_x^{(0)} = \Delta_x^0 + \Big[ \frac{2 L^2}{\mu_\ell^2} + \frac{3 \alpha_0 L^2}{\mu_\ell} \Big] \Delta_y^0, {\rm C}_x^{(1)} = \frac{2}{\mu_\ell} \Big[ \frac{2 \tilde{\rm c}_b} { \mu_\ell } + \tilde{\sigma}_f^2 + 3 b_0^2 \Big] + \frac{2 {\rm c}_1}{ \mu_\ell } \Big[ \frac{2L^2}{\mu_\ell} + 3 \alpha_0 L^2 \Big] {\rm C}_y^{(1)}
\end{split}
\end{align*}}Then, as long as $\alpha_k < 1/\mu_\ell$ and we use the step size parameters in \eqref{eq:kalpha}, we have
\beqq \notag
\Delta_x^{k+1} \leq G_{0:k}^{(2)} {\rm C}_x^{(0)} + {\rm C}_x^{(1)} \alpha_k^{2/3} =  {\cal O} \Big( \Big[ \frac{L^2}{\mu_\ell^2 \mu_g^2} + \frac{ L^2 L_y^2 }{ \mu_\ell^4 } \Big] \frac{\tilde\sigma_f^2}{k^{2/3}} + \frac{L^2}{ \mu_\ell^2 \mu_g } \frac{\sigma_g^2}{k^{2/3}}  \Big),
\eeqq
and we recall that $\tilde{\sigma}_f^2 = \sigma_f^2 + 3 \sup_{x \in X} \| \grd \ell(x) \|^2$.
%where the last asymptotic relation holds . The proof is concluded.
% Thus $\Delta_x^{k+1} = {\cal O}(1/k^{2/3})$.

% Finally, to obtain the convergence  rate for the tracking error, we recall from \eqref{eq:yrecur} that
% \beq  \label{eq:yrecur0}
% \begin{split}
% \Delta_y^{k+1} & \leq G_{0:k}^{(1)} \, \Delta_y^0 + {\rm C}_y^{(1)} \beta_k + {\rm C}_y^{(2)} \sum_{j=0}^k \beta_j^2 G_{j+1:k}^{(1)} \Delta_x^{j-1}.
% \end{split}
% \eeq
% By \eqref{eq:define_operators} and direct  computation, 
% we have 
% \beq \label{eq:bound_stepsize}
%  G_{0:k}^{(1)} = \prod_{ i =0}^k ( 1 - \beta_i \mu_g / 4) \leq \prod_{ i =0}^k \exp( - \beta_i \mu_g / 4 ) = \exp \biggl ( - \mu_g / 4  \cdot \sum_{i=0}^k \beta_i \bigg )  = {\cal O} (\beta_k),
% \eeq 
% where the last equality follows from the fact that $\sum_{i=0}^k \beta_i  = \Theta(k^{1/3})$ and thus $G_{0:k}^{(1)}$ decays faster than sublinear. 
% In addition, by \eqref{eq:opt_gap_final}, 
% there exists a constant ${\rm c}'$ such that  
% \beqq
%   {\rm C}_y^{(2)} \sum_{j=0}^k \beta_j^2 G_{j+1:k}^{(1)} \Delta_x^{j-1} \leq {\rm c}' \cdot  {\rm C}_y^{(2)} \sum_{j=0}^k \beta_j^2 \alpha_j^{2/3} G_{j+1:k}^{(1)} \leq 
% {\rm c}' \alpha_0^{2/3} {\rm C}_y^{(2)} \sum_{j=0}^k \beta_j^2 G_{j+1:k}^{(1)} \leq \frac{8 {\rm c}' \alpha_0^{2/3} {\rm C}_y^{(2)}}{\mu_g} \beta_k, 
% \eeqq
% where the last inequality is due to \Cref{lem:aux2}. Combining this inequality and   \eqref{eq:yrecur0} and \eqref{eq:bound_stepsize}, we conclude that 
% \beqq 
%   \Delta_y^{k+1} = {\cal O}(\beta_k) = {\cal O}( k^{-2/3} ).
% \eeqq
% Therefore, we conclude the proof of Theorem \ref{th:sc:uc}. 

		\section{Omitted Proofs of \Cref{th:wc:c} and \Cref{cor:c:c}}\label{app:proof:weakly}
		% For the results discussed in this section, we assume fixed step sizes with $\alpha_k \equiv \alpha$ and $\beta_k \equiv \beta$.   

\subsection{Proof of \Cref{lem:recur_lem}} \label{sec:pfrecur_n}
We observe that summing the first and the second inequalities in \eqref{eq:recurnew} from $k=0$ to $k=K-1$ gives:
\begin{align} 
&\textstyle {\rm c}_0 \sum_{k=1}^K \Theta^k \leq \Omega^0 + {\rm c}_1  \sum_{k=1}^K \Upsilon^k + {\rm c}_2 \cdot K. \label{eq:recur_n_theta}\\
&\textstyle {\rm d}_0 \sum_{k=1}^K \Upsilon^k \leq \Upsilon^1 + {\rm d}_1  \sum_{k=1}^K \Theta^k + {\rm d}_2 \cdot K.   \label{eq:recur_n_upsilon}
\end{align}
Substituting \eqref{eq:recur_n_theta} into \eqref{eq:recur_n_upsilon} gives
\beqq 
\textstyle {\rm d}_0 \sum_{k=1}^K \Upsilon^k \leq \Upsilon^1 + {\rm d}_2 \cdot K + \frac{ {\rm d}_1 }{ {\rm c}_0 } \Big[ \Omega^0 + {\rm c}_1 \sum_{k=1}^K \Upsilon^k + {\rm c}_2 \cdot K \Big] . 
\eeqq
Therefore, if ${\rm d}_0 - {\rm d}_1 \frac{ {\rm c}_1 }{ {\rm c}_0 } > 0$, a simple computation yields the second inequality in \eqref{eq:recur_concl}. 
Similarly, we substitute \eqref{eq:recur_n_upsilon} into \eqref{eq:recur_n_theta} to yield
\beqq 
\textstyle {\rm c}_0 \sum_{k=1}^K \Theta^k \leq \Omega^0 + {\rm c}_2 \cdot K + \frac{ {\rm c}_1 }{ {\rm d}_0 } \Big[ \Upsilon^1 + {\rm d}_1 \sum_{k=1}^K \Theta^k + {\rm d}_2 \cdot K \Big] . 
\eeqq
Under ${\rm c}_0 - {\rm c}_1 \frac{ {\rm d}_1 }{ {\rm d}_0 } > 0$, simple computation yields the first inequality in \eqref{eq:recur_concl}.  

\subsection{Proof of Lemma \ref{lemma:x:descent_n}} \label{sec:pfxdescent}
Recall that we defined $\mbox{\rm OPT}^{k} := \EE [ \ell(x^{k}) - \ell(x^\star)] $ for each $k\geq 0$. 
To begin with, we  have the following descent estimate 
\begin{align}\label{eq:descent:l}
\ell(x^{k+1})\le \ell(x^k) + \langle \nabla \ell(x^k), x^{k+1}-x^k\rangle + (L_f/2) \|x^{k+1}-x^k\|^2.
\end{align}
The optimality condition of step \eqref{eq:x:update} leads to the following bound{\footnotesize
\begin{align}
\langle \nabla \ell(x^k), x^{k+1}-x^k\rangle
% & = \langle \nabla \ell(x^k) -\Bgrd f(x^k,y^{k+1}) - B_k, x^{k+1}-x^k\rangle + \langle B_k + \Bgrd f(x^k,y^{k+1}) - h^k_f, x^{k+1}-x^k\rangle  +  \langle h^k_f , x^{k+1}-x^k\rangle  \nonumber\\
 & \le \langle \nabla \ell(x^k) - \Bgrd f(x^k,y^{k+1}) - B_k, x^{k+1}-x^k\rangle \nonumber \\
 & \quad + \langle B_k + \Bgrd f(x^k,y^{k+1}) - h^k_f, x^{k+1}-x^k\rangle   -\frac{1}{\alpha}\| x^{k+1}-x^k\|^2  \nonumber,
\end{align}}where we obtained the inequality by adding and subtracting $B_k + \Bgrd f(x^k,y^{k+1}) - h^k_f$.
Then, taking the conditional expectation on $\mathcal{F}'_k$,  for any $c,d > 0$, we obtain 
\begin{align*}
& \mathbb{E}[\langle \nabla \ell(x^k), x^{k+1}-x^k\rangle | \mathcal{F}'_k]  \\
& \leq \EE \bigl [ \|\nabla \ell(x^k) -\Bgrd f(x^k,y^{k+1})  - B_k \| \cdot \| x^{k+1}  - x^k \| \big| \mathcal{F}'_k  \bigr ] \notag \\
&\quad + \EE \bigl [  \| B_k + \Bgrd f(x^k,y^{k+1})  - h_f^k \| \| x^{k+1} - x^k \|   \big| \mathcal{F}'_k \bigr ] - \frac{1}{\alpha}\mathbb{E}[\|x^{k+1}-x^k \|^2| \mathcal{F}'_k] \\
& \le \frac{1}{2c}\mathbb{E}[\|\nabla \ell(x^k) -\Bgrd f(x^k,y^{k+1})  - B_k \|^2| \mathcal{F}'_k] + \frac{c}{2}\mathbb{E}[\|x^{k+1}-x^k\|^2|\mathcal{F}'_k] \nonumber\\
& \quad + \frac{\sigma^2_f}{2 d} + \frac{d}{2}\mathbb{E}[\|x^{k+1}-x^k\|^2|\mathcal{F}'_k]  - \frac{1}{\alpha}\mathbb{E}[\|x^{k+1}-x^k \|^2| \mathcal{F}'_k]   ,
\end{align*}
where the second  inequality follows from the Young's inequality and \Cref{ass:stoc}.
Simplifying  the terms above leads to
\begin{align*}
& \mathbb{E}[\langle \nabla \ell(x^k), x^{k+1}-x^k\rangle | \mathcal{F}'_k]   \\
& \le  \frac{1}{2c}\mathbb{E}[\|\nabla \ell(x^k) -\Bgrd f(x^k,y^{k+1})  - B_k \|^2| \mathcal{F}'_k]   + \frac{\sigma^2_f}{2d} +\Bigl ( \frac{c+d} {2} - \frac{1}{ \alpha} \Bigr  )  \cdot   \mathbb{E}[\|x^{k+1}-x^k\|^2|\mathcal{F}'_k].
\end{align*}
Setting $d=c=\frac{1}{2\alpha}$, plugging the above to \eqref{eq:descent:l}, and taking the full expectation:
\begin{align} 
\mbox{OPT}^{k+1}& \le \mbox{OPT}^k - \left(\frac{1}{2\alpha} - \frac{L_f}{2}\right) \cdot  \mathbb{E}[\|x^{k+1}-x^k\|^2] + \alpha \Delta^{k+1} + \alpha {\sigma^2_f}, \label{eq:bound_opt1}
\end{align}
where we have denoted $\Delta^{k+1}$ as follows
\begin{align}
\Delta^{k+1} & := \EE[\|\Bgrd f (x^k;y^{k+1}) - \nabla \ell(x^k) - B_k \|^2] \stackrel{\eqref{eq:lip:f:bar}}  \leq 2 L^2 \EE[\|y^{k+1}-y^\star(x^k)\|^2] + 2 b_k^2, \notag
\end{align}
where the last inequality follows from Lemma \ref{lem:lips} and \eqref{eq:property:hg1} in \Cref{ass:stoc}.  
Next, following from the standard SGD analysis [cf.~\eqref{eq:yfirst}] and using $\beta \leq \mu_g / ( L_g^2(1+\sigma_g^2) )$, we have  
\begin{align}
&\mathbb{E}[\|y^{k+1}- y^{\star}(x^k)\|^2|{\cal F}_k] \le (1- \mu_g \beta)\mathbb{E}[\|y^{k}- y^{\star}(x^k)\|^2|{\cal F}_k] + \beta^2 \sigma_g^2\notag\\
& \le (1+c) (1- \mu_g \beta) \mathbb{E}[\|y^{k}- y^{\star}(x^{k-1})\|^2|{\cal F}_k] + (1+1/c) \mathbb{E}[\|y^{\star}(x^k)- y^{\star}(x^{k-1})\|^2|{\cal F}_k]+ \beta^2 \sigma_g^2\nonumber\\
& \le \big (1- \mu_g  \beta / 2 \bigr )\mathbb{E}[\|y^{k}- y^{\star}(x^{k-1})\|^2|{\cal F}_k] + \Big (\frac{2}{\mu_g  \beta}-1\Big ) \cdot \mathbb{E}[\|y^{\star}(x^k)- y^{\star}(x^{k-1})\|^2|{\cal F}_k]+ \beta^2 \sigma_g^2, \notag\\
& \le \big (1- \mu_g  \beta / 2 \bigr )\mathbb{E}[\|y^{k}- y^{\star}(x^{k-1})\|^2|{\cal F}_k] + \Big (\frac{2}{\mu_g  \beta}-1\Big ) {L_y^2} \cdot \mathbb{E}[\|x^k- x^{k-1}\|^2|{\cal F}_k]+ \beta^2 \sigma_g^2, \notag
\end{align}
where the last inequality is due to the Lipschitz continuity property \eqref{eq:lip:f:bar} and $\mu_g \beta<1$. Furthermore, we have picked $c = \mu_g \beta \cdot [ 2(1-\mu_g \beta) ]^{-1} $, so that 
\beqq \notag
(1+c)(1-\mu_g  \beta) = 1-\mu_g  \beta /2 , \qquad  1 / c  + 1= 2 / (\mu_g \beta) - 1.
\eeqq
Taking a full expectation on both sides leads to the desired result.

\subsection{Proof of \Cref{lem:moreau}} \label{sec:pfmoreau}
For simplicity, we let  $\hx^{k+1}$ and $\hx$  denote $\hx(x^{k+1})$ and   $\hx(x)$, respectively. 
For any $x \in X$, letting $x_1 = \hat x$ and $x_2 = x$ in 
 \eqref{eq:weakly:convex}, we get
\beqq
\ell(\hx) \ge \ell(x) + \langle \nabla \ell(x), \hx-x \rangle +\frac{\mu_\ell}{2}\|\hx-x\|^2.  
\eeqq
Moreover, by the definition of $\hat x$, for any $x \in X$, we have 
\begin{align}
& \ell(x) + \frac{\rho}{2}\|x-x\|^2- \Big  [ \ell(\hx) +\frac{\rho}{2}\|\hx-x\|^2\Big ]  =\ell(x) -\Big  [ \ell(\hx) +\frac{\rho}{2}\|\hx-x\|^2\Big ] \ge 0 .
& \end{align}
Adding the two inequalities  above, we obtain
\begin{align}\label{eq:descent:M}
-\frac{\mu_\ell+\rho}{2} \cdot \|\hx-x\|^2 \ge  \langle \nabla \ell(x), \hx-x \rangle.
\end{align}
Note that we choose  $\rho$ such  that $
\rho + \mu_\ell > 0.$ 
To proceed, 
combining the definitions of the Moreau envelop and $\hat x $ in \eqref{eq:moreaum}, for $x ^{k+1}$, we have  %{\red[double checked the proof, actually needs one additional term.]}
\begin{align}
\Phi_{1/\rho}(x^{k+1}) & \stackrel{\eqref{eq:moreaum}}= \ell(\hx^{k+1}) + \frac{\rho}{2} \cdot \|x^{k+1}-\hx^{k+1}\|^2 \le \ell(\hx^k)+\frac{\rho}{2}  \cdot  \|x^{k+1}-\hx^k\|^2\nonumber\\
& \le \ell(\hx^k)+\frac{\rho}{2}\cdot \|x^{k}-\hx^k\|^2+ \frac{\rho}{2}  \cdot  \|x^{k+1}-x^k\|^2+ \rho\alpha  \cdot   \langle \hx^k-x^k, h^k_f\rangle \nonumber\\
& \quad\quad  + \alpha \rho \cdot \langle h^k_f, x^k - x^{k+1}\rangle + \rho \|x^{k+1}-x^k\|^2  \nonumber\\
& \stackrel{\eqref{eq:moreaum}} = \Phi_{1/\rho}(x^{k}) + {\frac{5 \rho}{2}}\cdot \|x^{k+1}-x^k\|^2+ \rho\alpha\langle \hx^k-x^k, h^k_f\rangle  +  {\alpha^2 \rho \|h^k_f\|^2} \label{eq:Phi:descent},
\end{align}
where the first
equality and the first inequality follow from the optimality of $\hat x^{k+1} = \hat x (x^{k+1})$,
and the second term is from the optimality condition in \eqref{eq:x:update}. 
For any $x^{\star}$ that is a global optimal solution for the original problem $\min_{x\in  X}\ell(x)$, we must have 
\beq \notag
\Phi_{1/\rho}(x^\star) = \min_{x \in X} \Big\{  \ell(x) + \frac{\rho}{2}\|x-x^{\star}\|^2 \Big\}  = \ell(x^{\star}),
\eeq
where the last equality holds because  
\begin{align}
	\Phi_{1/\rho}(x^\star)  = \min_{x \in X} \bigl \{  \ell(x) + \frac{\rho}{2}\|x-x^{\star}\|^2\bigr \}  &\leq  \ell(x^\star ) + \frac{\rho}{2}\|x ^\star -x^{\star}\|^2 = \ell(x^\star ),    \\ 
\Phi_{1/\rho}(z)  = \min_{x\in X} \big \{  \ell(x) + \frac{\rho}{2}\|x-z\|^2 \bigr \} &  \ge \min_{x\in X} \ell(x)  =\ell(x^\star),\; \forall~z\in X.
\end{align}
Taking expectation of $\langle \hx^k-x^k, h^k_f\rangle$ while conditioning on $\mathcal{F}_k'$, we have:
\begin{align} 
&\mathbb{E}[\langle \hx^k-x^k, h^k_f\rangle|\mathcal{F}_k'] \nonumber\\
& = \mathbb{E}[\langle \hx^k - x^k, h^k_f-\Bgrd f(x^k,y^{k+1}) + \Bgrd f(x^k,y^{k+1}) - {\nabla} \ell(x^k) +  {\nabla} \ell(x^k)\rangle|\mathcal{F}_k'] \nonumber\\
& = \langle \hx^k-x^k, B_k\rangle + \mathbb{E}[\langle \hx^k-x^k, \Bgrd f(x^k,y^{k+1}) - {\nabla} \ell(x^k)\rangle + \langle \hx^k-x^k,  {\nabla} \ell(x^k)\rangle|\mathcal{F}_k'], \label {eq:some_bound11}
\end{align}
where the second equality follows from \eqref{eq:property:hg1} in \Cref{ass:stoc}.
By Young's inequality, for any $c > 0$, we have\vspace{-.1cm}
\beqq \label{eq:some_bound111}
\langle \hx^k-x^k, B_k\rangle   \leq  \frac{c}{4}\|\hx^k-x^k\|^2 + \frac{1}{c} b_k^2 ,  \vspace{-0.4cm}
\eeqq
\beqq \notag
\mathbb{E}[\langle \hx^k-x^k, \Bgrd f(x^k,y^{k+1}) - {\nabla} \ell(x^k)\rangle|\mathcal{F}_k'] \leq \frac{1}{c} \|\Bgrd f(x^k,y^{k+1}) - {\nabla} \ell(x^k)\|^2 + \frac{c}{4}\|\hx^k-x^k\|^2 , 
\eeqq
where we also use \eqref{eq:property:hg1} in deriving \eqref{eq:some_bound111}. 
Combining \eqref{eq:descent:M}, \eqref{eq:some_bound11}, \eqref{eq:some_bound111}, and setting $c = ( \rho + \mu_{\ell}) / 2$,  we obtain that 
\begin{align}
&\mathbb{E}[\langle \hx^k-x^k, h^k_f\rangle|\mathcal{F}_k'] \\
& \le \frac{c}{2}\|\hx^k-x^k\|^2 + \frac{1}{c} b_k^2 +\frac{1}{c}\mathbb{E}[\|\Bgrd f(x^k,y^{k+1}) - {\nabla} \ell(x^k)\|^2] - \frac{\rho+\mu_\ell}{2}\|\hx^k-x^k\|^2\nonumber\\
&  = \frac{2}{\rho+\mu_\ell} \cdot \mathbb{E}[\|\Bgrd f(x^k,y^{k+1}) - {\nabla} \ell(x^k)\|^2]  - \frac{(\rho+\mu_\ell)}{4} \cdot \|\hx^k-x^k\|^2 +  \frac{2}{\rho +\mu_\ell} \cdot  b_k^2   \nonumber\\
&   \le  \frac{2L^2}{\rho+\mu_\ell} \cdot \mathbb{E}[\|y^{k+1}- y^\star(x^k)\|^2]  - \frac{(\rho+\mu_\ell)}{4} \cdot \|\hx^k-x^k\|^2 +  \frac{2}{\rho +\mu_\ell} \cdot b_k^2, \nonumber
\end{align}
where the last step  follows from the first inequality of  Lemma \ref{lem:lips}.
Plugging the above into \eqref{eq:Phi:descent}, and taking a full expectation, we obtain
\begin{align}
& \EE [\Phi_{1/\rho}(x^{k+1})] - \EE[\Phi_{1/\rho}(x^{k})]\nonumber\\
&  \le \frac{5\rho}{2} \EE [\|x^{k+1}-x^k\|^2]+ \frac{2\rho\alpha L^2}{\rho+\mu_\ell} \Delta^{k+1}_y - \frac{(\rho+ \mu_\ell)\rho\alpha}{4} \EE [\|\hx^k-x^k\|^2] +\frac{2\rho \alpha b_k^2}{\rho+\mu_\ell} + \alpha^2 \rho \EE[ \|h^k_f\|^2] \nonumber \\
&  \le \frac{5\rho}{2} \EE [\|x^{k+1}-x^k\|^2]+ \frac{2\rho\alpha L^2}{\rho+\mu_\ell} \Delta^{k+1}_y - \frac{(\rho+ \mu_\ell)\rho\alpha}{4} \EE [\|\hx^k-x^k\|^2] + \frac{2\rho \alpha b_k^2}{\rho+\mu_\ell}  \nonumber\\
&  \quad + \alpha^2 \rho ( \tilde{\sigma}_f^2 + 3 b_k^2 + 3 L^2 \Delta_y^{k+1} ) \nonumber\\
&  \leq \frac{5\rho}{2} \EE [\|x^{k+1}-x^k\|^2] + \Big[ \frac{2\alpha \rho L^2}{\rho+\mu_\ell} + 3 \alpha^2 \rho L^2 \Big] \Delta^{k+1}_y - \frac{(\rho+ \mu_\ell)\rho\alpha}{4} \EE [\|\hx^k-x^k\|^2] \nonumber\\
& \quad + \Big[ \frac{2\rho}{\rho+\mu_\ell} + \rho ( \tilde{\sigma}_f^2 + 3 b_0^2 ) \Big] \alpha^2 , \nonumber
\end{align}
where  %we recall that  the tracking error $\Delta_y^{k+1} $  is defined as $\mathbb{E}[\|y^{k+1}- y^\star(x^k)\|^2]$ and 
the last inequality is due to the assumption $b_k^2 \leq \alpha$. 

{\blue \subsection{Proof of \Cref{cor:c:c}} \label{sec:pfcor}
Our proof departs from that of \Cref{th:wc:c} through manipulating the descent estimate \eqref{eq:descent:l} in an alternative way. The key is to observe the following three-point inequality \cite{beck2017first}:
\beq \label{eq:threepoint}
\langle h_f^k , x^{k+1} - x^\star \rangle \leq \frac{1}{2 \alpha} \Big\{ \| x^\star - x^k \|^2 - \| x^\star - x^{k+1} \|^2 - \| x^k - x^{k+1} \|^2 \Big\},
\eeq 
where $x^\star$ is an optimal solution to \eqref{eq:bilevel}.
Observe that
\beqq
\langle \grd \ell(x^k) , x^{k+1} - x^k \rangle = \langle \grd \ell(x^k) - h_f^k , x^{k+1} -x^\star \rangle + \langle h_f^k , x^{k+1} - x^\star \rangle + \langle \grd \ell(x^k) , x^\star - x^k \rangle.
\eeqq
Notice that due to the convexity of $\ell(x)$, we have $\langle \grd \ell(x^k) , x^\star - x^k \rangle \leq -{\rm OPT}^k$. Furthermore,
\begin{align*}
\begin{split}
& \langle \grd \ell(x^k) - h_f^k , x^{k+1} -x^\star \rangle\\
& = \langle \grd \ell(x^k) - h_f^k + B_k + \Bgrd f(x^k, y^{k+1} ) - B_k - \Bgrd f(x^k, y^{k+1} )  , x^{k+1} -x^\star \rangle \\
& \leq D_x \big\{ b_k + L \| y^{k+1} - y^\star(x^k) \| \big\} + \langle B_k + \Bgrd f(x^k, y^{k+1} ) - h_f^k , x^{k+1} - x^k + x^k - x^\star \rangle. 
\end{split}
\end{align*}
We notice that $\EE[ \langle B_k + \Bgrd f(x^k, y^{k+1} ) - h_f^k , x^k - x^\star \rangle | {\cal F}_k'] = 0$. Thus, taking the total expectation on both sides and applying Young's inequality on the last inner product lead to 
\begin{align*}
\begin{split}
& \EE[ \langle \grd \ell(x^k) - h_f^k , x^{k+1} -x^\star ] \leq D_x \big\{ b_k + L \EE[ \| y^{k+1} - y^\star(x^k) \| ] \big\} + \frac{\alpha}{2} \sigma_f^2 + \frac{1}{2\alpha} \EE[ \| x^{k+1} - x^k \|^2 ] 
\end{split}
\end{align*}
Substituting the above observations into \eqref{eq:descent:l} and using the three-point inequality \eqref{eq:threepoint} give
\begin{align*}
\begin{split}
\EE[ \ell(x^{k+1}) - \ell(x^k) ] & \leq D_x \big\{ b_k + L \EE[ \| y^{k+1} - y^\star(x^k) \| ] \big\} + \frac{1}{2\alpha} \Big\{ \| x^\star - x^k \|^2 - \| x^\star - x^{k+1} \|^2 \Big\} \\
& \quad + \frac{\alpha}{2} \sigma_f^2 - {\rm OPT}^k + \frac{L_f}{2} \EE[ \| x^{k+1} - x^k \|^2 ]. 
\end{split}
\end{align*}
Summing up both sides from $k=0$ to $k=\Kmax-1$ and dividing by $\Kmax$ gives
\begin{align*}
\begin{split}
\frac{1}{\Kmax}\sum_{k=1}^{\Kmax} {\rm OPT}^k & \leq D_x b_0 + \frac{\alpha \sigma_f^2}{2} + \frac{\|x^\star - x^0 \|^2 }{2\alpha \Kmax} + \frac{D_x L}{K}\sum_{k=1}^{\Kmax}\EE[\| y^{k} - y^\star(x^{k-1}) \|] \\
& \quad + \frac{L_f}{2\Kmax} \sum_{k=1}^{\Kmax} \EE[ \| x^k - x^{k-1} \|^2 ].
\end{split}
\end{align*}
Applying Cauchy-Schwartz inequality and \Cref{lem:recur_lem}, \ref{lemma:x:descent_n} with $\alpha = {\cal O}( \Kmax^{-3/4} )$, $\beta = {\cal O}(\Kmax^{-1/2})$ as in \eqref{eq:stepsize_cvx} show that $\frac{1}{\Kmax}\sum_{k=1}^{\Kmax}\EE[\| y^{k} - y^\star(x^{k-1}) \|] \leq \sqrt{\frac{1}{\Kmax}\sum_{k=1}^{\Kmax} \Delta_y^k} = {\cal O}(\Kmax^{-1/4})$; cf.~\eqref{eq:main_ref_ncvx}. The proof is concluded.}

		\vspace{-0.2cm}
\ifonlineapp
\section{Justifications to \Cref{ass:bdd_reward}--\Cref{ass:concentrability}} \label{app:just}
In the following, we list these assumptions and provide explanations for when the assumptions are satisfied.
	 
\begin{itemize} 
\item (\Cref{ass:bdd_reward})   The reward function is uniformly bounded by a constant $\overline r$. That is, $|r(s,a)| \leq \overline{r}$ for all $(s,a) \in S \times A$. 
% 			The action space is finite, i.e., $|A| < \infty$.  
 
 This assumption merely states that the reward functions are uniformly bounded. This is a standard assumption used in MDP and reinforcement learning community. See, e.g., Chapter 2.2 of \cite{szepesvari2010algorithms}. 
 In practice, the reward functions are usually hand-crafted by the problem solver. They often encode the scores earned by the agent in each step,  or whether some desired goal is reached.   
 
\item (\Cref{ass:linear_assumption})  The feature map $\phi \colon S \times A \rightarrow \RR^d$ satisfies $\| \phi(s,a) \|_2 \leq 1$ for all $(s,a) \in S \times A$. 
			The action-value function associated with each policy is a linear function of $\phi$. 
That is, for  any policy $\pi \in X$,  there exists $\theta^{\star} (\pi)  \in \RR^d$ such that $Q^{\pi} (\cdot, \cdot)  = \phi (\cdot, \cdot) ^\top \theta^\star (\pi)  = Q_{ \theta^\star( \pi ) } (\cdot, \cdot) $.
		
This assumption assumes that 	the action-value function $	Q^{\pi}$ is a linear function in a known feature mapping $\phi$ and $\phi$ is bounded. Such an assumption is standard in the literature on  reinforcement learning with linear function approximation. See, e.g., Chapter 3.2 of \cite{szepesvari2010algorithms}. In this line of research, it is oftentime postulated that $V^{\pi}(\cdot )$ or $Q^{\pi} (\cdot, \cdot) $ are linear functions of a known feature mapping.

 As for the feature mapping $\phi$, it is usually constructed based on domain knowledge. Some of the common examples include polynomial functions, on $[0,1]$ radial basis function, and random features, which are all bounded. Here we assume that $\sup_{(s,a) \in S\times A} \|\phi(s, a )\|_2  $ is bounded by  one for simplicity, which can be replaced by any fixed parameter. 

Moreover, a concrete mathematical model that satisfies such a model is known as the linear MDP (see \cite{jin2020provably}), which assumes that both the reward function and the Markov transition kernel are linear in the given feature mapping $\phi$. 
Specifically, it is assumed that there exist $\lambda \in \RR^{d}$ and $\mu \colon S \rightarrow \RR^d$ such that 
\begin{align}\label{eq:linear_mdp}
r(s,a) = \phi(s,a)^\top \lambda, \qquad P(s' \,|\, s,a) = \phi(s,a)^\top \mu(s') \qquad  \forall (s, a, s') \in S \times A\times S. 
\end{align}
Such a model includes the finite tabular MDP as a special case with $\phi (s, a) $ being the canonical vector ${\bm e}_{s,a} $  in $\RR^{S\times A} $.
Under  the linear MDP assumption, for any policy $\pi$, the value functions $Q^{\pi}$ and $V^{\pi}$  exist and satisfy 
\begin{align*}
V^{\pi}(s)  & = \sum_{a \in A} \pi(a \, | \, s) Q^{\pi}(s,a) \\
 Q^{\pi}(s,a) & = r(s,a) + \gamma \cdot \sum_{s' \in S } P(s' \,|\, s,a) \cdot V^{\pi}(s') = \phi(s,a)^\top  \biggl (   \underbrace{\lambda + \sum_{\in S } \mu(s') \cdot V^{\pi}(s') } _{\theta^\star (\pi)  } \bigg) . 
\end{align*}
Thus \eqref{eq:linear_mdp} serves as a sufficient condition for the assumption.

\item (\Cref{ass:stationary}) For each policy $\pi \in X$, the induced Markov chain $P^{\pi}$ admits a  unique stationary distribution $\mu^{\pi}$ for all $\pi \in X$. Moreover, there exists $\mu_\phi > 0$ such that 
\[ 
\EE_{s \sim \mu^{\pi}, a \sim \pi(\cdot|s) }  [\phi(s,a)  \phi^\top(s,a)] \succeq \mu_{\phi} ^2 \cdot I_d,~\forall~\pi \in X.
\]
 
The assumption that the Markov chain $P^{\pi}$ induced by any policy $\pi$ has a unique stationary distribution $\mu^{\pi}$ is a common assumption made in the literature on policy gradient. A sufficient condition ensures this  is  that all deterministic (stationary) policies visit all states eventually with probability one, i.e., the MDP is \emph{unichain} (see Section 4.2.4 of \cite{szepesvari2010algorithms}).  

Furthermore, for asymptotic convergence analysis, classical RL literature often assumes that 
$\EE_{s \sim \mu^{\pi}, a \sim \pi(\cdot|s) }  [\phi(s,a)  \phi(s,a) ^\top ]$ is invertible (see Section 4.4.2 of \cite{szepesvari2010algorithms}; page 70). Here we additionally assumes that such a matrix is well-conditioned in the sense the smallest eigenvalue is lower bounded for nonasymptotic analysis. Such an assumption is also required for establishing statistical rates in linear regression. 

In the tabular setting, a sufficient condition that justifying such an assumption is that the transition model is sufficient stochastic such that every policy induces $\pi$ induces a stationary distribution $\mu^{\pi} $ over $S$ such that the mass of $\mu^{\pi} $ on each state $s$ is lower bounded by $\mu_{\phi} > 0$. 
		 
\item  (\Cref{ass:concentrability}) For any $(s,a) \in S \times A$ and any $\pi \in X$, let $ \varrho(s,a, \pi) $ be a probability measure over $S$,  defined by 
\begin{align} \textstyle
[\varrho(s,a, \pi) ] (s' ) = (1 - \gamma)^{-1} \sum_{t\geq 0} \gamma ^t \cdot \PP(s_t  = s'), \qquad \forall s' \in S.  \label{eq:define_sa_visitation}  
\end{align} 
That is, $\varrho(s,a, \pi) $ is the visitation measure induced by the Markov chain starting from  $(s_0, a_0 ) = (s,a)$ and follows $\pi$ afterwards. For any $\pi^\star$, there exists $C_{\rho} > 0$ such that 
\[
\EE_{s'\sim \rho^{\pi^\star} } \bigg[  \biggl |  \frac{  \varrho (s,a,\pi) }{ \rho^{\star}} (s')    \bigg| ^2  \bigg]  \leq C_{\rho} ^2, \quad \forall~(s,a) \in S \times A,  \; \pi \in X.
\]
This assumption postulates that the distribution shift between the visitation measure induced by any policy $\pi$ and that induced by the optimal policy $\pi^*$ is bounded. Here the distribution shift is defined by the second-order moment of the density ratio. 
	 
Such  an assumption is commonly made in reinforcement learning literature with various forms, which are referred to \emph{concentrability coefficients} in general. 
It is conjectured in \cite{chen2019information} that such an assumption is necessary for theoretical analysis. Moreover, our version is slightly weaker than that in \cite{chen2019information}, which essentially assumes the $\ell_{\infty}$-norm of the  density ratio between $\varrho (s,a,\pi)$ and $\rho^\star$ is upper bounded. 
Moreover, a sufficient condition of \Cref{ass:concentrability} is that the initial distribution $\rho_0$ is lower bounded everywhere over $S\times A$. Such a condition also appears in existing work, e.g., \cite{agarwal2019optimality}.  Note that $\rho^* \geq (1-\gamma)\cdot \rho_0$.  Thus when the probability mass function of $\rho_0$ is lower bounded by $c_0$, \Cref{ass:concentrability} is satisfied with $C_{\rho} = (1-\gamma)^{-1} \cdot c_0^{-1}$. 
\end{itemize} 	
\fi 

\section{Proof of \Cref{cor:rl}}\label{app:rldet}
Hereafter, we let $\langle \cdot, \cdot \rangle$ and $\| \cdot \|$ denote the inner product and $\ell_1$-norm on  $\RR^{|A|}$, respectively. 
For any two policies $\pi_1$ and $\pi_2$, for any $s \in S$, $\| \pi_1(\cdot |s) - \pi_2(\cdot | s) \|_1$ is the total variation  distance between $\pi_1 (\cdot |s)$ and $\pi_2 (\cdot |s)$. 
% In addition, $\| \cdot \|_\infty$ is the $\ell_{\infty}$-norm 
% on $\RR^{|A|}$ which is dual to $\| \cdot \|_1$.
% Based on $\langle \cdot, \cdot \rangle$, $\| \cdot \|_1$, and $\| \cdot \|_\infty$ on $\RR^{|A|}$, 
% any probability distribution $\rho$ on $S$ induces an inner product, primal, and dual norms on the set of functions on $S\times A$. 
For any $f, f' \colon S \times A \rightarrow \RR $, define the following norms: 
\begin{align*}
\textstyle
% \langle f, f'  \rangle_\rho = \sum_{s \in S} \langle f(s, \cdot), f' ( s, \cdot ) \rangle \rho(s), \quad 
\| f  \|_{\rho,1} = \big[  \sum_{s \in S} \|f(s, \cdot ) \|_1^2 \rho(s) \big]  ^{1/2}, \quad 
\| f  \|_{\rho,\infty} = \big[  \sum_{s \in S} \|f(s, \cdot ) \|_\infty^2 \rho(s) \big]  ^{1/2}
\end{align*}
% Moreover, we define the norm
% % the dual norm of $\| \cdot \|_{\rho,1}$ as 
% $\| f \|_{\rho , \infty} = \sqrt{ \sum_{s \in S} \|f(s, \cdot ) \|_\infty^2 \rho(s) }$.
The following result can be derived from the H\"older's inequality: 
\begin{align} \textstyle
 \big | \langle f ,f'  \rangle_{\rho}  \bigr |  \leq  \sum_{s \in S} \bigl |  
 \langle  f (s, \cdot ),  f'(s, \cdot ) \rangle 
 \bigr | \rho(s)  
%  \leq \sum_{s \in S}  \| f (s, \cdot ) \|_1 \|  f'(s, \cdot ) \|_\infty \rho(s) 
 \leq \| f\|_{\rho,1} \| f' \|_{\rho, \infty} \label{eq:rho_holder}.
\end{align} 
%where the last inequality follows from the Cauchy-Schwarz inequality. 
Lastly, it can be shown that $\| \pi \|_{\rho,1} = 1, \| \pi \|_{\rho,\infty} \leq 1$.
% In addition, we have
% \[ 
% \| \pi \|_{\rho} = \biggl [  \sum_{s \in S} \| \pi (\cdot |s)  \|^2 d\rho(s)\biggr ]^{1/2} = 1, \qquad \| \pi \|_{\rho, *} = \biggl [  \sum_{s \in S} \| \pi (\cdot |s)  \|_* ^2 d\rho(s)\biggr ]^{1/2} \leq 1. 
% \] 

% Recall that $X $ denotes the set of feasible policies. 
% Also recall that 
% any probability distribution $\rho$ over $S$ induces an inner product between a value function $Q$ and a policy $\pi \in X$, and a distance between   any two policies $\pi_1, \pi_2 \in X$, which are  given by 
% \begin{align}
% \langle Q, \pi \rangle_\rho =  \int_S \langle Q(s, \cdot), \pi(\cdot | s) \rangle d\rho(s), \quad  \| \pi_1 - \pi_2  \|_{\rho } =  \bigg [  \int_S \|\pi_1(\cdot | s) - \pi_2(\cdot | s)\| ^2 d\rho(s) \bigg ] ^{1/2 } , \label{eq:policy_norms}
% \end{align}
% where $\langle  \cdot, \cdot \rangle  $ is inner product on $\RR^{|A|}$ and $\| \cdot \|$ is the $\ell_1$ norm on $\RR^{|A|}$.
% Recall that we let 
%  $\| \cdot \|_{*}$  denote the $\ell_{\infty}$-norm on $\RR^{|A|}$.
% Then, the inner product $\langle  \cdot, \cdot \rangle_{\rho}$ and the norm $\| \cdot \|_{\rho}$ in \eqref{eq:policy_norms} induce a distance  between two  action-value functions $Q_1$ and $Q_2$,
%   which is given by 
% \begin{align}
% \| Q _1 - Q_2 \|_{\rho, *} = \biggl [  \int_{S}  \| Q_1(s, \cdot ) - Q_2 (s, \cdot) \|_{*}^2 d \rho (s) \bigg ] ^{1/2 }. \label{eq:value_dualnorm} 
% \end{align}
Under \Cref{ass:linear_assumption}, 
$\theta^\star (\pi)$ is the solution to the inner problem with 
$Q^{\pi} (\cdot, \cdot) = \phi(\cdot, \cdot)^\top \theta^{\star} (\pi)$. 
Below we first show that $\theta^{\star} (\pi)$ and $Q^{\pi}$ are Lipschitz continuous maps with respect to $\| \cdot \|_{\rho^{\star},1}$, where $\rho^{\star}$ is the visitation measure of an optimal policy $\pi^\star$. 

\begin{Lemma}
\label{lem:rl_lip}
Under \Cref{ass:bdd_reward}--\ref{ass:concentrability},
for any two policies $ \pi_1, \pi_2 \in X$,  
\begin{align}
\| Q^{\pi_1}- Q^{\pi_2} \|_{\rho^\star, \infty} & \leq ( 1- \gamma)^{-2} \cdot \overline {r} \cdot C_{\rho} \cdot \| \pi_1 - \pi_2 \|_{\rho^\star, 1}, \notag \\
    \bigl \| \theta^{\star} (\pi_1) - \theta^{\star} (\pi_2) \bigr \|_2 & \leq  ( 1- \gamma)^{-2} \cdot \overline {r} \cdot C_{\rho}/  \mu_{\phi} \cdot \| \pi_1 - \pi_2 \|_{\rho^\star , 1},
\end{align}
where $\overline r$ is an upper bound on the reward function, $\mu_{\phi}$ is specified in \Cref{ass:stationary}, and  $C_{\rho} $ is defined in \Cref{ass:concentrability}. 
\end{Lemma}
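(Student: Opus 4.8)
The plan is to establish the two Lipschitz bounds by first controlling $\|Q^{\pi_1} - Q^{\pi_2}\|_{\rho^\star,\infty}$ and then deducing the parameter bound via the uniform lower eigenvalue bound from \Cref{ass:stationary}. The starting point is the Bellman equation \eqref{zweq:bellman}: since $Q^{\pi_i} = r + \gamma P^{\pi_i} Q^{\pi_i}$, subtracting gives
\[
Q^{\pi_1} - Q^{\pi_2} = \gamma \big( P^{\pi_1} Q^{\pi_1} - P^{\pi_2} Q^{\pi_2} \big) = \gamma P^{\pi_1} (Q^{\pi_1} - Q^{\pi_2}) + \gamma (P^{\pi_1} - P^{\pi_2}) Q^{\pi_2}.
\]
Solving this fixed-point relation yields $Q^{\pi_1} - Q^{\pi_2} = \gamma (I - \gamma P^{\pi_1})^{-1} (P^{\pi_1} - P^{\pi_2}) Q^{\pi_2}$. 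Equivalently, one can unfold the Neumann series and recognize $(1-\gamma)^{-1}(I - \gamma P^{\pi_1})^{-1}$ acting from state $(s,a)$ as integration against the discounted visitation measure $\varrho(s,a,\pi_1)$ defined in \eqref{eq:define_sa_visitation}. This is the natural device for bringing in \Cref{ass:concentrability}.

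Next I would bound the driving term $(P^{\pi_1} - P^{\pi_2}) Q^{\pi_2}$ pointwise: for fixed $(s,a)$, $[(P^{\pi_1} - P^{\pi_2})Q^{\pi_2}](s,a) = \sum_{s'} P(s'|s,a) \langle Q^{\pi_2}(s',\cdot), \pi_1(\cdot|s') - \pi_2(\cdot|s')\rangle$ — actually the cleaner route is to note that the operator $P^{\pi}$ as applied in \eqref{zweq:bellman} involves averaging $Q$ over the \emph{next} action according to $\pi$, so the difference produces terms of the form $\langle Q^{\pi_2}(s,\cdot), \pi_1(\cdot|s) - \pi_2(\cdot|s)\rangle$, which by Hölder \eqref{eq:rho_holder} is at most $\|Q^{\pi_2}(s,\cdot)\|_\infty \|\pi_1(\cdot|s) - \pi_2(\cdot|s)\|_1$. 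Since $|r(s,a)| \le \overline r$ (\Cref{ass:bdd_reward}) and $\gamma \in [0,1)$, the Neumann series bound gives $\|Q^{\pi_2}\|_\infty \le \overline r /(1-\gamma)$. Combining the visitation-measure representation with $\|\cdot\|_{\rho^\star,\infty}$, applying Jensen/Cauchy--Schwarz to move the $\rho^\star$-average inside, and invoking the concentrability bound $\EE_{s'\sim\rho^\star}[|\varrho(s,a,\pi)/\rho^\star(s')|^2] \le C_\rho^2$ converts the integral against $\varrho(s,a,\pi_1)$ into $C_\rho \|\pi_1 - \pi_2\|_{\rho^\star,1}$, picking up one more factor $(1-\gamma)^{-1}$ from the discounted-sum normalization. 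This produces the first inequality with constant $(1-\gamma)^{-2}\overline r\, C_\rho$.

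For the second inequality, under \Cref{ass:linear_assumption} we have $Q^{\pi_i} = \phi^\top \theta^\star(\pi_i)$, so $\phi(s,a)^\top(\theta^\star(\pi_1) - \theta^\star(\pi_2)) = (Q^{\pi_1} - Q^{\pi_2})(s,a)$. Taking the weighted second moment under $s\sim\mu^{\pi^\star}$, $a\sim\pi^\star(\cdot|s)$ — or whichever stationary distribution \Cref{ass:stationary} is stated for — gives $(\theta^\star(\pi_1)-\theta^\star(\pi_2))^\top \Sigma (\theta^\star(\pi_1)-\theta^\star(\pi_2)) = \EE[(Q^{\pi_1}-Q^{\pi_2})^2]$ where $\Sigma \succeq \mu_\phi^2 I_d$. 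Hence $\mu_\phi^2 \|\theta^\star(\pi_1)-\theta^\star(\pi_2)\|_2^2 \le \EE[(Q^{\pi_1}-Q^{\pi_2})^2]$, and the right-hand side is bounded by $\|Q^{\pi_1}-Q^{\pi_2}\|_{\rho^\star,\infty}^2$ up to a change of measure — here one must be a little careful that the measure in \Cref{ass:stationary} and $\rho^\star$ are compatible, which again is handled by the concentrability-type control (or by noting the $\|\cdot\|_{\rho^\star,\infty}$ norm dominates the relevant pointwise-squared average after another application of \Cref{ass:concentrability}). Dividing by $\mu_\phi$ and inserting the first bound yields the claimed constant $(1-\gamma)^{-2}\overline r\, C_\rho/\mu_\phi$.

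The main obstacle I anticipate is bookkeeping the two distinct measures — the discounted visitation measure $\rho^\star$ appearing in the norms versus the stationary distribution $\mu^{\pi^\star}$ used to define the feature covariance — and making sure the concentrability coefficient $C_\rho$ is the right object to bridge them, so that no hidden extra constants (e.g. additional $(1-\gamma)$ factors) slip in or out. Getting the exponent of $(1-\gamma)$ exactly right requires carefully tracking where the $(1-\gamma)^{-1}$ normalizations of the visitation measures enter versus where they cancel against $\gamma$-geometric sums; everything else is a routine Hölder/Jensen chase.
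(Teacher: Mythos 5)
Your proposal is correct and follows essentially the same route as the paper: the resolvent identity $Q^{\pi_1}-Q^{\pi_2}=\gamma(I-\gamma P^{\pi_1})^{-1}(P^{\pi_1}-P^{\pi_2})Q^{\pi_2}$ is exactly the performance-difference/visitation-measure representation the paper uses, followed by the same H\"older bound $\|Q^{\pi_2}\|_\infty\le \overline r/(1-\gamma)$, the same change of measure with Cauchy--Schwarz and \Cref{ass:concentrability}, and the same eigenvalue lower bound on the feature covariance for the second inequality. The only thing you over-complicate is the measure-compatibility worry at the end: the first bound is obtained pointwise, uniformly in $(s,a)$, so the second moment of $Q^{\pi_1}-Q^{\pi_2}$ under $\mu^{\pi^\star}\otimes\pi^\star$ is controlled by that same constant with no further concentrability argument.
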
 
\vspace{-0.2cm}
The proof of the above lemma is relegated to \S\ref{app:lips_RL}. 

In the sequel, we first derive coupled inequalities on the non-negative sequences ${\rm OPT}^k \eqdef \EE[\ell(\pi^k) - \ell(\pi^\star)]$, $\Delta_Q^k \eqdef \EE[ \| \theta^k - \theta^\star( \pi^{k-1} ) \|^2 ]$, $\EE[ \| \pi^k - \pi^{k+1} \|_{\rho^\star,1}^2 ]$, then we apply \Cref{lem:recur_lem} to derive the convergence rates of \acm. 
Using the performance difference lemma %\cite[Lemma 6.1]{kakade2002approximately} 
[cf.~\eqref{eq:key:1}], we obtain the following
%we have 
%\beqq
%     \ell(\pi^{k}) - \ell(\pi^\star) =  -  (1-\gamma)^{-1}\langle Q^{\pi^k}, \pi^k - \pi^\star \rangle_{\rho^\star}. 
%    %  \;  \ell(\pi^{k+1}) - \ell(\pi^\star) & = -  (1-\gamma)^{-1}\langle Q^{\pi^{k+1}}, \pi^{k+1} - \pi^\star \rangle_{\rho^\star}\nonumber.
%\eeqq
%By direct computation,  we obtain
\begin{align}
    \ell(\pi^{k+1})-\ell(\pi^k) 
     & =  - (1-\gamma)^{-1}\langle Q^{\pi^{k+1}}, \pi^{k+1} - \pi^\star \rangle_{\rho^\star} + (1-\gamma)^{-1} \langle Q^{\pi^k}, \pi^k - \pi^\star \rangle_{\rho^\star}\nonumber\\
    % & \quad  =  (1-\gamma)^{-1}\Bigl [\langle - Q^{\pi^{k+1}}, \pi^{k+1} - \pi^\star \rangle_{\rho^\star} +  \langle Q^{\pi^k}, \pi^k - \pi^{k+1} \rangle_{\rho^\star} +  \langle Q^{\pi^k}, \pi^{k+1} - \pi^\star \rangle_{\rho^\star}\Bigr ] \nonumber\\
    & =  (1-\gamma)^{-1}\langle - Q^{\pi^k}, \pi^{k+1} - \pi^{k} \rangle_{\rho^\star} +   (1-\gamma)^{-1}\langle Q^{\pi^{k}}-Q^{\pi^{k+1} }, \pi^{k+1} - \pi^\star \rangle_{\rho^\star} .\label{eq:policy_bd_dy}
    \end{align}
    Applying the inequality \eqref{eq:rho_holder}, we further have 
    \begin{align}
       & \langle Q^{\pi^{k}}-Q^{\pi^{k+1} }, \pi^{k+1} - \pi^\star \rangle_{\rho^\star}   \leq \| Q^{\pi^{k}}-Q^{\pi^{k+1} } \|_{\rho^\star, \infty }  \| \pi ^{k+1} - \pi^\star \|_{\rho^\star,1 } \label{eq:policy_bd_dy4} \\
        & ~~\qquad \qquad 
        % \leq 2 \| Q^{\pi^{k}}-Q^{\pi^{k+1} } \|_{\rho^\star,\infty} 
          \leq 2 L_Q \| \pi^k - \pi^{k+1} \|_{\rho^\star,1} \leq \frac{1-\gamma}{4 \alpha}\|{\pi^{k+1}}-{\pi^k}\|^2_{\rho^\star,1} + \frac{4 L^2_Q \alpha}{1-\gamma}, \label{eq:policy_bd_dy2}
    \end{align}
    where $L_Q := (1-\gamma)^{-2} \overline{r} \cdot C_\rho$.
    The above inequality follows from $\| \pi^k - \pi^{k+1} \|_{\rho^\star,1} \leq 2  $ for any $\pi_1, \pi_2 \in X$ and applying \Cref{lem:rl_lip}.  
    Then, combining \eqref{eq:policy_bd_dy}, \eqref{eq:policy_bd_dy2} leads to
    \begin{align} 
     \ell(\pi^{k+1})-\ell(\pi^k) \le  \frac{-1}{1-\gamma}\langle Q^{\pi^k}, \pi^{k+1} - \pi^{k} \rangle_{\rho^\star} +  \frac{1}{4\alpha}\|{\pi^{k+1}}-{\pi^k}\|^2_{\rho^\star,1} +   4 \frac{\alpha L^2_Q}{(1-\gamma)^2} . \label{eq:policy_bd_dy3}
\end{align}
 
Let us bound the first term in the right-hand side of \eqref{eq:policy_bd_dy3}. To proceed, note that the policy update \eqref{zweq:ppo_restate_n} can be implemented for each state individually as below:
\begin{align}
\pi^{k+1} (\cdot | s ) = \hspace{-0.3cm} \argmin_{ \nu :\!~ \sum_{a} \nu(a) = 1, \nu(a) \geq 0  } \Bigl \{ - ( 1- \gamma )^{-1} \cdot \langle Q_{\theta^{k+1}} (s, \cdot ), \nu  \rangle   + 1/ \alpha_k \cdot D_{\psi } \big ( \nu,  \pi^k (\cdot | s)    \bigr  )  \Big \},
\end{align}
for all $s \in S$. Observe that we can modify ${\rho^{\pi^k}}$ in  \eqref{zweq:ppo_restate_n} to ${\rho^{\star}}$ without changing the optimal solution for this subproblem. Specifically, \eqref{zweq:ppo_restate_n} can be written as 
\begin{align} 
\pi^{k+1}   = \argmin_{\pi \in X} \Bigl\{-(1-\gamma)^{-1}\langle Q_{\theta^{k+1}}, \pi-\pi^k \rangle_{\rho^\star } + \frac{1}{\alpha} \bar{D}_{\psi, \rho^\star}(\pi, \pi^k)\Bigr\}. \label{eq:restate_ppo_agin}
\end{align}
We have 
\beqq \notag
- (1-\gamma)^{-1} \langle Q^{\pi^k}, \pi^{k+1}-\pi^k\rangle_{\rho^*} = (1-\gamma)^{-1} \big[ \langle Q_{\theta^{k+1}} - Q^{\pi^k}, \pi^{k+1}-\pi^k\rangle_{\rho^{\star}} - \langle Q_{\theta^{k+1}}, \pi^{k+1}-\pi^k\rangle_{\rho^{\star}} \big].
\eeqq
Furthermore, from \eqref{eq:restate_ppo_agin}, we obtain
\begin{align}
& \frac{\langle Q_{\theta^{k+1}}, \pi^{k+1}-\pi^k\rangle_{\rho^{\star}}}{1-\gamma} \geq \frac{1}{\alpha} \sum_{s \in S} \langle \nabla D_{\psi}(\pi^{k+1}(\cdot | s), \pi^k(\cdot | s)), \pi^{k+1}(\cdot | s) - \pi^k(\cdot | s)\rangle \rho^\star(s) ,
 \label{eq:app_innerprod}
\end{align}
where the inequality follows from the optimality condition of the mirror descent step.
Meanwhile, the 
$1$-strong convexity of ${D}_{\psi}(\cdot,\cdot)$ 
implies that 
\begin{align}
\bigl \langle \nabla D_{\psi}\bigl (\pi^{k+1}(\cdot | s), \pi^k(\cdot | s)\bigr ), \pi^{k+1}(\cdot | s) - \pi^k(\cdot | s) \bigr \rangle 
% & = \langle \nabla \psi \bigl ( \pi^{k+1}(\cdot | s) \bigr ) -\nabla \psi \bigl ( \pi^{k}(\cdot | s) \bigr ) , 
% \pi^{k+1}(\cdot | s) - \pi^k(\cdot | s) \bigr \rangle \notag \\
&
\ge  \| \pi^{k+1}(\cdot | s) - \pi^k(\cdot | s) \|^2. \label{eq:apply_kl_convexity}
\end{align}
Thus, combining \eqref{eq:app_innerprod} and \eqref{eq:apply_kl_convexity}, and applying 
  Young's inequality, we further have 
\begin{align}
 & - (1-\gamma)^{-1} \langle Q^{\pi^k}, \pi^{k+1}-\pi^k\rangle_{\rho^*} \nonumber\\ 
 & \quad  \leq   \frac{1 }{4\alpha}  \|\pi^{k+1}-\pi^k\|_{\rho^\star,1}^2 + \alpha (1-\gamma)^{-2} \cdot  \|Q^{\pi^k} - Q_{\theta^{k+1}} \|_{\rho^{\star},\infty}^2 - \frac{1}{ \alpha}\|\pi^{k+1}-\pi^k\|_{\rho^{\star},1}^2\nonumber\\
   & \quad   =   \alpha (1-\gamma)^{-2} \cdot \|Q^{\pi^k} - Q_{\theta^{k+1}} \|_{\rho^{\star},\infty}^2- \frac{3}{4\alpha}\|\pi^{k+1}-\pi^k\|_{\rho^\star,1}^2 . \label{eq:meaningless1}
\end{align}
% Notice that this step is analogous to using the property \eqref{eq:lip:f:bar} from \Cref{lem:lips}.
% Furthermore, recall that we define $\Delta^{k+1}_Q : =\mathbb{E}[\| \theta^{k+1} - \theta^{\star} (\pi^k) \|_2^2]$. It remains to relate $\|Q^{\pi^k} - Q_{\theta^{k+1}} \|^2_{\rho^{\star},\infty}$ to $\Delta_Q^{k+1}$.
By direct computation and using $\| \phi(s,a ) \| \leq 1$ [cf.~\Cref{ass:linear_assumption}], we have 
\begin{align}
\| Q^{\pi^k} - Q_{\theta^{k+1}} \|_{\rho^\star , \infty} ^2 & = \sum_{s \in S} \Bigl \{ \max_{a\in A} \bigl  |  \phi(s,a) ^\top [ \theta^\star (\pi^k)  - \theta^{k+1} ]  \bigr | \Bigr \}^2  \rho^\star (s)  \notag \\
& \leq \sum_{s \in S}  \max_{ a \in A}  \bigl \{\|   \phi(s,a) \|^2 \bigr \} \|  \theta^\star (\pi^k)  - \theta^{k+1} \|^2 \rho^\star (s)  \leq \|  \theta^\star (\pi^k)  - \theta^{k+1} \|^2. \label{eq:meaningless2}
\end{align}
Combining \eqref{eq:policy_bd_dy3}, \eqref{eq:meaningless1}, and \eqref{eq:meaningless2},
we obtain 
\[
    \ell(\pi^{k+1})-\ell(\pi^k)  \le      \alpha (1-\gamma)^{-2} \|  \theta^\star (\pi^k)  - \theta^{k+1} \|^2  -  \frac{ 1}{2\alpha}\|{\pi^{k+1}}-{\pi^k}\|^2_{\rho^\star,1} + 4   (1-\gamma)^{-2}L^2_Q \alpha .
\]
Taking full expectation leads to
\beq \label{eq:optdiff_d1}
    \mbox{OPT}^{k+1} - \mbox{OPT}^{k} \le   \alpha (1-\gamma)^{-2}   \Delta^{k+1}_Q - \frac{1}{2\alpha} \EE[ \|\pi^{k+1}-\pi^k\|_{\rho^*,1}^2] +  4  (1-\gamma)^{-2} L^2_Q \alpha.
\eeq

% {\noindent \bf Step 2 (Bounding $\Delta_Q^{k}$).}
% Next let us briefly look at the inner problem. 
Next, we consider the convergence of $\Delta_Q^k$. 
Let $\mathcal{F}_k = \sigma \{ \theta^0, \pi^0, \ldots, \theta^k, \pi^k \}$ be the $\sigma$-algebra generated by the first $k+1$ actor and critic updates.  
Under \Cref{ass:linear_assumption}, we can write the conditional expectation of $h_g^k$ as   
\begin{align}
\EE [ h_g^k | \cF_k ] & =  \EE_{ \mu^{\pi^k} } \bigl [ \{ Q_{\theta^k}(s, a) - r(s, a) - \gamma Q_{\theta^k}(s', a') \} \phi(s, a) | \cF_k \bigr] \notag \\
& =  \EE_{ \mu^{\pi^k} } \bigl[ \phi (s, a) \{ \phi(s,a)  - \gamma \phi (s', a') \}^\top   \bigr ] [\theta^{k} - \theta^\star (\pi^k) ],
\end{align} 
where $\EE_{\mu^{\pi^k}}[\cdot]$ denotes the expectation taken with $s \sim \mu^{\pi^k}$, $a \sim \pi^k(\cdot|s)$, $s' \sim P(\cdot|s,a)$, $a' \sim \pi^k(\cdot|s')$.
Under \Cref{ass:linear_assumption} and \ref{ass:stationary}, Lemma 3 of \cite{bhandari2018finite} shows that $\EE[h_g^k|\cF_k]$ is a semigradient of the MSBE function $\| Q_{\theta^k} - Q_{\theta^\star(\pi^k) } \|_{\mu^{\pi^k} \otimes \pi^k }^2$. Particularly, we obtain 
\beqq
   \EE [ h_g^k | \cF_k ]^\top [ \theta^k - \theta^\star (\pi^k) ]    
    \geq ( 1- \gamma) \| Q_{\theta^k} - Q_{\theta^\star(\pi^k) } \|_{\mu^{\pi^k} \otimes \pi^k } ^2  \geq \mu_{\sf td} \| \theta^k - \theta^{\star } (\pi^k) \|_2^2,  \label{eq:russo_lem1} 
\eeqq
where we have let $\mu_{\sf td} = (1 - \gamma) \mu_\phi ^2$.
Moreover, Lemma 5 of \cite{bhandari2018finite} demonstrates that the second order moment $\EE[ \| h_g^k \|_2^2 | \cF_k ]$ is bounded as
\beqq
   \EE [ \| h_g^k \|_2^2  | \cF_k ] \leq  8 \| Q_{\theta^k} - Q_{\theta^\star(\pi^k) } \|_{\mu^{\pi^k} \otimes \pi^k } ^2  + \sigma_{{\sf td}}^2 \leq 8 \| \theta^k - \theta^{\star } (\pi^k) \|_2^2 + \sigma_{{\sf td}}^2 , \label{eq:russo_lem2} 
\eeqq
where $\sigma_{{\sf td}}^2 = 4 \overline r^2 (1- \gamma)^{-2} $. 
Combining \eqref{eq:russo_lem1}, \eqref{eq:russo_lem2} and recalling $\beta \leq \mu_{\sf td} / 8$, it holds 
\begin{align}
\mathbb{E}[\|\theta^{k+1}-\theta^{\star}(\pi^k)\|_2^2|{\cal F}_k]&  =  \|\theta^{k}-\theta^{\star}(\pi^k)\|_2^2 - 2 \beta \EE [ h_g^k | \cF_k ]^\top [ \theta^k - \theta^\star (\pi^k) ] + \beta^2  \EE [ \| h_g^k \|_2^2  | \cF_k ] \notag \\
& \leq \bigl ( 1 - 2 \mu_{\sf td} \beta  + 8 \beta ^2 ) \cdot  \|\theta^{k}-\theta^{\star}(\pi^k)\|_2^2  + \beta ^2 \cdot \sigma_{\sf td}^2  \notag \\
& \leq (1 - \mu_{\sf td} \beta ) \cdot  \|\theta^{k}-\theta^{\star}(\pi^k)\|_2^2  + \beta ^2 \cdot \sigma_{\sf td}^2 . \label{eq:td_bounds1}
\end{align}
By Young's inequality and Lemma \ref{lem:rl_lip},
we further have 
\begin{align} 
& \mathbb{E}[\|\theta^{k+1}-\theta^{\star}(\pi^k)\|_2^2|{\cal F}_k]  \nonumber\\
&\quad  \le (1+c) (1- \mu_{ {\sf td}} \beta)   \|\theta^{k}-\theta^{\star}(\pi^{k-1})\|_2^2  + (1+1/c)  \|\theta^{\star} (\pi^{k}) -\theta^{\star}(\pi^{k-1})\|_2^2  + \beta^2 \sigma_{\sf {td}}^2 \nonumber\\
%&\quad  \le (1- \mu_{ {\sf td}} \beta /2 ) \cdot \|\theta^{k}-\theta^{\star}(\pi^{k-1})\|_2^2 + \Big (\frac{2}{\mu_{ {\sf td}}  \beta }-1 \Bigr ) \cdot \|\theta^{\star} (\pi^k) -\theta^{\star}(\pi^{k-1})\|_2^2 + \beta^2 \cdot \sigma_{\sf {td}}^2 \nonumber\\
&\quad  \le   (1-\mu_{ {\sf td}} \beta /2 )  \|\theta^{k}-\theta^{\star}(\pi^{k-1})\|_2^2 + \Big (\frac{2}{\mu_{\sf {td}}   \beta }-1 \Bigr ) \overline L_Q   \| \pi^k - \pi^{k+1} \|^2_{\rho^\star,1}  + \beta^2 \sigma_{ {\sf td}}^2, \label{eq:td_bounds2}
\end{align}
where we have chosen $c>0$ such that $(1+ c) (1- \mu_{\sf td} \beta) = 1 - \mu_{\sf td} \beta /2$, which implies that   
$1/ c + 1  = 2/ (\mu_{\sf td} \beta ) -1 >0$ [cf.~\eqref{eq:alpha:convex:new}];
The last inequality comes from \Cref{lem:rl_lip} with the constant $\overline L_Q = (1- \gamma)^{-4} \cdot \overline r \cdot C_{\rho}^2 \cdot \mu_{\phi}^{-2}$. 

From \eqref{eq:optdiff_d1}, \eqref{eq:td_bounds2}, we identify that condition \eqref{eq:recurnew} of \Cref{lem:recur_lem} holds with:
\begin{align*}
\begin{split}
	& \Omega^k = {\rm OPT}^k, \; \Theta^k = \EE[ \| \pi^k - \pi^{k-1} \|_{\rho^*,1}^2], \; {\rm c}_0 = \frac{1}{2\alpha}, \; {\rm c}_1 = \frac{\alpha}{(1-\gamma)^2}, \; {\rm c}_2 = \frac{4 L^2_Q \alpha}{(1-\gamma)^2}  , \\
	& \Upsilon^k = \EE[ \| \theta^k - \theta^\star(\pi^{k-1}) \|^2 ], \; {\rm d}_0 = \mu_{ {\sf td}} \beta /2, \; {\rm d}_1 = \Big (\frac{2}{\mu_{\sf {td}}   \beta }-1 \Bigr ) \overline L_Q>0, \; {\rm d}_2 = \beta^2 \cdot \sigma_{ {\sf td}}^2.
\end{split}
\end{align*} %This concludes our first step. 
Selecting the step sizes as in 
\eqref{eq:alpha:convex:new}, one can verify that 
$\frac{\alpha}{\beta}<\frac{\mu_{\sf td}(1-\gamma)}{16 \sqrt{\bar{L}_Q}}$.
This ensures \vspace{-.1cm}
\begin{align}
{\rm c}_0 - {\rm c}_1 {\rm d}_1 ( {\rm d}_0)^{-1} >{1}/{(4\alpha)}, \;\;  {\rm d}_0 - {\rm c}_1 {\rm d}_1 ({\rm c}_0)^{-1} >{\mu_{\sf td}\beta}/{4}.
\end{align}
Applying \Cref{lem:recur_lem}, we obtain for any $K \geq 1$ that
\begin{align*}
\frac{1}{K}\sum_{k=1}^{K}\mathbb{E}[\|\pi^k-\pi^{k+1}\|^2_{\rho^\star,1}]&\le \frac{{\rm OPT}^0 \cdot  4 \alpha + \frac{8 \alpha^2 (1-\gamma)^{-2}}{\mu_{\sf td}\beta}(\Delta^0_Q + \beta^2 \sigma^2_{\sf td})}{K} + \frac{8 \alpha^2 (4 {L}^2_Q  + \beta \sigma^2_{\sf td}/\mu_{\sf td})}{(1-\gamma)^2}\\
\frac{1}{K}\sum_{k=1}^{K}\mathbb{E}[\Delta^{k+1}_Q] &\le  \frac{\mathbb{E}[\Delta^0_Q] + \beta^2 \sigma^2_{\sf td} + \frac{4 \alpha}{\mu_{\sf td} \beta} \bar{L}_Q {\rm OPT}^0}{\mu_{\sf td} \beta K/4} + \frac{\beta^2 \sigma^2_{\sf td} + \frac{16 \alpha^2}{\mu_{\sf td}\beta} (1-\gamma)^{-2}{L}^2_Q \alpha^2}{\mu_{\sf td}\beta/4}.
\end{align*}
Particularly, plugging in $\alpha \asymp \Kmax^{-3/4}$, $\beta \asymp \Kmax^{-1/2}$ shows that the convergence rates are $\Kmax^{-1} \sum_{k=1}^{\Kmax}\mathbb{E}[\|\pi^k-\pi^{k+1}\|^2_{\rho^\star,1}] = \mathcal{O}(\Kmax^{-3/2})$, $\Kmax^{-1} \sum_{k=1}^{\Kmax}\mathbb{E}[\Delta^{k+1}_Q] = \mathcal{O}(\Kmax^{-1/2})$.
 
Our last step is to analyze the convergence rate of the objective value ${\rm OPT}^k$.
% Recall that $\pi^{k+1} $ is the solution to the mirror descent step \eqref{eq:restate_ppo_agin}.
To this end, we observe the following three-point inequality \cite{beck2017first}
\begin{align}
\frac{-\langle Q_{\theta^{k+1}}, \pi^{k+1} - \pi^\star \rangle_{\rho^\star}}{1-\gamma} \le \frac{1}{\alpha}\Bigl [ \bar{D}_{\psi,\rho^\star}( \pi^\star, \pi^k)- \bar{D}_{\psi,\rho^\star}( \pi^\star, \pi^{k+1})- \bar{D}_{\psi,\rho^\star}(\pi^{k+1},\pi^k)\Bigr ] .\label{eq:mirror:property} 
\end{align}
Meanwhile, by the inequalities \eqref{eq:rho_holder}, \eqref{eq:policy_bd_dy}, \eqref{eq:policy_bd_dy4},  we have{\small
\begin{align}
   & \ell(\pi^{k+1})-\ell(\pi^k)  =  - \frac{1}{1-\gamma} \langle   Q^{\pi^{k}}, \pi^{k+1} - \pi^{k} \rangle_{\rho^\star} +  \frac{1}{1-\gamma}\langle Q^{\pi^{k}}-Q^{\pi^{k+1}}, \pi^{k+1} - \pi^\star \rangle_{\rho^\star} \nonumber\\
     & \le \frac{1}{1-\gamma} \big[ \langle - Q^{\pi^k}, \pi^{k+1} - \pi^{\star} \rangle_{\rho^\star} - \langle Q^{\pi^k}, \pi^{\star} - \pi^{k} \rangle_{\rho^\star} + \| {\pi^{k+1}}-{\pi^\star }\|_{\rho^{\star},1} \| Q^{\pi^k} - Q^{\pi^{k+1} } \|_{\rho^\star, \infty} \big] \nonumber\\
    & \leq \frac{1}{1-\gamma} \big[ \langle -Q^{\pi^k}, \pi^{k+1} - \pi^{\star} \rangle_{\rho^\star} - \langle Q^{\pi^k}, \pi^{\star} - \pi^{k} \rangle_{\rho^\star} +   2 L_Q \| \pi^{k+1} - \pi^k \|_{\rho^\star,1} \big] , \notag
    % \label{eq:rl_final1}
    \end{align}
}where the last inequality follows from \Cref{lem:rl_lip}. Now, with the performance difference lemma $\ell(\pi^*) - \ell(\pi^k) = (1-\gamma)^{-1}\langle -Q^{\pi^k}, \pi^{\star} - \pi^{k} \rangle_{\rho^\star}$, the above simplifies to 
\beqq \notag
\ell(\pi^{k+1}) - \ell(\pi^\star) \leq (1-\gamma)^{-1} \big[ - \langle Q^{\pi^k}, \pi^{k+1} - \pi^{\star} \rangle_{\rho^\star} +   2 L_Q \| \pi^{k+1} - \pi^k \|_{\rho^\star,1} \big]
\eeqq
With 
$\langle Q^{\pi^k}, \pi^{k+1} - \pi^{\star} \rangle_{\rho^\star} = \langle Q^{\pi^k} - Q_{\theta^{k+1}}, \pi^{k+1} - \pi^{\star} \rangle_{\rho^\star} + \langle Q_{\theta^{k+1}}, \pi^{k+1} - \pi^{\star} \rangle_{\rho^\star}$ and applying the three-point inequality \eqref{eq:mirror:property}, we have {\small
    \begin{align} 
      \ell(\pi^{k+1})-\ell(\pi^\star) 
    % &  \leq  (1-\gamma)^{-1} \big[ - \langle Q^{\pi^k}-Q_{\theta^{k+1}}, \pi^{k+1} - \pi^{\star} \rangle_{\rho^\star} - \langle Q_{\theta^{k+1}}, \pi^{k+1} - \pi^{\star} \rangle_{\rho^\star} - \langle Q^{\pi^k}, \pi^{\star} - \pi^{k} \rangle_{\rho^\star} \big] \notag \\
    % & \qquad + 2 (1-\gamma)^{-1} L_Q \cdot \| \pi^{k+1} - \pi^k \|_{\rho^\star,1} \notag \\
    & \leq \frac{2}{1-\gamma} \big[ L_Q \| \pi^{k+1} - \pi^k \|_{\rho^\star,1} + \| Q^{\pi^k} - Q_{\theta^{k+1}} \|_{\rho^\star , \infty} \big] \notag \\
    &\quad + \frac{1}{\alpha}\Bigl [ \bar{D}_{\psi,\rho^\star}( \pi^\star, \pi^k)- \bar{D}_{\psi,\rho^\star}( \pi^\star, \pi^{k+1})- \bar{D}_{\psi,\rho^\star}(\pi^{k+1},\pi^k)\Bigr ].\notag \\
    & \hspace{-2cm} \leq  \frac{2}{1-\gamma} \big[ L_Q \| \pi^{k+1} - \pi^k \|_{\rho^\star,1} + \| \theta^{\star}
        (\pi^k) - \theta^{k+1} \|_{2} \big] + \frac{1}{\alpha}\bigl [ \bar{D}_{\psi,\rho^\star}( \pi^\star, \pi^k)- \bar{D}_{\psi,\rho^\star}( \pi^\star, \pi^{k+1}) \bigr ] \notag
    \end{align}}%
    % We can further write the above as
    % \begin{align}
    %     \ell(\pi^{k+1}) - \ell(\pi^\star) & \leq  2     (1-\gamma)^{-1} L_Q \cdot \| \pi^{k+1} - \pi^k \|_{\rho^\star,1}   +2  (1-\gamma)^{-1} \cdot  \| \theta^{\star}
    %     (\pi^k) - \theta^{k+1} \|_{2} \notag \\
    % &\quad + \frac{1}{\alpha}\bigl [ \bar{D}_{\psi,\rho^\star}( \pi^\star, \pi^k)- \bar{D}_{\psi,\rho^\star}( \pi^\star, \pi^{k+1}) \bigr ]  . \label{eq:rl_final3}
    % \end{align}
    where the last inequality uses \eqref{eq:meaningless2} and the fact that $\bar{D} _{\psi, \rho^\star}$ is non-negative. 
    Finally, taking the full expectation on both sides of the inequality, we obtain 
\begin{align}
\mbox{OPT}^{k+1} &   \le 2  (1-\gamma)^{-1}  \EE[ \| \theta^{\star}
        (\pi^k) - \theta^{k+1} \|_{2} ] + 2  (1-\gamma)^{-1} L_Q  \EE[ \| {\pi^{k+1}}-{\pi^k}\|_{\rho^{\star},1} ]  \nonumber\\
& \quad +\frac{1}{\alpha}\EE \big [\bar{D}_{\psi,\rho^\star}( \pi^\star, \pi^k)- \bar{D}_{\psi,\rho^\star}( \pi^\star, \pi^{k+1}) \bigr ] .
\end{align}
Summing up both sides from $k=0$ to $k=\Kmax-1$ and dividing by $\Kmax$ yields
\begin{align}
\begin{split}
\frac{1}{\Kmax} \sum_{k=1}^{\Kmax} \mbox{OPT}^{k} & \leq \frac{1}{\alpha \Kmax} \big\{ \bar{D}_{\psi,\rho^\star}( \pi^\star, \pi^0)- \bar{D}_{\psi,\rho^\star}( \pi^\star, \pi^{\Kmax}) \big\} \\
& \hspace{-1cm} + \frac{2  }{(1-\gamma)} \frac{1}{\Kmax} \sum_{k=1}^{\Kmax} \big\{ \EE[\| \theta^{\star}
        (\pi^{k-1}) - \theta^{k} \|_{2} ] + L_Q \cdot \EE[ \| {\pi^{k}}-{\pi^{k-1}} \|_{\rho^{\star},1} ]  \big\}.
\end{split}
\end{align}
Using Cauchy-Schwarz's inequality, it can be easily seen that the right-hand side is ${\cal O}(\Kmax^{-1/4})$. This concludes the proof of the theorem.

\vspace{-0.2cm}
\subsection{Proof of \Cref{lem:rl_lip}} \label{app:lips_RL}
We first bound $|Q^{\pi_1} (s,a) - Q^{\pi_2} (s,a) |$. %for any $(s,a) \in S\times A$.
By the Bellman equation \eqref{zweq:bellman} and the performance difference lemma \eqref{eq:key:1}, we have 
\begin{align}
Q^{\pi_1} (s,a) - Q^{\pi_2} (s,a) 
%& = \bigl [r(s,a) + \sum_{s' \in S} P(s'| s,a) V^{\pi_1} (s') \bigr ]     - \bigl [r(s,a) + \sum_{s' \in S } P(s'| s,a) V^{\pi_2} (s')  \bigr ]\notag \\
& =  \sum_{s' \in S} P(s' | s,a) \cdot \bigl [ V^{\pi_1}(s') - V^{\pi_2} (s')  \bigr  ] \notag \\
& \hspace{-1.5cm} = (1 - \gamma)^{-1}  \sum_{s'\in S} P(s' | s,a) \cdot \EE_{\tilde s \sim \tilde \varrho(s',\pi_1) } \bigl [\big  \langle  Q^{\pi_2} (\tilde s, \cdot ) , \pi_1(\cdot | \tilde s) - \pi_2(\cdot | \tilde s) \big \rangle  \bigr ], \label{eq:rl_lip1} 
\end{align}
where $\tilde \varrho(s', \pi_1)$ is the visitation measure obtained by the Markov chain induced by $\pi_1$ with the initial state fixed to $s'$. 
Recall the definition of the visitation measure  $\varrho (s,a, \pi)$ in \eqref{eq:define_sa_visitation}. 
We rewrite \eqref{eq:rl_lip1} as 
\begin{align}
    Q^{\pi_1} (s,a) - Q^{\pi_2} (s,a) = (1 - \gamma)^{-1 } \cdot \EE_{\tilde s \sim   \varrho(s , a,\pi_1) } \bigl [\big  \langle  Q^{\pi_2} (\tilde s, \cdot ) , \pi_1(\cdot | \tilde s) - \pi_2(\cdot | \tilde s) \big \rangle  \bigr ]. \label{eq:rl_lip2}
\end{align}
Moreover, notice that $\sup_{(s,a) \in S\times A } |Q^{\pi}(s,a) | \leq (1- \gamma)^{-1} \cdot \overline {r}$ under \Cref{ass:bdd_reward}.
Then, applying H\"older's inequality to \eqref{eq:rl_lip2}, we obtain 
\begin{align}
   \bigl | Q^{\pi_1} (s,a) - Q^{\pi_2} (s,a)  \bigr |  & \leq ( 1- \gamma )^{-1} \cdot \EE_{\tilde s \sim \tilde \varrho(s',\pi_1) } \bigl [    \|  Q^{\pi_2} (\tilde s, \cdot ) \|_\infty  \cdot \| \pi_1(\cdot | \tilde s) - \pi_2(\cdot | \tilde s) \|_1   \bigr ] \notag \\
   & \hspace{-1.75cm} \leq (1 -\gamma)^{-2} \cdot \overline{r} \cdot \EE_{\tilde s \sim \rho^\star } \biggl [ \frac{\varrho(s,a, \pi)}{\rho^\star} (\tilde s) \cdot   \| \pi_1(\cdot | \tilde s) - \pi_2(\cdot | \tilde s) \|_1 \bigg] \notag \\
   & \hspace{-1.75cm} \leq  (1 -\gamma)^{-2} \cdot \overline{r} \cdot \biggl \{    \EE_{\tilde s \sim \rho^\star } \biggl [ {\big| \frac{\varrho(s,a, \pi)}{\rho^\star} (\tilde s) \big|^2}  \bigg] \EE_{\tilde s \sim \rho^\star }  \bigl [ \| \pi_1(\cdot | \tilde s) - \pi_2(\cdot | \tilde s) \|_1^2 \bigr ] \biggr \} ^{1/2}  \notag \\
   & \hspace{-1.75cm} \leq ( 1- \gamma)^{-2} \cdot \overline {r} \cdot C_{\rho} \cdot \| \pi_1 - \pi_2 \|_{\rho^\star,1}, \label{eq:rl_lip3}
\end{align}
where the second inequality is from the boundedness of $Q^{\pi}$, the third one is the Cauchy-Schwarz inequality, and the last one is from \Cref{ass:concentrability}. 
Finally, we have 
\begin{align*}
\| Q^{\pi_1}- Q^{\pi_2} \|_{\rho^\star, \infty} \leq ( 1- \gamma)^{-2} \cdot \overline {r} \cdot C_{\rho} \cdot \| \pi_1 - \pi_2 \|_{\rho^\star,1}.
\end{align*}
It remains to bound $\| \theta^\star (\pi_1) - \theta^\star (\pi_2) \|^2$. 
Under \Cref{ass:linear_assumption}, we have 
\begin{align}
    \| Q^{\pi_1} - Q^{\pi_2} \| _{\mu^{\pi^\star}\otimes \pi^\star} ^2 &  = \EE_{s\sim \mu^{\pi^\star}, a\sim \pi^\star (\cdot | s)} \bigl \{ \bigl [ Q^{\pi_1} (s,a) - Q^{\pi_2} (s,a) \bigr ]^2 \bigr \} \notag \\
    &  = \EE_{s\sim \mu^{\pi^\star}, a\sim \pi^\star (\cdot | s)} \Bigl (    \bigl \{  \phi (s,a)^\top [ \theta^{\star}(\pi_1)  - \theta^{\star}(\pi_2) ]  \bigr \} ^2 \Bigr ) \notag \\
    & = [ \theta^{\star}(\pi_1)  - \theta^{\star}(\pi_2) ] ^\top \Sigma_{\pi^\star } [ \theta^{\star}(\pi_1)  - \theta^{\star}(\pi_2) ].
\end{align}
Then, combining  \Cref{ass:stationary} and \eqref{eq:rl_lip3}, we have 
\begin{align}
    \mu_{\phi}^2 \|  \theta^{\star}(\pi_1)  - \theta^{\star}(\pi_2) \|^2   \leq \| Q^{\pi_1} - Q^{\pi_2} \| _{\mu^{\pi^\star}\otimes \pi^\star} ^2 \leq ( 1- \gamma)^{-4} \cdot {\overline {r}^2} \cdot C_{\rho} ^2 \cdot \| \pi_1 - \pi_2 \|_{\rho^\star,1}^2,
\end{align}
which yields the second inequality in Lemma \ref{lem:rl_lip}. We conclude the proof.
		
 		% \section{Other Technical Results}

\section{Auxiliary Lemmas} The proofs for the lemmas below can be found in the online appendix \cite{Hong-TTSA-2020}.
\begin{Lemma} \label{lem:aux1} \cite[Lemma 12]{kaledin2020finite}
Let {\blue $a>0$}, $\{ \gamma_j \}_{j \geq 0}$ be a non-increasing, non-negative sequence such that $\gamma_0 < 1/a$, it holds for any $k \geq 0$ that
\beqq
\sum_{j=0}^k \gamma_j \prod_{\ell=j+1}^k (1 - \gamma_\ell a) \leq \frac{1}{a}.
\eeqq
\end{Lemma}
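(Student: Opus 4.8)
\textbf{Proof proposal for Lemma \ref{lem:aux1}.}

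The plan is to prove the bound by a telescoping argument, which is the standard trick for such ``SGD-style'' product recursions. First I would introduce the partial products $\Pi_{j,k} := \prod_{\ell=j+1}^k (1 - \gamma_\ell a)$, with the convention $\Pi_{k,k} = 1$, so that the sum in question is $S_k := \sum_{j=0}^k \gamma_j \Pi_{j,k}$. The key observation is that $a\gamma_j = \frac{1}{1} \big( 1 - (1 - a\gamma_j) \big)$, hence
\beqq
a \gamma_j \Pi_{j,k} = \Pi_{j,k} - (1 - a\gamma_j)\Pi_{j,k} = \Pi_{j,k} - \Pi_{j-1,k},
\eeqq
valid for $j \geq 1$ (and for $j=0$ we just bound $a\gamma_0 \Pi_{0,k} \leq \Pi_{0,k}$ directly, or extend the telescope by defining $\Pi_{-1,k} := \prod_{\ell=0}^k (1-\gamma_\ell a)$, which is nonnegative since $\gamma_0 < 1/a$ forces every factor into $(0,1]$). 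Summing over $j$ telescopes to $a S_k = \Pi_{k,k} - \Pi_{-1,k} = 1 - \prod_{\ell=0}^{k}(1 - \gamma_\ell a) \leq 1$, giving $S_k \leq 1/a$.

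The one point requiring a little care — and the only real obstacle — is ensuring that every factor $1 - \gamma_\ell a$ is nonnegative, so that the products are monotone and the dropped term $\Pi_{-1,k} \geq 0$ can legitimately be discarded. This is where the hypotheses $\gamma_0 < 1/a$ and $\{\gamma_j\}$ non-increasing enter: they guarantee $\gamma_\ell \leq \gamma_0 < 1/a$ for all $\ell$, hence $0 < 1 - \gamma_\ell a \leq 1$. I would state this explicitly as the first step, then carry out the telescoping identity above, and conclude. The monotonicity of $\{\gamma_j\}$ is in fact only needed for this positivity bookkeeping; the telescoping identity itself holds regardless.

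Alternatively, if one wants to avoid even defining $\Pi_{-1,k}$, one can simply write $a S_k = a\gamma_0\Pi_{0,k} + \sum_{j=1}^k (\Pi_{j,k} - \Pi_{j-1,k}) = a\gamma_0 \Pi_{0,k} + \Pi_{k,k} - \Pi_{0,k} = 1 - (1 - a\gamma_0)\Pi_{0,k} \leq 1$, using $(1-a\gamma_0)\Pi_{0,k} \geq 0$. Either route is a two-line computation once the positivity of the factors is in hand, so I expect no genuine difficulty beyond stating the hypotheses correctly.
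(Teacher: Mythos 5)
Your proof is correct: the telescoping identity $a\gamma_j \Pi_{j,k} = \Pi_{j,k} - \Pi_{j-1,k}$ together with the positivity of the factors (guaranteed by $\gamma_\ell \leq \gamma_0 < 1/a$) gives $aS_k = 1 - (1-a\gamma_0)\Pi_{0,k} \leq 1$ exactly as you claim. The paper itself does not reprove this lemma (it cites \cite[Lemma 12]{kaledin2020finite}), but the identical telescoping device is the one used in its proof of \Cref{lem:aux3}, so your argument is in the same spirit as the paper's treatment of the sibling results.
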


\begin{Lemma} \label{lem:aux2}
Fix a real number $1 < q \leq 2$.
Let {\blue $a>0$}, $\{ \gamma_j \}_{j \geq 0}$ be a non-increasing, non-negative sequence such that $\gamma_0 < 1/(2a)$. Suppose that
$\frac{\gamma_{\ell-1}}{ \gamma_{\ell} } \leq 1 + \frac{a}{2(q-1)} \gamma_\ell$.
Then, it holds for any $k \geq 0$ that
\beqq
\sum_{j=0}^k \gamma_j^q \prod_{\ell=j+1}^k (1 - \gamma_\ell a) \leq \frac{2}{a} \gamma_k^{q-1}.
\eeqq
\end{Lemma}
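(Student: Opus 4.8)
The plan is to prove the bound by induction on $k$, after rewriting the left-hand side as a one-step recursion. Set $S_k := \sum_{j=0}^k \gamma_j^q \prod_{\ell=j+1}^k (1-\gamma_\ell a)$. Since $\gamma_\ell \le \gamma_0 < 1/(2a)$, each factor $1-\gamma_\ell a$ lies in $(1/2,1]$, so $S_k \ge 0$ and, peeling off the last factor, $S_k = (1-\gamma_k a)\,S_{k-1} + \gamma_k^q$ for all $k \ge 1$, while $S_0 = \gamma_0^q$. The base case is immediate: $S_0 = \gamma_0^q \le \tfrac{2}{a}\gamma_0^{q-1}$ because $\gamma_0 \le 2/a$.

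For the inductive step, I would assume $S_{k-1} \le \tfrac{2}{a}\gamma_{k-1}^{q-1}$ and plug this into the recursion to get $S_k \le (1-\gamma_k a)\tfrac{2}{a}\gamma_{k-1}^{q-1} + \gamma_k^q$; it then remains to show this is at most $\tfrac{2}{a}\gamma_k^{q-1}$. The crux is to control $\gamma_{k-1}^{q-1}$ by $\gamma_k^{q-1}$. Here I would invoke the hypothesis $\gamma_{k-1}/\gamma_k \le 1 + \tfrac{a}{2(q-1)}\gamma_k$, raise both sides to the power $q-1 \in (0,1]$, and apply Bernoulli's inequality $(1+x)^{q-1} \le 1 + (q-1)x$ for $x \ge 0$ (equivalently, concavity of $t \mapsto t^{q-1}$), which gives $\gamma_{k-1}^{q-1} \le \gamma_k^{q-1}\bigl(1 + \tfrac{a}{2}\gamma_k\bigr)$ — the factor $q-1$ produced by the exponent exactly cancels the $q-1$ in the denominator of the step-size ratio condition.

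With this in hand the rest is routine algebra: substituting the bound and using $1 - \gamma_k a > 0$, it suffices to check $(1-\gamma_k a)(1+\tfrac{a}{2}\gamma_k) \le 1 - \tfrac{a}{2}\gamma_k$, which follows from $(1-\gamma_k a)(1+\tfrac{a}{2}\gamma_k) = 1 - \tfrac{a}{2}\gamma_k - \tfrac{a^2}{2}\gamma_k^2 \le 1 - \tfrac{a}{2}\gamma_k$. Hence $S_k \le \tfrac{2}{a}\gamma_k^{q-1}\bigl(1 - \tfrac{a}{2}\gamma_k\bigr) + \gamma_k^q = \tfrac{2}{a}\gamma_k^{q-1} - \gamma_k^q + \gamma_k^q = \tfrac{2}{a}\gamma_k^{q-1}$, closing the induction.

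I do not anticipate a genuine obstacle; this is essentially the $q>1$ analogue of \Cref{lem:aux1}. The only point needing care is selecting the right instance of Bernoulli's inequality so that the awkward constant $\tfrac{a}{2(q-1)}$ collapses to $\tfrac{a}{2}$ — which is precisely why the hypothesis is stated with that constant — together with the bookkeeping observation that $\gamma_0 < 1/(2a)$ keeps every factor $1-\gamma_\ell a$ strictly positive, legitimizing the monotonicity step $(1-\gamma_k a)\gamma_{k-1}^{q-1} \le (1-\gamma_k a)\gamma_k^{q-1}\bigl(1+\tfrac{a}{2}\gamma_k\bigr)$.
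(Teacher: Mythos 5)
Your proof is correct. The heart of it — raising the ratio condition to the power $q-1$ and using $(1+x)^{q-1}\le 1+(q-1)x$ so that $\gamma_{k-1}^{q-1}\le\gamma_k^{q-1}(1+\tfrac{a}{2}\gamma_k)$, followed by $(1-\gamma_k a)(1+\tfrac{a}{2}\gamma_k)\le 1-\tfrac{a}{2}\gamma_k$ — is exactly the key inequality in the paper's proof, which establishes $\bigl(\gamma_{\ell-1}/\gamma_\ell\bigr)^{q-1}(1-\gamma_\ell a)\le 1-\tfrac{a}{2}\gamma_\ell$. The difference is only in packaging: the paper factors $\gamma_k^{q-1}$ out of the sum, telescopes the ratios $(\gamma_{\ell-1}/\gamma_\ell)^{q-1}$ into the product, and then invokes \Cref{lem:aux1} with $a/2$ in place of $a$ to bound the resulting sum $\sum_{j=0}^k\gamma_j\prod_{\ell=j+1}^k(1-\tfrac{a}{2}\gamma_\ell)$ by $2/a$; you instead run a one-step recursion $S_k=(1-\gamma_k a)S_{k-1}+\gamma_k^q$ and close an induction. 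Your route is self-contained (it does not rely on \Cref{lem:aux1}), at the cost of being slightly less modular; both are equally valid, and all your auxiliary checks (positivity of $1-\gamma_\ell a$ from $\gamma_0<1/(2a)$, the base case $\gamma_0\le 2/a$) are in order.
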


\begin{Lemma} \label{lem:aux3}
Fix the real numbers $a, b > 0$.
Let $\{ \gamma_j \}_{j \geq 0}, \{ \rho_j \}_{j \geq 0}$ be nonincreasing, non-negative sequences such that $2 a \gamma_j \leq b \rho_j$ for all $j$, {\blue and $\rho_0 < 1/b$}. 
Then, it holds that
\beqq
\sum_{j=0}^k \gamma_j \prod_{\ell=j+1}^k (1 - \gamma_\ell a) \prod_{i=0}^j (1 - \rho_i b ) \leq \frac{1}{a} \prod_{\ell=0}^k (1 - \gamma_\ell a), \; \forall\; k\ge 0.
\eeqq
\end{Lemma}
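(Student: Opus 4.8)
The plan is to reduce the ``double product'' on the left-hand side to a single geometric product via a termwise comparison of the two decay factors, and then finish with a telescoping sum. Write $P_k := \prod_{\ell=0}^k (1-\gamma_\ell a)$ and $Q_j := \prod_{i=0}^j (1-\rho_i b)$, so that the claimed inequality reads $S_k := \sum_{j=0}^k \gamma_j (P_k/P_j) Q_j \le P_k/a$. Since $\{\rho_j\}$ is nonincreasing and $\rho_0 b < 1$, we have $0 \le \rho_j b < 1$ for every $j$; combined with $2a\gamma_j \le b\rho_j$ this also gives $0 \le \gamma_j a < 1/2$, so every factor appearing above lies in $(0,1)$ and in particular $P_j, Q_j > 0$.

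First I would establish the pointwise bound $Q_j \le P_j^2$. This follows from $(1-\gamma_j a)^2 = 1 - 2\gamma_j a + \gamma_j^2 a^2 \ge 1 - 2\gamma_j a \ge 1 - b\rho_j$, where the last step is exactly the hypothesis $2a\gamma_j \le b\rho_j$; multiplying these nonnegative inequalities over $i = 0,\dots,j$ yields $P_j^2 = \prod_{i=0}^j (1-\gamma_i a)^2 \ge \prod_{i=0}^j (1-\rho_i b) = Q_j$. Plugging $Q_j \le P_j^2$ into the definition of $S_k$ gives $S_k \le \sum_{j=0}^k \gamma_j (P_k/P_j) P_j^2 = P_k \sum_{j=0}^k \gamma_j P_j$, so it remains to prove $\sum_{j=0}^k \gamma_j P_j \le 1/a$.

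For the latter I would use the same telescoping device as in the proof of \Cref{lem:aux1}: with the convention $P_{-1} := 1$, monotonicity of $\{P_j\}$ (each factor $1-\gamma_j a$ lies in $[0,1]$) gives $a\gamma_j P_j \le a\gamma_j P_{j-1} = P_{j-1} - P_j$, and summing over $j = 0,\dots,k$ telescopes to $a\sum_{j=0}^k \gamma_j P_j \le P_{-1} - P_k = 1 - P_k \le 1$. Combining the two steps, $S_k \le P_k(1-P_k)/a \le P_k/a$, which is the claim (the case $k=0$ being covered by the same chain, reading empty products as $1$). There is no genuine obstacle here; the only thing to watch is that the sign and range conditions needed to square and multiply the factor inequalities termwise, and to use $P_j \le P_{j-1}$, are all guaranteed by $\rho_0 b < 1$ together with $2a\gamma_j \le b\rho_j$ and the monotonicity hypotheses.
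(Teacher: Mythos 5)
Your argument is correct, and every step checks out: the positivity of all factors follows from $\rho_0 b<1$ together with $2a\gamma_j\le b\rho_j$; the squaring inequality $(1-\gamma_i a)^2\ge 1-2\gamma_i a\ge 1-\rho_i b\ge 0$ multiplies up legitimately to $Q_j\le P_j^2$ because all terms are nonnegative; and the final telescoping is exactly the mechanism of \Cref{lem:aux1}. The route does differ from the paper's in one substantive place. Both proofs begin by factoring out $P_k=\prod_{\ell=0}^k(1-\gamma_\ell a)$, reducing the claim to $\sum_{j=0}^k\gamma_j\,Q_j/P_j\le 1/a$. The paper bounds the ratio termwise via $\frac{1-\rho_i b}{1-\gamma_i a}\le 1-\rho_i b/2$, so that $Q_j/P_j\le\prod_{i=0}^j(1-\rho_i b/2)$, then converts $\gamma_j\le\frac{b}{2a}\rho_j$ and telescopes in the $\rho$-sequence. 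You instead bound $Q_j/P_j\le P_j$ via $Q_j\le P_j^2$ and telescope directly in the $\gamma$-sequence. Your version is a bit more economical: it avoids introducing the auxiliary half-rate product $\prod_i(1-\rho_i b/2)$ and reuses the \Cref{lem:aux1} argument verbatim, at the cost of burying the role of the faster $\rho$-timescale (which the paper's intermediate bound keeps explicit, consistent with the surrounding two-timescale analysis). Both land on the same final bound $\frac{1}{a}P_k$, yours in the marginally sharper form $\frac{1}{a}P_k(1-P_k)$.
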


\ifonlineapp
\section{Technical Results Omitted from the Main Paper}

\subsection{Proof of \Cref{lem:aux2}}
To derive this result, we observe that 
\begin{align*}
\begin{split}
\sum_{j=0}^k \gamma_j^q \prod_{\ell=j+1}^k (1 - \gamma_\ell a) & \leq \gamma_k^{q-1} \sum_{j=0}^k \gamma_j \frac{\gamma_j^{q-1}}{\gamma_k^{q-1}} \prod_{\ell=j+1}^k (1 - \gamma_\ell a) \\
& = \gamma_k^{q-1} \sum_{j=0}^k \gamma_j \prod_{\ell=j+1}^k \left( \frac{\gamma_{\ell-1}}{\gamma_{\ell}} \right)^{q-1}  (1 - \gamma_\ell a).
\end{split}
\end{align*}
Furthermore, from the conditions on $\gamma_\ell$,
\begin{align*}
\begin{split}
\left( \frac{\gamma_{\ell-1}}{\gamma_{\ell}} \right)^{q-1}  (1 - \gamma_\ell a) \leq \big( 1 + \frac{a}{2(q-1)} \gamma_\ell \big)^{q-1} (1 - \gamma_\ell a) & \leq 1 - \frac{a}{2} \gamma_\ell.
\end{split}
\end{align*}
Therefore,
\begin{align*}
\begin{split}
\sum_{j=0}^k \gamma_j^q \prod_{\ell=j+1}^k (1 - \gamma_\ell a) & \leq \gamma_k^{q-1} \sum_{j=0}^k \gamma_j \prod_{\ell=j+1}^k (1 - \frac{a}{2} \gamma_\ell ) \leq \frac{2}{a} \gamma_k^{q-1}.
\end{split}
\end{align*}
This concludes the proof.

\subsection{Proof of \Cref{lem:aux3}}
First of all, the condition $2 a \gamma_j \leq b \rho_j$ implies
\beqq \notag 
\frac{1-\rho_i b}{1 - \gamma_i a} \leq 1 - \rho_i b / 2,~\forall~i \geq 0.
\eeqq 
As such, we observe 
$\prod_{i=0}^j \frac{ 1 - \rho_i b }{ 1 - \gamma_i a } \leq \prod_{i=0}^j (1 - \rho_i b/2)$ and subsequently, {\small
\begin{align*}
\sum_{j=0}^k \gamma_j \prod_{\ell=j+1}^k (1 - \gamma_\ell a) \prod_{i=0}^j (1 - \rho_i b )
% =\Big[\prod_{\ell=0}^k (1 - \gamma_\ell a) \Big] \sum_{j=0}^k \gamma_j \prod_{i=0}^j \frac{1 - \rho_i b}{1 - \gamma_i a} 
\leq \Big[ \prod_{\ell=0}^k (1 - \gamma_\ell a)\Big] \sum_{j=0}^k \gamma_j \prod_{i=0}^j (1 - \rho_i b / 2).
\end{align*}}Furthermore, for any $j=0,...,k$, it holds
\beqq
\rho_j \prod_{i=0}^{j-1} (1 - \rho_i b / 2) = \frac{2}{b} \Big[ \prod_{i=0}^{j-1} (1 - \rho_i b / 2) - \prod_{i=0}^{j} (1 - \rho_i b / 2) \Big],
\eeqq
where we have taken the convention $\prod_{i=0}^{-1} (1-\rho_i b/2) = 1$.
We obtain that{\small
\begin{align*}
\begin{split}
\sum_{j=0}^k \gamma_j \prod_{i=0}^j (1 - \rho_i b / 2) & \leq \frac{b}{2a} \sum_{j=0}^k \rho_j \prod_{i=0}^{j-1} (1 - \rho_i b / 2) \\
& = \frac{1}{a} \sum_{j=0}^k \Big[ \prod_{i=0}^{j-1} (1 - \rho_i b / 2) - \prod_{i=0}^{j} (1 - \rho_i b / 2) \Big] \leq \frac{1}{a},
\end{split}
\end{align*}}where the last inequality follows from the bound $1 - \prod_{i=0}^k (1 - \rho_i \mu_g b/2 ) \leq 1$. 
Combining with the above inequality yields the desired results.

% \subsection{Computing Biased Samples of $\Bgrd f(x^k,y^{k+1})$} 
\subsection{Proof of \Cref{lem:hessinv_main}}
\label{sec:bias}

\begin{proof}
Since the samples are drawn independently, the expected value of $h_f^k$ is
\beq \label{eq:exp_hfk} \textstyle
\EE[ h_f^k ] = \grd_x f(x,y) - \grd_{xy}^2 g(x,y) \EE\big [ \frac{\tmax \, \mu_g}{L_g (\mu_g^2 + \sigma_{gxy}^2)} \prod_{i=1}^{\sf p} \big (  I - \frac{\mu_g}{L_g (\mu_g^2 + \sigma_{gxy}^2)} \grd_{yy}^2 g(x,y ; \xi_i^{(2)} ) \big)   \big ]   \grd_y f(x,y).
\eeq
We have 
\begin{align*}
\begin{split}
& \| \Bgrd f(x,y) - \EE[ h_f^k ] \| \\
& \textstyle = \left\| \grd_{xy}^2 g(x,y) \big\{  \EE\big [ \frac{\tmax \mu_g}{L_g (\mu_g^2 + \sigma_{gxy}^2)} \prod_{i=1}^{\sf p} \big(  I - \frac{\mu_g}{L_g (\mu_g^2 + \sigma_{gxy}^2)} \grd_{yy}^2 g(x,y ; \xi_i^{(2)} )  \big)  \big ]   - \bigl [ \grd_{yy} g(x,y) \bigr ] ^{-1} \big\} \grd_y f(x,y) \right\| \\
& \textstyle \leq C_{gxy} C_{fy} \left\| \EE\big [ \frac{\tmax \mu_g}{L_g (\mu_g^2 + \sigma_{gxy}^2)} \prod_{i=1}^{\sf p} \big(  I - \frac{\mu_g}{L_g (\mu_g^2 + \sigma_{gxy}^2)} \grd_{yy}^2 g(x,y ; \xi_i^{(2)} )  \big) \big ]   - \bigl [ \grd_{yy} g(x,y) \bigr ] ^{-1} \right\|,
\end{split}
\end{align*}
where the last inequality follows from \Cref{ass:f}-3 and \Cref{ass:g}-5.

Applying \cite[Lemma 3.2]{ghadimi2018approximation}, the latter norm can be bounded by  $\frac{1}{\mu_g}\big(1 - \frac{\mu_g^2}{L_g (\mu_g^2+\sigma_{gxy}^2)} \big)^{\tmax}$. This concludes the proof for the first part of the lemma.

It remains to bound the variance of $h_f^k$. 
We first let 
\[
  H_{yy} = \frac{\tmax \, \mu_g}{L_g (\mu_g^2 + \sigma_{gxy}^2)} \prod_{i=1}^{\sf p} \big(  I - \frac{\mu_g}{L_g (\mu_g^2 + \sigma_{gxy}^2)} \grd_{yy}^2 g(x,y ; \xi_i^{(2)} )  \big).
\] 
To estimate the variance of $h_f^k$, using \eqref{eq:exp_hfk}, we observe that 
\begin{align*}
\begin{split}
& \EE[ \| h_f^k - \EE[ h_f^k ] \|^2 ] = \EE[ \| \grd_x f(x,y;\xi^{(1)} ) - \grd_x f(x,y) \|^2 ] \\
& \quad \quad + \EE\Big[ \Big\| \grd_{xy}^2 g(x,y; \xi_0^{(2)}) H_{yy} \grd_y f(x,y;\xi^{(1)}) - \grd_{xy}^2 g(x,y) \EE \big[ H_{yy} \big] \grd_y f(x,y) \Big\|^2 \Big] . 
\end{split}
\end{align*}
The first term on the right hand side can be bounded by $\sigma_{fx}^2$. Furthermore
\begin{align*}
\begin{split}
& \grd_{xy}^2 g(x,y; \xi_0^{(2)}) H_{yy} \grd_y f(x,y;\xi^{(1)}) - \grd_{xy}^2 g(x,y) \EE \big[ H_{yy} \big] \grd_y f(x,y) \\
& = \bigl \{ \grd_{xy}^2 g(x,y; \xi_0^{(2)}) - \grd_{xy}^2 g(x,y) \big \} H_{yy} \grd_y f(x,y;\xi^{(1)}) \\
& \quad + \grd_{xy}^2 g(x,y) \{ H_{yy} - \EE[ H_{yy} ] \} \grd_y f(x,y; \xi^{(1)}) \\
& \quad + \grd_{xy}^2 g(x,y) \EE[ H_{yy} ] \bigl \{ \grd_y f(x,y; \xi^{(1)}) - \grd_y f(x,y) \bigr \}. 
\end{split}
\end{align*}
We also observe
\beqq \notag
\EE[ \| \grd_y f(x,y;\xi^{(1)}) \|^2 ] = \EE[ \| \grd_y f(x,y;\xi^{(1)}) - \grd_y f(x,y) \|^2 ] + \EE[ \| \grd_y f(x,y) \|^2 ] \leq \sigma_{fy}^2 + C_y^2 .
\eeqq
Using $(a+b+c)^2 \leq 3(a^2 + b^2 +c ^2)$ and the Cauchy-Schwarz's inequality, we have 
\begin{align*}
\begin{split}
& \EE\Big[ \Big\| \grd_{xy}^2 g(x,y; \xi_0^{(2)}) H_{yy} \grd_y f(x,y;\xi^{(1)}) - \grd_{xy}^2 g(x,y) \EE \big[ H_{yy} \big] \grd_y f(x,y) \Big\|^2 \Big] \\
& \leq 3 \Big\{ (\sigma_{fy}^2 + C_y^2) \{ \sigma_{gxy}^2   \EE[ \| H_{yy} \|^2 ] + C_{gxy}^2    \EE[ \| H_{yy} - \EE[ H_{yy} ] \|^2 ] \} + \sigma_{fy}^2 C_{gxy}^2 \cdot  \| \EE[H_{yy}] \|^2 \Big\} . 
\end{split}
\end{align*}
Next, we observe that 
\beq 
\EE[ \| H_{yy} \|^2 ] = \frac{\mu_g^2}{L_g^2 (\mu_g^2 + \sigma_{gxy}^2)^2} \sum_{p=0}^{\tmax-1} \EE\left[ \left\| \prod_{i=1}^p \big (  I - \frac{\mu_g}{L_g (\mu_g^2 + \sigma_{gxy}^2)} \grd_{yy}^2 g(x,y ; \xi_i^{(2)} ) \big) \right\|^2 \right] 
\eeq 
Observe that the product of random matrices satisfies the conditions in \Cref{lem:product} with $\mu \equiv \frac{ \mu_g^2 }{L_g (\mu_g^2+\sigma_{gxy}^2)}$, $\sigma^2 \equiv \frac{\mu_g^2 \sigma_{gxy}^2}{L_g^2(\mu_g^2+\sigma_{gxy}^2)^2}$. Under the condition $L_g \geq 1$, it can be seen that 
\[
(1-\mu)^2 + \sigma^2 \leq 1 - {\mu_g^2} / ({L_g ( \mu_g^2 + \sigma_{gxy}^2 )}) < 1.
\]
Applying \Cref{lem:product} shows that  
\[
  \EE\left[ \left\| \prod_{i=1}^p \big (  I - \frac{\mu_g}{L_g (\mu_g^2 + \sigma_{gxy}^2)} \grd_{yy}^2 g(x,y ; \xi_i^{(2)} ) \big) \right\|^2 \right] \leq d_1 \left( 1 - \frac{\mu_g^2}{L_g ( \mu_g^2 + \sigma_{gxy}^2 )} \right)^p.
\]
Subsequently, 
\[
  \EE[ \| H_{yy} \|^2 ] \leq \frac{d_1}{L_g (\mu_g^2 + \sigma_{gxy}^2)} .
\]
% a slight modification of the proof of Lemma 3.2 in  \cite{ghadimi2018approximation} gives
% \beqq
% \| \EE[ H_{yy} ] \|^2 \leq (\EE[ \| H_{yy} \| ])^2 \leq \mu_g^{-2}.
% \eeqq
\footnote{Notice that a slight modification of the proof of \cite[Lemma 3.2]{ghadimi2018approximation} yields $(\EE[ \| H_{yy} \| ])^2 \leq \mu_g^{-2}$. However, the latter lemma requires $I - (1/L_g) \grd_{yy}^2 g(x,y, \xi_i^{(2)})$ to be a PSD matrix almost surely, which is not required in our analysis.}Furthermore, it is easy to derive that $\| \EE[ H_yy ] \| \leq 1 / \mu_g$. 
Together, the above gives the following estimate on the variance:
\beqq
\EE[ \| h_f^k - \EE[ h_f^k ] \|^2 ] \leq \sigma_{fx}^2 + \Big\{  (\sigma_{fy}^2 + C_y^2)  \{ \sigma_{gxy}^2 + 2 C_{gxy}^2 \} + \sigma_{fy}^2 C_{gxy}^2 \Big\} \max\big\{ \frac{3}{\mu_g^2}, \frac{3d_1}{L_g ( \mu_g^2 + \sigma_{gxy}^2)} \big\}.
\eeqq
This concludes the proof for the second part of the lemma. 
\end{proof}
% \vspace{-0.2cm}

We observe the following lemma on the product of (possibly non-PSD) matrices, which is inspired by \cite{durmus2021tight, huang2021matrix}:
\begin{Lemma}\label{lem:product}
Let $Z_i, i=0,1,...$ be a sequence of random matrices defined recursively as $Z_i = Y_i Z_{i-1}$, $i \geq 1$, with $Z_0 = I \in \RR^{d \times d}$, and $Y_i, i=0,1,...$ are independent, symmetric, random matrices satisfying $\| \EE[Y_i] \| \leq 1 - \mu$ and $\EE[ \| Y_i - \EE[Y_i] \|_2^2 ] \leq \sigma^2$. If $(1-\mu)^2+\sigma^2 < 1$, then for any $t \geq 0$, it holds
\beq
\EE[ \| Z_t \|^2 ] \leq \EE[ \| Z_t \|_2^2 ] \leq d \left( (1-\mu)^2 + \sigma^2 \right)^t ,
\eeq
where $\| X \|_2$ denotes the Schatten-2 norm of the matrix $X$.
\end{Lemma}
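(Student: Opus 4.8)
The plan is to work directly with the Schatten-$2$ (Frobenius) norm $\|X\|_2^2 = {\rm tr}(X^\top X)$, since it interacts cleanly with matrix products and since $\|X\| \le \|X\|_2$ for every $X$, so the second inequality in the claim is free once the Schatten-$2$ bound is in hand. Let $\mathcal{F}_t = \sigma\{Y_1,\dots,Y_t\}$ be the natural filtration, so $Z_{t-1}$ is $\mathcal{F}_{t-1}$-measurable while $Y_t$ is independent of $\mathcal{F}_{t-1}$. From $Z_t = Y_t Z_{t-1}$ and the symmetry of $Y_t$ I would first compute
\[
\EE\big[\|Z_t\|_2^2 \,\big|\, \mathcal{F}_{t-1}\big] = \EE\big[{\rm tr}(Z_{t-1}^\top Y_t^2 Z_{t-1})\,\big|\,\mathcal{F}_{t-1}\big] = {\rm tr}\big(Z_{t-1}^\top \, \EE[Y_t^2]\, Z_{t-1}\big),
\]
using independence to pull $\EE[Y_t^2]$ through the conditional expectation.

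The heart of the argument is to bound $\EE[Y_t^2]$ in the Loewner order. Writing $Y_t = \EE[Y_t] + (Y_t-\EE[Y_t])$ and expanding the square, the cross terms vanish by mean-zero, leaving $\EE[Y_t^2] = (\EE[Y_t])^2 + \EE[(Y_t-\EE[Y_t])^2]$, a sum of two PSD matrices. For the first, $(\EE[Y_t])^2 \preceq \|\EE[Y_t]\|^2 I \preceq (1-\mu)^2 I$. For the second, symmetry of $Y_t-\EE[Y_t]$ gives $\|Y_t-\EE[Y_t]\|_2^2 = {\rm tr}\big((Y_t-\EE[Y_t])^2\big)$, so $\EE[(Y_t-\EE[Y_t])^2]$ is a PSD matrix of trace at most $\sigma^2$, hence its largest eigenvalue is at most $\sigma^2$ and $\EE[(Y_t-\EE[Y_t])^2] \preceq \sigma^2 I$. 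Combining yields $\EE[Y_t^2] \preceq q\,I$ with $q := (1-\mu)^2 + \sigma^2$. Substituting into the identity above and using ${\rm tr}(Z^\top A Z) \le \lambda_{\max}(A)\,{\rm tr}(Z^\top Z)$ for symmetric $A$ gives
\[
\EE\big[\|Z_t\|_2^2 \,\big|\, \mathcal{F}_{t-1}\big] \le q\, \|Z_{t-1}\|_2^2 .
\]

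Taking total expectations produces the recursion $\EE[\|Z_t\|_2^2] \le q\,\EE[\|Z_{t-1}\|_2^2]$; iterating down to $\|Z_0\|_2^2 = \|I\|_2^2 = d$ gives $\EE[\|Z_t\|_2^2] \le d\, q^t$, which is the stated bound since $q < 1$ by hypothesis, and $\EE[\|Z_t\|^2] \le \EE[\|Z_t\|_2^2]$ closes the chain. I do not anticipate a genuine obstacle; the only point needing care — and the reason the hypotheses are phrased via $\|\EE[Y_i]\|$ on the bias and $\EE[\|Y_i-\EE[Y_i]\|_2^2]$ on the fluctuation — is that the $Y_i$ are not assumed PSD, so one cannot track eigenvalues of $Z_t$ directly. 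Passing to the trace/Frobenius formulation sidesteps this: after conditioning only the second moment $\EE[Y_t^2]$ survives, and the bias–variance split lets operator-norm control of $(\EE[Y_t])^2$ combine with trace (i.e.\ Schatten-$2$) control of the fluctuation into the single contraction factor $q$.
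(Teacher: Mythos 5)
Your proof is correct, but it takes a genuinely different route from the paper's. The paper decomposes $Z_t = A_t + B_t$ with $A_t = (Y_t - \EE[Y_t])Z_{t-1}$ and $B_t = \EE[Y_t]Z_{t-1}$, observes $\EE[A_t \mid B_t] = 0$, and then imports two results from Huang et al.\ (their Proposition 4.3 for the splitting $\EE[\|Z_t\|_2^2] \leq \EE[\|A_t\|_2^2] + \EE[\|B_t\|_2^2]$, and their inequality (4.1) to bound the $B_t$ term), plus a matrix H\"older inequality for the $A_t$ term. You instead condition on $\mathcal{F}_{t-1}$, reduce everything to the single trace identity $\EE[\|Z_t\|_2^2 \mid \mathcal{F}_{t-1}] = {\rm tr}\bigl(Z_{t-1}^\top \EE[Y_t^2] Z_{t-1}\bigr)$, and establish the Loewner bound $\EE[Y_t^2] \preceq \bigl((1-\mu)^2 + \sigma^2\bigr) I$ via the bias--variance identity $\EE[Y_t^2] = (\EE[Y_t])^2 + \EE[(Y_t-\EE[Y_t])^2]$, bounding the fluctuation part by its trace. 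Each step checks out: the cross terms vanish by mean-zero and independence, $(\EE[Y_t])^2 \preceq (1-\mu)^2 I$ by symmetry of the mean, and a PSD matrix with trace at most $\sigma^2$ indeed satisfies $\preceq \sigma^2 I$. Your argument is self-contained and elementary --- in effect you re-derive, for this symmetric left-multiplication setting, exactly the orthogonal splitting that the paper cites as an external proposition --- whereas the paper's citation-based route is shorter on the page and rests on machinery that applies more generally (e.g.\ to decompositions where the conditional-mean-zero structure is less transparent). Both arrive at the identical contraction factor $q = (1-\mu)^2 + \sigma^2$ and the same recursion, so the final bound $d\,q^t$ is the same.
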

\begin{proof} 
We note from the norm equivalence between spectral norm and Schatten-2 norm which yields $\| Z_t \| \leq \| Z_t \|_2$ and thus $\EE[ \|Z_t\|^2 ] \leq \EE[ \| Z_t \|_2^2 ]$. 
For any $i \geq 1$, we observe that 
\[
  Z_i = \underbrace{(Y_i - \EE[Y_i]) Z_{i-1} }_{= A_i} + \underbrace{ \EE[Y_i] Z_{i-1} }_{= B_i}.
\] 
Notice that as $\EE[ A_i | B_i ] = 0$, applying \cite[Proposition 4.3]{huang2021matrix} yields
\beq \label{eq:prod_substitute}
  \EE[ \| Z_t \|_2^2 ] \leq \EE[ \| A_t \|_2^2 ] + \EE[ \| B_t \|_2^2 ].
\eeq
Furthermore, using the fact that $Y_i$s are independent random matrices and H\"{o}lder's inequality for matrices, we observe that
\[ 
  \EE[ \| A_t \|_2^2 ] \leq \EE[ \| Y_t - \EE[Y_t] \|_2^2 \| Z_{t-1} \|_2^2 ] = \EE[ \| Y_t - \EE[Y_t] \|_2^2 ] \EE[ \| Z_{t-1} \|_2^2 ] \leq \sigma^2 \EE[ \| Z_{t-1} \|_2^2 ]
\]
and using \cite[(4.1)]{huang2021matrix},
\[
  \EE[ \| B_t \|_2^2 ] \leq \| \EE[Y_t] \|^2 \EE[ \| Z_{t-1} \|_2^2 ] \leq (1- \mu)^2 \EE[ \| Z_{t-1} \|_2^2 ]
\]
Substituting the above into \eqref{eq:prod_substitute} yields 
$\EE[ \| Z_t \|_2^2 ] \leq ( (1-\mu)^2 + \sigma^2 ) \EE[ \| Z_{t-1} \|_2^2 ]$. Repeating the same arguments for $t$ times and using $\| I \|_2^2 = d$ yields the upper bound. 
\end{proof}

% Finally, let us discuss the consequence of \Cref{lem:hessinv} on the sample complexity of the {\sf TTSA} algorithm under the bias assumptions in \Cref{th:sc:uc}, \ref{th:wc:c}, \Cref{cor:c:c}. 
% In particular, \eqref{eq:bias_hfk} shows that 
% \beqq
% \EE[ h_f^k ] = \Bgrd f( x^k,y^{k+1} ) + B_k \quad \text{with} \quad \| B_k \| \leq b_k = {\cal O}\big( (1 - \mu_g / L_g)^t \big).
% \eeqq
% Recall that the theorems require $b_k \leq {\rm c}_b k^{-a}$ for some $a>0$. To satisfy this, one only needs $t = \Theta( \log k )$. The  $k$-th iteration of {\sf TTSA} requires $1 + \Theta( \log k )$ queries of stochastic oracles. 
\fi

		% \bibliographystyle{abbrvnat}
		% \bibliography{ref}
		
		\ifonlineapp
		{
		\linespread{0.9}\small 
		\bibliographystyle{plain}
		\bibliography{ref-bi}
		}
		\else
		{
		\linespread{0.9}\small 
		\bibliographystyle{ims}
		\bibliography{ref-bi}
		}
		\fi

		%\linespread{1.1}
		%\normalsize
		
		%-----------------------------------------------------------------------------
		%\vspace{0.4cm}
		
	\end{document}